\newtheorem{thm}{Theorem}[section]
\newtheorem{cor}[thm]{Corollary}
\newtheorem{prop}[thm]{Proposition}
\newtheorem{example}[thm]{Example}
\newtheorem*{defn}{Definition}
\newtheorem{obs}[thm]{Observation}
\newtheorem{question}{Question}[]
\newcommand{\hc}{\hat{c}}
\newcommand{\squarespace}{\,\square\,}
\title{Closed Neighborhood Balanced Coloring of Graphs}
\author[1]{K.~L.~Collins}
\author[2]{M.~Bowie}
\author[3]{N.~B.~Fox}
\author[4]{B.~Freyberg}
\author[5]{J.~Hook}
\author[6]{A.~M.~Marr} 
\author[7]{C.~McBee}
\author[8]{A.~Semani\v{c}ov\'{a}-Fe\v{n}ov\v{c}\'{i}kov\'a\footnote{Slovak Research and Development Agency under the contract No.~APVV-23-0191 and by VEGA 1/0243/23.}}
\author[9]{A.~Sinko}
\author[10]{A.~N.~Trenk}
\affil[1]{\small Dept. of Math. \& Comp. Sci., Wesleyan University, Middletown, CT, USA}
\affil[2]{Dept. of Math., University of North Alabama, Florence, AL, USA}
\affil[3]{Dept. of Math. \& Stats., Austin Peay State University, Clarksville, TN, USA}
\affil[4]{Combinatorica NPO, Opava, CZE}
\affil[5]{Dept. of Math. \& Comp. Sci., Mount St.\ Mary's University, Emmitsburg, MD, USA}
\affil[6]{Dept. of Math. \& Comp. Sci., Southwestern University, Georgetown, TX, USA}
\affil[7]{Dept. of Math. \& Comp. Sci., Providence College, Providence, RI, USA}
\affil[8]{Dept. of Applied Mathematics \& Informatics, Technical University of Ko\v{s}ice, SVK}
\affil[9]{Dept. of Math., Coll. of Saint Benedict \& Saint John's Univ., St. Joseph, MN, USA}
\affil[10]{Dept. of Mathematics, Wellesley College, Wellesley, MA, USA}
\date{\today}
\begin{document}

\maketitle

\begin{abstract}
A coloring of the vertex set of a graph using the colors red and blue is a closed neighborhood balanced coloring if for each vertex there are an equal number of red and blue vertices in its closed neighborhood.  A graph with such a coloring is called a CNBC graph.  Freyberg and Marr \cite{FM24} studied the related class of NBC graphs where closed neighborhood is replaced by open neighborhood.  We prove results about CNBC graphs and NBC graphs.
 We show that the class of CNBC graphs is not hereditary, that the sizes of the color classes can be arbitrarily different, and that if the sizes of the color classes are equal, then a graph is a CNBC graph if and only if its complement is an NBC graph. When the sizes of the color classes are equal, we show that the join of two CNBC graphs is a CNBC graph, and the lexicographic product of a CNBC graph with any graph is a CNBC graph.
 We prove that the Cartesian product of any CNBC graph and any NBC graph is a CNBC graph, and characterize when a hypercube is an NBC graph or a CNBC graph, but show that the product of two CNBC graphs need not be an NBC graph. We show that the strong product of a CNBC graph with any graph is a CNBC graph. We construct infinite families of circulants that are CNBC graphs, and give characterizations of CNBC trees, generalized Petersen graphs, cubic circulants and quintic circulants when $n\equiv 2 \pmod 4$.
\end{abstract}


\noindent {\small \textbf{\textit{Mathematics subject codes}} 05C15, 05C78} 

\noindent {\small \textbf{\textit{Keywords}} neighborhood balanced coloring, graph labeling, neighborhood partitions}

\section{Introduction}
We introduce closed neighborhood balanced colorings of graphs, which are colorings of the vertex set of a graph using the colors red and blue such that for each vertex there are an equal number of red and blue vertices in its closed neighborhood.  A graph with such a coloring is called a \emph{CNBC} graph.  Note that a closed neighborhood of a vertex $v$ in a simple graph includes vertex $v$, whereas an open neighborhood of $v$ does not include $v$ itself.  Freyberg and Marr \cite{FM24} studied the related class of NBC graphs where closed neighborhood is replaced by open neighborhood.  

Formally, let $G=(V,E)$ be a simple graph and $v \in V$. The set $N(v)=\{u:uv \in E\}$ is the open neighborhood of $v$ in $G$, and the set $N[v]=N(v) \cup \{v\}$ is the closed neighborhood of $v$ in $G$.  An $(R,B)$-coloring of $G$ is a partition of $V$ as $R \cup B$  where $R$ is the set of red vertices and $B$ is the set of blue vertices.  A \emph{closed neighborhood balanced coloring} (CNB-coloring) of $G$ is an $(R,B)$-coloring such that for each vertex $v \in V$, the number of red vertices in $N[v]$ is equal to the number of blue vertices in $N[v]$. A \emph{neighborhood balanced coloring} (NB-coloring) 
of $G$ is an $(R,B)$-coloring such that for each vertex $v \in V(G)$, the number of red vertices in $N(v)$ is equal to the number of blue vertices in $N(v)$. If $G$ admits a CNB-coloring (respectively, NB-coloring), then $G$ is a CNBC graph (respectively, NBC graph), and we say that each closed neighborhood (respectively, open neighborhood) is balanced.  

Other authors have studied related graph coloring questions. In this paper, we examine neighborhood balanced colorings of graphs, using only two colors, although the definition generalizes to any number of colors.    Neighborhood balanced graphs with three colors were studied in \cite{MS24}.  Several authors have studied proper vertex colorings of graphs with additional restrictions on neighborhoods. In \cite{MMOS24}, the authors study proper vertex colorings of graphs where for each $v\in V$, the set of colors of $N[v]$ are consecutive integers; these are called proper interval vertex colorings. In \cite{PS22}, the authors consider proper vertex colorings such that for each $v\in V$, there is a color that occurs in $N(v)$ an odd number of times.  

Both CNBC graphs and NBC graphs can model situations where a balance of perspectives or skills is desired.  For example, a graph can represent a population of people in a workplace where there is an edge between people who interact regularly.  Each person might receive training in one of two types of skills, represented by coloring that vertex red or blue.   If this coloring is balanced, then each person has access to balanced perspectives and skill sets among their close colleagues.  

Another example where it is desirable to assign vertices as red or blue in a balanced way appears in graph mining \cite{DMWCL23}.  Graph mining is the set of techniques used to analyze the properties of graphs that arise in applications, predict how their structure is related to the application, and develop models to generate graphs that exhibit many features of the graphs of interest.  Data from a variety of applications including social networks, transportation networks, and bioinformatics are well represented using graphs.  The red and blue vertices in these graphs represent the different demographic groups being considered.  Fair graph partition is a technique used in graph mining to balance the number of red and blue vertices in different contexts.  For example, in \cite{LLL24}, the authors consider the number of adjacencies between two red, two blue, or one red and one blue vertex.

In the remainder of this section we present some fundamental observations, additional notation, and an outline of the rest of the paper. 
The following observations follow immediately from our definitions. 

\begin{obs}\label{obs:odddegree}
If $G$ is a CNBC graph, then every vertex has odd degree and  therefore $G$ has an even number of vertices.
If $G$ is an NBC graph, then every vertex has even degree.
 \end{obs}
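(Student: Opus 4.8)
The plan is to observe that any finite set partitioned into red and blue parts of equal size must have even cardinality, and then apply this to the relevant neighborhoods. For the CNBC case, fix a CNB-coloring of $G$ and let $v \in V$. By definition, $N[v]$ contains equally many red and blue vertices, so $|N[v]|$ is even. Since $|N[v]| = \deg(v) + 1$ in a simple graph, it follows that $\deg(v)$ is odd. Because this holds for every vertex, $G$ has all odd degrees, and the handshake lemma (the number of odd-degree vertices in any graph is even) forces $|V|$ to be even.

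For the NBC case the argument is the same but uses the open neighborhood: fix an NB-coloring and let $v \in V$. Then $N(v)$ has equally many red and blue vertices, so $|N(v)| = \deg(v)$ is even, i.e.\ every vertex has even degree.

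I do not anticipate any real obstacle here; the statement is an immediate consequence of the definitions together with the elementary parity fact about two-element partitions and the handshake lemma. The only point worth stating carefully is that $|N[v]| = \deg(v)+1$ relies on $G$ being simple (so $v \notin N(v)$), which is part of the standing hypothesis, and that the conclusion $|V|$ even genuinely needs the handshake lemma rather than being a restatement of ``all degrees odd.''
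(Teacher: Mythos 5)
Your proof is correct and is exactly the argument the paper has in mind; the paper simply states this as an observation that ``follows immediately from our definitions,'' and your parity argument ($|N[v]|=\deg(v)+1$ even, handshake lemma for $|V|$ even, $|N(v)|=\deg(v)$ even) is the standard way to make that immediacy explicit.
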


\noindent For instance, the cycle $C_n$ is never a CNBC graph by Observation~\ref{obs:odddegree}, and it is an NBC graph if and only if $n \equiv 0 \pmod  4$, see \cite{FM24}.  The complete graph $K_n$ is a CNBC graph if and only if $n$ is even, since the degree of each vertex must be odd, and we can color half of the vertices red and half blue. 

\begin{obs}\label{neighborhoods} Let $G$ be a graph and $u,v\in V(G)$. 
If $N(u)=N(v)$, then $u$ and $v$ are the same color in any CNB-coloring of $G$. 
If $N[u]=N[v]$, then $u$ and $v$ are the same color in any NB-coloring of $G$.
\end{obs}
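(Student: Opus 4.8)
The plan is to prove the two statements by the same short argument, the key point being that in each case the two neighborhoods under comparison differ only by interchanging $u$ and $v$.

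First I would treat the CNB-coloring statement. Suppose $N(u)=N(v)$. Since $G$ is simple it has no loops, so $u\notin N(u)=N(v)$ and $v\notin N(v)=N(u)$; in particular $uv\notin E$. Hence $N[u]=N(u)\cup\{u\}$ and $N[v]=N(v)\cup\{v\}=N(u)\cup\{v\}$, and both unions are disjoint. Fix a CNB-coloring and let $r$ and $b$ be the numbers of red and blue vertices in $N(u)$. Writing $\mathbf{1}[P]$ for $1$ if $P$ holds and $0$ otherwise, the balance condition at $u$ reads $r+\mathbf{1}[u\in R]=b+\mathbf{1}[u\in B]$, and the balance condition at $v$ reads $r+\mathbf{1}[v\in R]=b+\mathbf{1}[v\in B]$. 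Subtracting these gives $\mathbf{1}[u\in R]-\mathbf{1}[u\in B]=\mathbf{1}[v\in R]-\mathbf{1}[v\in B]$, and since each side equals $+1$ for a red vertex and $-1$ for a blue vertex, $u$ and $v$ receive the same color.

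For the NB-coloring statement, suppose $N[u]=N[v]$; we may assume $u\neq v$, and then $v\in N[v]=N[u]$ forces $uv\in E$. Setting $S=N[u]\setminus\{u,v\}$, we get $N(u)=N[u]\setminus\{u\}=S\cup\{v\}$ and $N(v)=N[v]\setminus\{v\}=S\cup\{u\}$, again disjoint unions. Fixing an NB-coloring, letting $r,b$ count the red and blue vertices of $S$, and carrying out the identical subtraction of the balance equations at $u$ and at $v$ yields that $u$ and $v$ share a color. I do not expect a genuine obstacle: the only thing requiring care is the loopless bookkeeping that makes the displayed unions disjoint and exhibits the two neighborhoods as differing only by the transposition of $u$ and $v$, after which cancelling the common terms $r$ and $b$ is immediate.
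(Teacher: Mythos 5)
Your proof is correct, and it is exactly the immediate verification the paper has in mind: the paper states this as an observation "following immediately from the definitions" and gives no explicit proof, while you simply spell out that the two relevant (closed, resp.\ open) neighborhoods differ only by swapping $u$ for $v$, so the two balance equations force $u$ and $v$ to have the same color. Nothing further is needed.
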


\noindent For example, the complete bipartite graph $K_{m,n}$ with $m$-set $M$ and $n$-set $N$ is a CNBC graph if and only if $m=n=1$, since every  vertex in $M$ has the same open neighborhood, and every vertex in $N$ has the same open neighborhood.

We study CNBC and NBC graphs by analyzing graph families and several graph products, and different colorings. In addition, we find relationships between neighborhood balanced colorings and closed neighborhood balanced colorings. 
Throughout the paper, we let $G=(V,E)$ be a simple graph, with vertex set $V$ and edge set $E$. Let $\overline{G}$ be the complement of $G$. Also, for disjoint graphs $G$ and $H$, let $G+H$ be the graph with vertex set $V(G)\cup V(H)$ and edge set $E(G)\cup E(H)$.  For a set $S$, we use the notation $|S|$ for the cardinality of $S$. Our general graph theory notation follows \cite{West}.

The rest of the paper is organized as follows.  In Section~\ref{hered-etc}   we prove that every graph is an induced subgraph of an NBC graph and of a CNBC graph.   Using additions of subgraphs with four vertices, we construct the unique six vertex CNBC tree,   and show that there exist CNBC graphs in which   the sizes of the color classes may differ by an arbitrary amount.  We also show that   if the sizes of the color classes are equal, then a graph is a CNBC graph if and only if its complement is an NBC graph. The latter result yields  corollaries about the joins of CNBC graphs and the lexicographic
product of any graph with a CNBC graph. In Section~\ref{sect:count}, we provide results about the number of vertices of each color in CNBC graphs  and NBC graphs, and also about the number of edges whose endpoints have specified colors. The class of  CNBC trees is characterized in Section~\ref{trees}.
In Section~\ref{sect:regular}, we provide counting results for regular graphs, and apply them to  particular families of regular graphs including cubic graphs, circulants, and generalized Petersen graphs.   We construct infinite families of NBC and CNBC circulants, partially characterize quintic CNBC circulants, and characterize   cubic circulants and generalized Petersen graphs that are CNBC graphs. In Section~\ref{sub:cartesian-strong}, we consider Cartesian products and strong products and show that  that the $n$-dimensional hypercube is an NBC graph when $n$ is even and a CNBC graph when $n$ is odd. We prove that the strong product of a CNBC graph and any graph is a CNBC graph and that the Cartesian product of a CNBC graph with an NBC graph is a CNBC graph. We conclude in Section~\ref{openq} with some open questions and acknowledgments.
 
\section{Global properties of CNBC and NBC graphs} \label{hered-etc}

In this section, we prove  that the properties of being a CNBC graph or an NBC graph are not hereditary properties. In addition, we show that the number of red and blue vertices need not be equal in a CNBC graph or an NBC graph, and we consider complements, joins and the lexicographic product. We provide several examples. 

\subsection{The classes of CNBC and NBC graphs are not hereditary} \label{sub:not-here} 
A family $\cal F$ of graphs is \emph{hereditary} if $G \subseteq { \cal F}$ and $H$ an induced subgraph of $G$ together imply that $H  \subseteq { \cal F}$. For example, see \cite{S86}. Many familiar classes of graphs are hereditary, such as planar graphs, acyclic graphs, and bipartite graphs.  Hereditary classes can be characterized by providing a list of  forbidden induced subgraphs, for example, bipartite graphs are those with no induced odd cycle.  Indeed, a family of graphs is hereditary if and only if it has a forbidden induced subgraph characterization.  In this section we show that the classes of NBC graphs and CNBC graphs are not hereditary classes of graphs.  Indeed, there is no graph which is a forbidden induced subgraph for the class of NBC graphs or CNBC graphs.
	
	\begin{thm} \label{induced-subgraph}
	Every graph is an induced subgraph of an NBC graph and of a CNBC graph.
	
	\end{thm}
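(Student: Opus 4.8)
The plan is to embed an arbitrary graph $G$ into a larger graph $G^*$ by attaching, for each vertex, enough auxiliary structure to force every closed (resp.\ open) neighborhood to contain equally many red and blue vertices. First I would fix an $(R,B)$-coloring of $V(G)$ arbitrarily — say color every vertex of $G$ red — and record, for each vertex $v$, the \emph{imbalance} $d(v)$ = (number of red vertices in $N[v]$) $-$ (number of blue vertices in $N[v]$) in the case of CNBC, or the analogous quantity using $N(v)$ in the case of NBC. The goal is to add new vertices and edges so that each original $v$ sees exactly $d(v)$ extra blue neighbors (if $d(v)>0$) or $|d(v)|$ extra red neighbors (if $d(v)<0$), while simultaneously ensuring that all the newly added vertices also have balanced neighborhoods, and that $G$ remains an induced subgraph (i.e.\ no new edges are added among vertices of $G$).

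The key device I would use is a small ``gadget'' graph that is itself CNBC (resp.\ NBC) and that has a vertex available to be identified with, or made adjacent to, a vertex of $G$ without disturbing balance; a natural candidate is $K_2$ or a short path/cycle with a prescribed balanced coloring, or — cleaner — a pendant-style construction. Concretely, for CNBC: to each $v\in V(G)$ with imbalance $d(v)$, attach $|d(v)|$ new leaves of the appropriate color, then pair up and fix the leaves among themselves. A single pendant edge $vx$ changes the closed neighborhood of $v$ but unbalances $x$ (whose closed neighborhood is $\{v,x\}$), so the leaves must be organized into a self-correcting block: e.g.\ replace each would-be pendant by a copy of a fixed CNBC graph $H$ with a designated red vertex $r_H$ and a designated blue vertex $b_H$, both of degree-robust enough that adding the single edge to $v$ keeps $N[r_H]$ (resp.\ $N[b_H]$) balanced. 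One clean way: use $H = K_4$ minus a perfect matching ($= C_4$) with the proper $2$-coloring — but one must check the edge to $v$ doesn't break $C_4$'s balance, which it does, so instead iterate: attach to $v$ a vertex $x$ of a larger balanced gadget in which $x$ has a pendant-absorbing partner. The excerpt itself hints at the right tool — ``additions of subgraphs with four vertices'' — so I would use a fixed $4$-vertex balanced attachment repeatedly, alternating red/blue additions so that each new vertex's neighborhood is itself balanced by construction.

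The main obstacle, and the step I expect to require the most care, is the \emph{simultaneous} balancing: fixing the imbalance at $v$ by adding neighbors must not create imbalance at those new neighbors, and the correction gadgets for different vertices $v, v'$ of $G$ must not interfere (they shouldn't, if kept vertex-disjoint except at their single attachment points, but one must verify the attachment point itself stays balanced after its one new edge). I would handle this by choosing the gadget so that its attachment vertex has, before attaching, an imbalance of exactly $\mp 1$ pointing the right way — i.e.\ the gadget is ``pre-stressed'' so that the one edge to $v$ perfects it. Building such a gadget is where the $4$-vertex building block of the paper comes in. For the NBC case the argument is parallel with $N[\cdot]$ replaced by $N(\cdot)$; note Observation~\ref{obs:odddegree} forces parity constraints (odd degree for CNBC, even for NBC), so the number of correction-neighbors added at each $v$ must have the right parity, which is automatic since $d(v) \equiv \deg_G(v)+1 \pmod 2$ after monochromatic coloring — I would check this parity bookkeeping explicitly. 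Finally I would confirm $G$ is induced in $G^*$: no edges were added within $V(G)$, so any two vertices of $G$ are adjacent in $G^*$ iff adjacent in $G$, completing the proof.
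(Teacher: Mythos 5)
Your proposal takes a genuinely different route from the paper -- local correction gadgets attached vertex-by-vertex, rather than a single global construction -- but as written it has a real gap: the gadget that does all the work is never actually built. You correctly identify that the hard part is making the attachment vertex of each gadget ``pre-stressed'' so that the one edge to $v$ balances both ends simultaneously, and then you defer exactly that step (``Building such a gadget is where the $4$-vertex building block of the paper comes in''). For the NBC case this is not a formality: by Observation~\ref{obs:odddegree} every vertex of an NBC graph has even degree, so no pendant or degree-one auxiliary vertex can ever appear, and the new blue neighbors you attach to $v$ must themselves acquire balanced \emph{open} neighborhoods through further edges among the new vertices. No candidate gadget is exhibited and no verification is given, so the NBC half of the theorem is unproved. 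There is also a factual slip in the CNBC half: you claim a pendant $x$ attached to $v$ has unbalanced closed neighborhood $\{v,x\}$, but if $x$ is colored opposite to $v$ this set has one red and one blue vertex and is balanced. In fact, coloring all of $G$ red and attaching $\deg_G(v)+1$ blue leaves to each vertex $v$ already yields a valid CNB-coloring with $G$ induced, so the CNBC case of your plan closes trivially -- you talked yourself out of a construction that works.

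For contrast, the paper avoids all imbalance bookkeeping with a doubling construction: replace each $v_i$ by a red copy $x_i$ and a blue copy $y_i$, and put edges $x_ix_j$, $y_iy_j$, $x_iy_j$ exactly when $v_iv_j\in E(G)$. Every vertex then sees one red and one blue copy of each $G$-neighbor, so open neighborhoods are balanced and the red copies induce $G$; adding the matching edges $x_iy_i$ balances the closed neighborhoods. If you want to salvage your approach, the missing step is to exhibit one explicit NBC gadget with a designated attachment vertex whose open neighborhood is off by exactly one in the right direction, and to verify the parity of the number of gadgets attached at each $v$; until that gadget is on the page, the argument is a plan rather than a proof.
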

	
	\begin{proof}
	Let $G $ be a graph and  write $V(G) = \{v_1,v_2, \ldots v_n\}$.  We form the graph $H$  as follows.    Define the vertex set as $V(H) = \{x_1,x_2, \ldots x_n\} \cup \{y_1,y_2, \ldots y_n\}$ and the edge set as  $E(H) = \{x_ix_j, \ y_iy_j, \ x_iy_j \ | \ v_iv_j \in E(G)\}$.  Note that  $G$ is an induced subgraph of $H$.  In graph $H$, color each vertex $x_i$ red and each $y_i$ blue.  Then  each vertex of $H$ has an equal number of red and blue vertices in its open neighborhood, so $H$ is an NBC graph.
	
	Now, we form a CNBC graph $H'$ as follows.  Let $V(H') = V(H)$ and and $E(H') = E(H) \cup \{x_iy_i | 1 \le i \le n\}$. Color the vertices as in $H$, and note that in moving from $H$ to $H'$ we have added an edge between each red vertex $x_i$ and its blue twin $y_i$.  As before, $G$ is an induced subgraph of $H'$ and now each vertex of $H'$ has an equal number of red and blue vertices in its closed neighborhood, so $H'$ is a CNBC graph.
	\end{proof}

 \begin{cor}
	The classes of NBC graphs and CNBC graphs are not hereditary.
	\end{cor}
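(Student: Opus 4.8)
The plan is to deduce this immediately from Theorem~\ref{induced-subgraph} together with the existence of a single graph lying outside each class. Recall that, to show a family $\mathcal{F}$ is not hereditary, it suffices to exhibit one graph $G \in \mathcal{F}$ having an induced subgraph $H \notin \mathcal{F}$.

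First I would choose a witness graph that fails to be an NBC graph; by Observation~\ref{obs:odddegree} any graph with a vertex of odd degree works, so for instance $G_0 = K_2$ (or the path $P_3$) is not an NBC graph. By Theorem~\ref{induced-subgraph}, $G_0$ is an induced subgraph of some NBC graph $H_0$. Then $H_0$ belongs to the class of NBC graphs while its induced subgraph $G_0$ does not, so that class is not hereditary. The argument for CNBC graphs is identical: by Observation~\ref{obs:odddegree} a graph with a vertex of even degree --- for example $P_3$, whose center has degree $2$, or the single vertex $K_1$ --- is not a CNBC graph, yet by Theorem~\ref{induced-subgraph} it is an induced subgraph of some CNBC graph, which therefore witnesses that the class of CNBC graphs is not hereditary either.

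There is essentially no obstacle here; the only point to verify is that some concrete small graph genuinely violates the relevant degree-parity condition, which Observation~\ref{obs:odddegree} supplies. As a closing remark, the same reasoning yields the stronger statement noted in the text that neither class admits a forbidden-induced-subgraph characterization: since \emph{every} graph whatsoever --- members and non-members alike --- is an induced subgraph of a graph in the class, no graph can serve as a forbidden induced subgraph for either family.
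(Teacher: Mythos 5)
Your proposal is correct and follows essentially the same route as the paper: invoke Theorem~\ref{induced-subgraph} on a graph that lies outside the class, with membership failure certified by the degree-parity condition of Observation~\ref{obs:odddegree}. The only cosmetic difference is that you use separate witnesses for each class ($K_2$, $P_3$, $K_1$), while the paper uses the single graph $K_{1,4}$, which is simultaneously not an NBC graph and not a CNBC graph.
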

	\begin{proof}
	Let $H$ be a graph (such as $K_{1,4}$) that is neither an NBC graph nor a CNBC graph.  By Theorem~\ref{induced-subgraph}, there exists an NBC graph $G$ and a CNBC graph $G'$ that contain $H$ as an induced subgraph.  Hence the families of NBC graphs and CNBC graphs are not hereditary.
	\end{proof}

\subsection{Unequal numbers of red and blue vertices} \label{sub:unequ} 
 
The proof of Theorem~\ref{induced-subgraph} takes a graph and creates a CNBC graph that contains the original graph as a subgraph. The next definition gives a way to start with a CNBC graph and create a new CNBC graph that has four additional vertices, one of one color and three of the other color. This construction will also be used in Section~\ref{trees}. 

\begin{defn} \rm 
	{\rm 
	Given a CNB-coloring of a graph $G$ and $z \in V(G)$, a 
	 \emph{$4$-vertex addition at $z$} is the operation of adding vertices $v$ and $x$ adjacent to $z$; and adding vertices $w_1$ and $w_2$ adjacent to $v$; and assigning $z$'s color to $v$ and the opposite color to  $x, w_1$ and $w_2$.
	}
 \label{four-vtx-addn}
	\end{defn}

	\begin{prop} \label{prop:4vt-add}
	Given a CNB-coloring of a graph $G$, let $G'$ be the graph resulting from a $4$-vertex addition at a vertex $z$.  Then $G'$ is a CNBC graph and $G'$ has one additional vertex of $z$'s color and three additional vertices of the opposite color. 
    
  \label{four-vertex-addition-lemma}
	\end{prop}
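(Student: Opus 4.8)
The plan is to verify directly that the coloring produced by the $4$-vertex addition is a CNB-coloring of $G'$, by checking the balance condition at every vertex of $G'$. The vertices split into two groups: the newly added vertices $v, x, w_1, w_2$, and the old vertices of $G$. For the new vertices, one simply writes down their closed neighborhoods in $G'$ and counts colors. Writing $c$ for $z$'s color and $\bar c$ for the opposite color: $N[x] = \{x, z\}$ has one vertex of color $\bar c$ (namely $x$) and one of color $c$ (namely $z$); $N[w_i] = \{w_i, v\}$ has one vertex of color $\bar c$ and one of color $c$; and $N[v] = \{v, z, w_1, w_2\}$ has two vertices of color $c$ (namely $v$ and $z$) and two of color $\bar c$ (namely $w_1, w_2$). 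So all four new vertices are balanced.

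Next I would handle the old vertices. For any old vertex $u \neq z$, its closed neighborhood in $G'$ equals its closed neighborhood in $G$, since the only edges added incident to old vertices are $zv$ and $zx$, which touch only $z$ among the old vertices. Hence the balance condition at $u$ is inherited from the CNB-coloring of $G$. The one remaining case is $u = z$: here $N_{G'}[z] = N_G[z] \cup \{v, x\}$. Since $v$ receives color $c$ and $x$ receives color $\bar c$, we have added exactly one vertex of each color to $z$'s closed neighborhood, so the counts of red and blue in $N_{G'}[z]$ differ by the same amount as in $N_G[z]$, namely zero. Therefore $z$ is balanced in $G'$, and $G'$ is a CNBC graph.

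Finally, the count of added vertices of each color is immediate from the definition of the $4$-vertex addition: $v$ gets $z$'s color, and $x, w_1, w_2$ get the opposite color, so $G'$ has one more vertex of $z$'s color and three more of the opposite color than $G$.

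I do not anticipate any real obstacle here; the proof is a routine case check. The only point requiring a moment's care is confirming that no edge of $G'$ other than $zv$ and $zx$ is incident to an old vertex, so that the closed neighborhoods of old vertices besides $z$ are genuinely unchanged — this is clear from the construction, in which $w_1$ and $w_2$ are joined only to $v$, and $v$ and $x$ are joined only to $z$ (and $v$ to the $w_i$). Once that is noted, the verification at each vertex type is a one-line color count.
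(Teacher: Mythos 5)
Your proof is correct and follows exactly the route the paper takes: the paper's proof simply asserts that the vertex-by-vertex verification of the balance condition is straightforward, and your write-up is that verification carried out in full (new vertices $x$, $w_1$, $w_2$, $v$, the vertex $z$, and the unchanged old vertices). No gaps.
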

	
	\begin{proof}
 It is straightforward to verify that for every vertex in $G'$, the number of red vertices in its closed neighborhood equals the number of blue vertices in its closed neighborhood. The $4$-vertex addition adds one vertex of $z$'s color (namely $v$) and three vertices of the opposite color ($x$, $w_1$, and $w_2$).
	\end{proof}

\begin{figure}[ht]
    \centering
        \begin{tikzpicture}[scale=.28]

        \draw[line width=5mm, gray, opacity=0.75] (13,0) -- (20,0);
            \draw[line width=3mm, gray, opacity=0.75] (-9,0) -- (-1,0);

        \draw[fill=white] (20,0) circle [radius=3.5];
        \draw[fill=white]  (10,0) circle [radius=3.5];
        \draw[fill=white]  (-1,0) circle [radius=3.5];
            \draw[fill=white] (-10,0) circle [radius=2.5];
                    
         \node[circle, draw, fill=white,  minimum size=.3cm, inner sep=0pt, label=left:$x$] (x) at (-22,2.3){};
            \node[circle, draw, fill=white,  minimum size=.3cm, inner sep=0pt, label=left:$z_1$] (z1) at (-22,0){};
            \node[circle, draw, fill=white,  minimum size=.3cm, inner sep=0pt, label=left:$z_2$] (z2) at (-22,-2.3){};
           \node[circle, draw, fill=white,  minimum size=.3cm, inner sep=0pt, label=right:$w_1$] (w1) at (-20,2.3){};
            \node[circle, draw, fill=white,  minimum size=.3cm, inner sep=0pt, label=right:$v$] (v) at (-20,0){};
            \node[circle, draw, fill=white,  minimum size=.3cm, inner sep=0pt, label=right:$w_2$] (w2) at (-20,-2.3){};
            
            \draw (x) -- (z1);
            \draw (z1) -- (z2);
            \draw (z1) -- (v);
            \draw (w1) -- (v);
            \draw (v) -- (w2);

            \node[] at (-20.8,-5){$H_6$};
        
            \node[circle, draw, fill=white,  minimum size=.3cm, inner sep=0pt] (x) at (9,2){};
            \node[circle, draw, fill=white,  minimum size=.3cm, inner sep=0pt] (z1) at (9,0){};
            \node[circle, draw, fill=white,  minimum size=.3cm, inner sep=0pt] (z2) at (9,-2){};
           \node[circle, draw, fill=white,  minimum size=.3cm, inner sep=0pt] (w1) at (11,2){};
            \node[circle, draw, fill=white,  minimum size=.3cm, inner sep=0pt] (v) at (11,0){};
            \node[circle, draw, fill=white,  minimum size=.3cm, inner sep=0pt] (w2) at (11,-2){};
            
            \draw (x) -- (z1);
            \draw (z1) -- (z2);
            \draw (z1) -- (v);
            \draw (w1) -- (v);
            \draw (v) -- (w2);
        
            \node[circle, draw, fill=white,  minimum size=.3cm, inner sep=0pt] (x') at (19,2){};
            \node[circle, draw, fill=white,  minimum size=.3cm, inner sep=0pt] (z1') at (19,0){};
            \node[circle, draw, fill=white,  minimum size=.3cm, inner sep=0pt] (z2') at (19,-2){};
           \node[circle, draw, fill=white,  minimum size=.3cm, inner sep=0pt] (w1') at (21,2){};
            \node[circle, draw, fill=white,  minimum size=.3cm, inner sep=0pt] (v') at (21,0){};
            \node[circle, draw, fill=white,  minimum size=.3cm, inner sep=0pt] (w2') at (21,-2){};
            
            \draw (x') -- (z1');
            \draw (z1') -- (z2');
            \draw (z1') -- (v');
            \draw (w1') -- (v');
            \draw (v') -- (w2');

            \node[] at (15.2,-5){$K_2 \circ H_6=H_6 \vee H_6$};
 
            \node[circle, draw, fill=white,  minimum size=.3cm, inner sep=0pt] (x) at (-10,1){};

            \node[circle, draw, fill=white,  minimum size=.3cm, inner sep=0pt] (z2) at (-10,-1){};
                 
            \draw (x) -- (z2);
               
            \node[circle, draw, fill=white,  minimum size=.3cm, inner sep=0pt] (x') at (-2,2){};
            \node[circle, draw, fill=white,  minimum size=.3cm, inner sep=0pt] (z1') at (-2,0){};
            \node[circle, draw, fill=white,  minimum size=.3cm, inner sep=0pt] (z2') at (-2,-2){};
           \node[circle, draw, fill=white,  minimum size=.3cm, inner sep=0pt] (w1') at (0,2){};
            \node[circle, draw, fill=white,  minimum size=.3cm, inner sep=0pt] (v') at (0,0){};
            \node[circle, draw, fill=white,  minimum size=.3cm, inner sep=0pt] (w2') at (0,-2){};
            
            \draw (x') -- (z1');
            \draw (z1') -- (z2');
            \draw (z1') -- (v');
            \draw (w1') -- (v');
            \draw (v') -- (w2');

            \node[] at (-5.8,-5){$K_2 \vee H_6$};
        \end{tikzpicture}

        \caption{The graphs $H_6$ and $K_2\vee H_6$,  and also the lexicographic product $K_2 \circ H_6$ which is also the join $H_6\vee H_6$.}
        \label{fig-H6}
\end{figure}
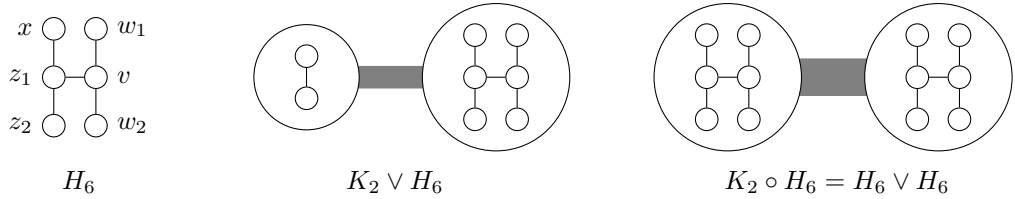

 \begin{example} \label{h6-tree} \rm In this example we construct a 6-vertex tree  $H_6$ that is a 4-vertex addition to $K_2$, and is a CNBC graph (see Figure~\ref{fig-H6}). Let the vertices of $K_2$ be $z_1$ and $z_2$, and make a 4-vertex addition at $z_1$ with $v,x,w_1, w_2$.  Since $K_2$ is a CNBC graph, by Proposition~\ref{prop:4vt-add}, $H_6$ is a CNBC graph. It is straightforward to check that in any CNB-coloring of $H_6$, the vertices $z_2, x, w_1, w_2$ are one color and $z_1, v$ are the opposite color, so that in any CNB-coloring, $H_6$ has an unequal number of red and blue vertices. We prove in Section~\ref{trees} that every 6-vertex CNBC tree is isomorphic to $H_6$. 
\end{example}
	
	\begin{cor}
	 There exist  CNBC graphs with a CNB-coloring that have arbitrarily more red vertices than blue vertices. Moreover, every CNBC graph is an induced subgraph of such a graph.
 \label{arb-more-red-cor}
	\end{cor}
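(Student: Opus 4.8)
The plan is to iterate the $4$-vertex addition operation, always applied at a \emph{blue} vertex, and to count colors. The key point is that a $4$-vertex addition at a blue vertex $z$ keeps $z$ blue while adding one new blue vertex ($v$) and three new red vertices ($x$, $w_1$, $w_2$); by Proposition~\ref{prop:4vt-add} the result is again a CNBC graph, so each step increases the surplus of red vertices over blue vertices by $2$. Since the only new edges are incident to the four new vertices, no edge within the original vertex set is disturbed, so the graph we started with survives as an induced subgraph and $z$ is still a blue vertex at which the operation can be repeated.

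Concretely, first I would observe that every CNBC graph has at least one blue vertex: for any vertex $v$, the set $N[v]$ is nonempty and has equally many red and blue vertices, hence contains at least one of each. Now let $G$ be an arbitrary CNBC graph with a fixed CNB-coloring, color classes $R$ and $B$, and fix a blue vertex $z \in B$. Set $G_0 = G$ and, for $i \ge 0$, let $G_{i+1}$ be the graph obtained from $G_i$ by a $4$-vertex addition at $z$, retaining the colors of $G_i$ and coloring the four new vertices as prescribed. By induction on $i$, using Proposition~\ref{prop:4vt-add}, each $G_i$ is a CNBC graph, its CNB-coloring still makes $z$ blue, $G$ is an induced subgraph of $G_i$, and $G_i$ has $|R| + 3i$ red vertices and $|B| + i$ blue vertices; in particular the number of red vertices of $G_i$ exceeds the number of blue vertices by $(|R| - |B|) + 2i$.

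To conclude, given any $t > 0$, choosing $k$ with $2k > t - (|R| - |B|)$ yields a CNBC graph $G_k$ with a CNB-coloring having more than $t$ more red vertices than blue, and containing $G$ as an induced subgraph; this is the ``moreover'' statement, and the first sentence is the special case $G = K_2$. I do not anticipate a real obstacle: the only subtle point is that one may apply the $4$-vertex addition repeatedly at the \emph{same} vertex $z$, which is justified because each addition preserves $z$'s color and puts exactly one red and one blue vertex into $N[z]$, so the hypothesis of Proposition~\ref{prop:4vt-add} holds at every stage.
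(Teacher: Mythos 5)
Your proposal is correct and follows essentially the same route as the paper: repeatedly perform $4$-vertex additions at a blue vertex, so that each step adds three red vertices and one blue vertex while preserving the CNBC property and the original graph as an induced subgraph. The extra care you take (checking that a blue vertex exists and that the operation can be repeated at the same vertex) is sound but not a different argument.
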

	
	\begin{proof}
	Let $G$ be a CNBC graph and fix a CNB-coloring of $G$.  Repeatedly make a $4$-vertex additions at  blue vertices.    The resulting graphs all contain $G$ as an induced subgraph and by Proposition~\ref{four-vertex-addition-lemma} they are all CNBC graphs.  After each addition there are three new red vertices and one new blue vertex, so the number of red vertices is increasing faster than the number of blue vertices.
	\end{proof}

Similarly, there exist NBC graphs that have more red than blue vertices.

\begin{defn} \rm \label{three-vtx-addn}
	Given an NB-coloring of a graph $G$ where $w,x$ are one color and $y,z$ are the opposite color, a
	 \emph{$3$-vertex addition at $\{w,x,y,z\}$} 
  is the operation of adding vertices $u, a_1, a_2$ such that $u$ is  adjacent to all of $w,x,y,z$; and $a_1$ is adjacent to $w$ and $y$; and $a_2$ is adjacent to $x $ and $z$; and assigning one color  to $u$ and the opposite color to $a_1$ and  $ a_2$. 
	\end{defn}

 \begin{prop} \label{prop-3vt-add}
	Given an NB-coloring of a graph $G$, if  $G'$ is the graph resulting from a $3$-vertex addition at a set $\{w,x,y,z\}$, then $G'$ is an NBC graph.   Moreover, $G'$ has one additional vertex of one color and  two additional vertices of the opposite color.
	\label{three-vertex-addition-prop}
	\end{prop}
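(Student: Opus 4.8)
The plan is to fix the given NB-coloring of $G$, say with $w,x$ colored red and $y,z$ colored blue (the two colors play symmetric roles), and then check directly that the extended coloring of $G'$ balances the open neighborhood of every vertex. Let $c$ denote the color assigned to $u$ and $\bar c$ the opposite color assigned to $a_1$ and $a_2$.

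First I would dispose of the old vertices. Every vertex $t\in V(G)\setminus\{w,x,y,z\}$ satisfies $N_{G'}(t)=N_G(t)$, since the three new vertices are adjacent only to vertices of $\{w,x,y,z\}$; hence its open neighborhood remains balanced. For the four vertices $w,x,y,z$, Definition~\ref{three-vtx-addn} gives each of them exactly two new neighbors: $w$ gains $u$ and $a_1$; $x$ gains $u$ and $a_2$; $y$ gains $u$ and $a_1$; $z$ gains $u$ and $a_2$. In every case one of the two new neighbors has color $c$ and the other has color $\bar c$, so the number of red neighbors and the number of blue neighbors each increase by one, and balance is preserved.

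Next I would check the three new vertices directly: $N_{G'}(u)=\{w,x,y,z\}$ contains the two red vertices $w,x$ and the two blue vertices $y,z$; $N_{G'}(a_1)=\{w,y\}$ has one red and one blue; and $N_{G'}(a_2)=\{x,z\}$ has one red and one blue. Thus every open neighborhood of $G'$ is balanced, so $G'$ is an NBC graph. The vertex count is immediate: the operation adds the single vertex $u$ of color $c$ and the two vertices $a_1,a_2$ of color $\bar c$. This is a routine verification with no genuine obstacle; the only point worth flagging is that the adjacency pattern in Definition~\ref{three-vtx-addn} was engineered precisely so that each of $w,x,y,z$ picks up exactly one new neighbor of each color, and that no other vertex of $G$ is affected at all.
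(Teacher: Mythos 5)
Your verification is correct and is exactly the check that the paper's proof declares ``straightforward to verify''; you have simply written out the details (unchanged old vertices, the one-new-neighbor-of-each-color gain at each of $w,x,y,z$, and the balanced neighborhoods of $u$, $a_1$, $a_2$) that the paper leaves implicit. Same approach, no issues.
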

 \begin{proof}
 It is straightforward to verify that for every vertex in $G'$, the number of red vertices in its open neighborhood equals the number of blue vertices in its open neighborhood. The $3$-vertex addition adds one  additional vertex of  one color  ($u$)
 and two vertices of the opposite color ($a_1, a_2$).
 \end{proof}

\begin{cor}
    There exist  NBC graphs that have an  NB-coloring with arbitrarily more red vertices than blue vertices. Moreover, every NBC graph is the induced subgraph of such a graph.
\end{cor}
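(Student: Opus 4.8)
The plan is to mirror the proof of Corollary~\ref{arb-more-red-cor}, using $3$-vertex additions (Proposition~\ref{prop-3vt-add}) in place of $4$-vertex additions. Recall from Definition~\ref{three-vtx-addn} and Proposition~\ref{prop-3vt-add} that a $3$-vertex addition at a set $\{w,x,y,z\}$, where $w,x$ receive one color and $y,z$ the opposite color, adds one new vertex of a freely chosen color together with two new vertices of the opposite color, and adds no edges among the old vertices; hence it preserves the property of being an NBC graph and preserves every old induced subgraph. Throughout I would take the single new vertex $u$ to be blue and the pair $a_1,a_2$ to be red, so that each addition increases the number of red vertices by $2$ and the number of blue vertices by $1$.

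The one point that needs care is getting started: to perform a $3$-vertex addition we need at least two red and at least two blue vertices in the current coloring, which an arbitrary NBC graph need not have (for instance, an edgeless graph colored entirely red). I would handle this by first passing to the disjoint union $G' = G + C_4$, where the $4$-cycle is labeled $1,2,3,4$ in cyclic order. Since $C_4$ is an NBC graph that admits the NB-coloring with color classes $\{1,2\}$ and $\{3,4\}$ (note it is the \emph{consecutive}, not the proper, $2$-coloring that is balanced here), and since the disjoint union of NBC graphs is an NBC graph under the disjoint union of the NB-colorings, $G'$ has an NB-coloring with at least two red and at least two blue vertices, and $G$ is an induced subgraph of $G'$. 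Fix such a coloring and fix vertices $w,x$ red and $y,z$ blue in $G'$.

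Next I would iterate: set $G_0 = G'$, and let $G_{i+1}$ be obtained from $G_i$ by a $3$-vertex addition at $\{w,x,y,z\}$ with $u$ blue and $a_1,a_2$ red. The colors of $w,x,y,z$ never change, so Proposition~\ref{prop-3vt-add} applies at each step, and hence every $G_i$ is an NBC graph with the described NB-coloring, and each $G_i$ contains $G$ as an induced subgraph. If $G'$ has $r_0$ red and $b_0$ blue vertices, then $G_k$ has $r_0 + 2k$ red and $b_0 + k$ blue vertices, so the excess of red over blue is $(r_0 - b_0) + k$. Given any target $m$, choosing $k$ with $(r_0 - b_0) + k \ge m$ produces an NBC graph with an NB-coloring having at least $m$ more red vertices than blue and containing $G$ as an induced subgraph, which proves both assertions.

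There is essentially no hard step here; the content is entirely in Proposition~\ref{prop-3vt-add}, and the only mild subtlety is the ``enough vertices of each color to begin'' issue, resolved by the $C_4$ gadget. The two things worth double-checking are that a $3$-vertex addition adds no edges inside the old vertex set (immediate from Definition~\ref{three-vtx-addn}), so that induced subgraphs are genuinely preserved, and that iterating Proposition~\ref{prop-3vt-add} is legitimate, which it is precisely because the four anchor vertices $w,x,y,z$ retain their colors after every addition.
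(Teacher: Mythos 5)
Your proof is correct and follows the same route as the paper's: iterate $3$-vertex additions with the new degree-$4$ vertex colored blue, so each step nets two new red vertices against one new blue vertex while preserving both the NBC property and $G$ as an induced subgraph. In fact your argument is slightly more careful than the paper's, which begins the iteration directly on an arbitrary NBC graph $G$ without checking that the fixed NB-coloring contains two vertices of each color (which can fail, e.g., for an edgeless graph colored entirely red); your disjoint-union-with-$C_4$ gadget closes that gap, and your explicit check that the anchor set $\{w,x,y,z\}$ retains its colors across iterations is exactly the point that makes the repeated application of Proposition~\ref{prop-3vt-add} legitimate.
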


\begin{proof} 

Let $G$ be an NBC graph and fix an NB-coloring of $G$.  Repeatedly make a $3$-vertex additions to $G$, and color the new  degree 4 vertex blue each time. The resulting graphs all contain $G$ as an induced subgraph and by Proposition~\ref{three-vertex-addition-prop}, they are all NBC graphs.  After each addition there are two new red vertices and one new blue vertex, so the number of red vertices is increasing faster than the number of blue vertices.
\end{proof}

\subsection{Complements, joins, and lexicographic products} \label{sub:compl}

In the previous section we studied CNBC and NBC graphs that had colorings in which the number of red and blue vertices are unequal.  In contrast,  we consider colorings with an equal number of red and blue vertices.  We write $N_H(v)$ for the open neighborhood of $v$ in graph $H$ and $N_H[v]$ for the closed neighborhood. 

\begin{thm}\label{thm:complement}  
 A coloring $(R,B)$  that has  $|R| = |B|$ is an NB-coloring of  graph $G$ if and only if it is a CNB-coloring of $\overline{G}$. 
\end{thm}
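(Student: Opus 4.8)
The plan is to reduce everything to a single combinatorial identity together with the hypothesis $|R|=|B|$. Fix a graph $G$ with vertex set $V$, $|V|=n$, and a coloring $(R,B)$ with $|R|=|B|=n/2$. The first step is to record that for every $v\in V$,
\[
N_{\overline{G}}[v] = V \setminus N_G(v),
\]
since a vertex $u\ne v$ lies in $N_{\overline{G}}[v]$ exactly when $uv\notin E(G)$, while $v$ itself lies in $N_{\overline{G}}[v]$ and (the graph being simple) not in $N_G(v)$. So the closed neighborhood of $v$ in $\overline{G}$ is precisely the set-complement in $V$ of the open neighborhood of $v$ in $G$.

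The second step is to turn the balance conditions into arithmetic. For $S\subseteq V$ write $r(S)=|R\cap S|$ and $b(S)=|B\cap S|$. Since $R\cup B=V$ is a partition with $|R|=|B|=n/2$, we have $r(S)+r(V\setminus S)=n/2$ and $b(S)+b(V\setminus S)=n/2$ for every $S\subseteq V$. Taking $S=N_G(v)$ and applying the identity above,
\[
r(N_{\overline{G}}[v]) - b(N_{\overline{G}}[v]) = \bigl(n/2 - r(N_G(v))\bigr) - \bigl(n/2 - b(N_G(v))\bigr) = b(N_G(v)) - r(N_G(v)),
\]
so $N_{\overline{G}}[v]$ is balanced in the coloring of $\overline{G}$ if and only if $N_G(v)$ is balanced in the coloring of $G$. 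The final step is simply to quantify this vertex-by-vertex equivalence over all $v\in V$: it says exactly that $(R,B)$ balances every open neighborhood of $G$ if and only if it balances every closed neighborhood of $\overline{G}$, i.e.\ that $(R,B)$ is an NB-coloring of $G$ iff it is a CNB-coloring of $\overline{G}$.

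I do not expect a genuine obstacle here; the argument is pure bookkeeping once the identity $N_{\overline{G}}[v]=V\setminus N_G(v)$ is in hand. The one subtlety worth flagging is exactly where the hypothesis $|R|=|B|$ is used: it enters precisely once, in the middle display, to cancel the two copies of $n/2$. Without it, the closed-neighborhood imbalance of $v$ in $\overline{G}$ and the open-neighborhood imbalance of $v$ in $G$ would instead differ by the global quantity $|R|-|B|$, which is why the equal-color-class assumption cannot be dropped.
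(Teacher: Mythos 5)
Your proof is correct and rests on the same underlying count as the paper's: both exploit that the closed neighborhood of $v$ in $\overline{G}$ is the set-complement of the open neighborhood of $v$ in $G$, together with $|R|=|B|$. Your formulation via the identity $N_{\overline{G}}[v]=V\setminus N_G(v)$ is a cleaner packaging --- it handles both directions and both possible colors of $v$ in one display, where the paper argues each direction separately with a case analysis on the color of $v$ --- but it is essentially the same argument.
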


\begin{proof}
Let $(R,B)$ be an NB-coloring of $G$ with $|R| = |B| = k$. Take any vertex $v\in V(G)$. By definition, $|N_{G}(v)\cap R|=j=|N_{G}(v)\cap B|$ for some $j$.  If $v$ is blue, then $v$ has $k-j$ red neighbors  and $k-j-1$ blue neighbors in $\overline{G}$, thus $|N_{G}[v]\cap R|=k-j=|N_{G}[v]\cap B|$. A similar argument holds for red vertices, thus  $(R,B)$ is a CNB-coloring for $\overline{G}$.

Conversely, Let $(R,B)$ be a CNB-coloring of $\overline{G}$ with $|R| = |B| = k$. Take any vertex $v\in V(\overline{G})$. By definition, $|N_{\overline{G}}[v]\cap R|=j=|N_{\overline{G}}[v]\cap B|$ for some $j$. If $v$ is blue then $v$ has $j-1$ blue neighbors in $\overline{G}$ and thus $|N_{G}(v)\cap B| = k-j$, and $v$ has $j$ red neighbors in $\overline{G}$ and thus $|N_{G}(v)\cap R| = k-j$.  A similar argument holds for red vertices, thus  $(R,B)$ is an NB-coloring for $G$.
\end{proof}

For example, the smallest NBC graph with $|R|=|B|$ is $\overline{K_2}$, and, since every vertex has odd degree in a CNBC graph, $K_2$ is the smallest CNBC graph. The next example shows that the hypothesis that $|R|=|B|$ is necessary. 

\begin{example} \label{H7} \rm 
In this example, we construct an 7-vertex graph $H_7$ that is an NBC graph and whose complement is not a CNBC graph.  Start with $\overline{K_4}$ and the NB-coloring that has two red and two blue vertices.  Make a $3$-vertex addition at these four vertices.   By Propositon~\ref{prop-3vt-add},  the resulting graph $H_7$ is an NBC graph, and has an NB-coloring with 4 vertices of one color and 3 of the opposite color. Each vertex of $H_7$ has even degree and $7$ is odd, so each vertrex in  $\overline{H_7}$  also has even degree and $\overline{H_7}$ is not a CNBC graph.
\end{example}

\bigskip

Recall that $G\vee H$ is the join of graphs $G$ and $H$, which by definition is $G+H$ with additional edges $\{gh\;|\; g\in V(G), h\in V(H)\}$. Figure~\ref{fig-H6} shows the join $K_2 \vee H_6$.
\begin{prop} \label{prop:join}

Let $G_1$ have a coloring $(R_1, B_1)$ and $G_2$ have a coloring $(R_2, B_2)$ such that $|R_1|=|B_1|$ and $|R_2|=|B_2|$.  If both $(R_1,B_1)$ and $(R_2, B_2)$ are CNB-colorings (respectively, NB-colorings) then  $G_1 \vee G_2$ is a CNBC graph (respectively, an NBC graph).
\end{prop}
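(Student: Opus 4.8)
The plan is to color $G_1\vee G_2$ by keeping the colors from each factor, i.e.\ to take $R=R_1\cup R_2$ and $B=B_1\cup B_2$. Writing $|R_1|=|B_1|=k_1$ and $|R_2|=|B_2|=k_2$, this is a balanced partition of $V(G_1\vee G_2)$ since $|R|=k_1+k_2=|B|$; the content of the proof is then to verify that every (closed, resp.\ open) neighborhood is balanced under this coloring.

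For the CNBC case I would fix a vertex $v$ and, using that the two factors play symmetric roles, assume $v\in V(G_1)$. Because $V(G_1)$ and $V(G_2)$ are disjoint and every vertex of $G_1$ is joined to every vertex of $G_2$, one gets the disjoint decomposition $N_{G_1\vee G_2}[v]=N_{G_1}[v]\cup V(G_2)$. The first set is balanced because $(R_1,B_1)$ is a CNB-coloring of $G_1$, and the second is balanced because $|R_2|=|B_2|=k_2$; summing the two balanced counts shows $N_{G_1\vee G_2}[v]$ has equally many red and blue vertices. The NBC case is the same computation with $N[v]$ replaced by $N(v)$ throughout: for $v\in V(G_1)$ one has $N_{G_1\vee G_2}(v)=N_{G_1}(v)\cup V(G_2)$ (a disjoint union), the first part balanced since $(R_1,B_1)$ is an NB-coloring of $G_1$, and the second balanced since $|R_2|=|B_2|$.

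There is no real obstacle beyond this bookkeeping; the one subtlety worth flagging is that the hypothesis $|R_i|=|B_i|$ is used precisely to make the ``cross'' piece $V(G_{3-i})$ of each neighborhood balanced, so the equal-size assumption is genuinely needed here, mirroring its role in Theorem~\ref{thm:complement}. As an alternative to the direct argument, the CNBC half could instead be deduced from Theorem~\ref{thm:complement} via $\overline{G_1\vee G_2}=\overline{G_1}+\overline{G_2}$ together with the observation that a disjoint union of graphs is an NBC graph precisely when each part is; but the direct neighborhood count above is shorter and handles the NBC case simultaneously.
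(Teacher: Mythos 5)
Your proof is correct and matches the paper's argument: both keep the original colorings and note that the join adds to each vertex's neighborhood exactly the vertex set of the other factor, which is balanced by the hypothesis $|R_i|=|B_i|$. The paper also notes the same alternative derivation via Theorem~\ref{thm:complement} and $\overline{G_1\vee G_2}=\overline{G_1}+\overline{G_2}$ that you mention.
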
 

\begin{proof} If $(R_1, B_1)$ and $(R_2, B_2)$ are CNB-colorings (respectively, NB-colorings), then for each vertex in $G_1\vee G_2$, the number of additional red and blue neighbors are equal, since $|R_1|=|B_1|$ and $|R_2|=|B_2|$.
\end{proof}

Proposition~\ref{prop:join} may also be proved using Theorem~\ref{thm:complement}, since $\overline{G_1\vee G_2}=\overline{G_1}+\overline{G_2}$. 
The hypothesis of $|R|=|B|$ is necessary and in the following we provide a counterexample if the hypothesis is dropped.

 \begin{example} \rm  In this example we show that there exist two CNBC graphs whose join is not a CNBC graph.  Let $H_6$ be the graph from Example~\ref{h6-tree} with $V(H_6) = \{v,x,z_1,z_2,w_1,w_2\}$ and consider the join of  $K_2$ and $H_6$ (see Figure~\ref{fig-H6}). Both $K_2$ and $H_6$ are CNBC graphs.  For a contradiction, assume that $K_2\vee H_6$ is a CNBC graph and let $(R,B)$ be a CNB-coloring of it. The closed neighborhood  in $K_2\vee H_6$ of a vertex in $K_2$ is $V(K_2\vee H_6)$, and this neighborhood must be balanced, so $|R| = |B|$.  Now $\overline{K_2 \vee H_6}=\overline{K_2}+\overline{H_6}$, and by Theorem~\ref{thm:complement}, $(R,B)$ is an NB-coloring of this graph.     The only edges in $\overline{K_2}+\overline{H_6}$ are in $\overline{H_6}$, so $(R,B)$ induces an NB-coloring of $\overline{H_6}$. By Observation~\ref{neighborhoods}, the color of $w_1$ equals the color of $w_2$.  However, 
 $N_{\overline{H_6}}(z_1)=\{w_1, w_2\}$, so this neighborhood is not balanced, a contradiction since $(R,B)$ induces an NB-coloring of $\overline{H_6}$.
 \end{example}

We recall the lexigraphic product of graphs $G$ and $H$. It is formed by putting a copy of $H$ in for each vertex of $G$, and then for every pair of adjacent vertices in $G$ adding a complete bipartite graph between their copies of $H$. Formally, the \textit{lexicographic product}  $G \circ H$ of graphs $G$ and $H$ has vertex set $V(G) \times V(H)$ and vertices $(g,h)$ and $(g', h')$ are adjacent in $G \circ H$ if and only if either  $gg' \in E(G)$ or  $g=g'$ and $hh' \in E(H)$. 

 \begin{cor}\label{thm:lexi}
    Let $G$ be any graph and suppose that $H$ be admits a CNB-coloring $(R,B)$ such that $|B|=|R|$.  Then $G \circ H$ is a CNBC graph.
\end{cor}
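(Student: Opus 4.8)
The plan is to reduce this corollary to the join result (Proposition~\ref{prop:join}) together with Theorem~\ref{thm:complement}, or alternatively to verify the coloring directly on $G\circ H$. I would take the direct route, since the lexicographic product has a very transparent neighborhood structure. First I would define a coloring $(R',B')$ of $G\circ H$ by coloring a vertex $(g,h)$ according to the color of $h$ in the given CNB-coloring $(R,B)$ of $H$; that is, $R' = V(G)\times R$ and $B' = V(G)\times B$. The key structural fact to record is that for a vertex $(g,h)$, its closed neighborhood in $G\circ H$ is $N_{G\circ H}[(g,h)] = \bigl(N_G(g)\times V(H)\bigr)\cup\bigl(\{g\}\times N_H[h]\bigr)$, and this union is disjoint because $g\notin N_G(g)$.

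Next I would count colors in that closed neighborhood. The second piece, $\{g\}\times N_H[h]$, contributes exactly $|N_H[h]\cap R|$ red and $|N_H[h]\cap B|$ blue vertices, and these are equal because $(R,B)$ is a CNB-coloring of $H$. The first piece, $N_G(g)\times V(H)$, contributes $|N_G(g)|\cdot|R|$ red vertices and $|N_G(g)|\cdot|B|$ blue vertices, and these are equal because $|R|=|B|$ by hypothesis. Adding the two disjoint contributions, the closed neighborhood of every vertex of $G\circ H$ is balanced, so $(R',B')$ is a CNB-coloring and $G\circ H$ is a CNBC graph.

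I do not anticipate a genuine obstacle here; the only point requiring a little care is the disjointness claim for the two parts of $N_{G\circ H}[(g,h)]$ and making sure the vertex $(g,h)$ itself is accounted for exactly once — it lies in $\{g\}\times N_H[h]$ (since $h\in N_H[h]$) and not in $N_G(g)\times V(H)$, so the formula for the closed neighborhood is correct as stated. One could equally phrase the argument as: $G\circ H$ restricted to a fiber $\{g\}\times V(H)$ is a copy of $H$, and every vertex outside that fiber that is adjacent to $(g,h)$ is adjacent to the entire fiber, contributing balanced color counts because $H$'s color classes are balanced. Either phrasing yields the result in a few lines, so I would keep the write-up short and self-contained rather than invoking Theorem~\ref{thm:complement}.
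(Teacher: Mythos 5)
Your proof is correct, but it takes a genuinely different route from the paper. The paper deduces the corollary from Theorem~\ref{thm:complement} together with Theorem~6 of \cite{FM24}: it sets $W_1=\overline{G}$, $W_2=\overline{H}$, notes that $\overline{H}$ is an NBC graph with balanced color classes, invokes the cited theorem to get that $\overline{G}\circ\overline{H}$ is an NBC graph with $|R|=|B|$, and then uses the identity $\overline{\overline{G}\circ\overline{H}}=G\circ H$ to complement back. You instead verify the coloring directly: color $(g,h)$ by the color of $h$, decompose $N_{G\circ H}[(g,h)]$ as the disjoint union $\bigl(N_G(g)\times V(H)\bigr)\cup\bigl(\{g\}\times N_H[h]\bigr)$, and observe that the first part is balanced because $|R|=|B|$ and the second because $(R,B)$ is a CNB-coloring of $H$. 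Your decomposition and counting are right, including the point that $(g,h)$ lies only in the fiber piece. What your approach buys is self-containment and transparency --- it needs no external citation and no complement gymnastics, and it makes explicit exactly where each hypothesis is used ($|R|=|B|$ for the cross-fiber neighbors, the CNB property for the within-fiber neighbors). What the paper's approach buys is economy given the machinery already in place: it reuses Theorem~\ref{thm:complement} and a published NBC result, reinforcing the complementation duality that organizes Section~\ref{sub:compl}. Either proof is acceptable; yours is arguably the cleaner one to read in isolation.
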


\begin{proof} 
Theorem~6 in \cite{FM24} says that for any graphs $W_1, W_2$ where $W_2$ is an NBC graph with an NB-coloring $(R,B)$ such that  $|R|=|B|$, then $W_1\circ W_2$ is an NBC graph. In the proof, each copy of $W_2$ is colored the same. Thus, for $W_1\circ W_2$, $|R|=|B|$.  Let $W_1=\overline{G}$ and $W_2=\overline{H}$. By Theorem~\ref{thm:complement}, $\overline{H}$ is an NBC graph with the same coloring as $H$. Thus, $\overline{G}\circ \overline{H}$ is an NBC graph, with a coloring where $|R|=|B|$. Now $\overline{\overline{G}\circ \overline{H}} = G\circ H$. By Theorem~\ref{thm:complement}, $G\circ H$ is a CNBC graph.  
\end{proof}

The following example shows that the hypothesis of $|B|=|R|$  in Corollary~\ref{thm:lexi} is necessary. 

\begin{example} \rm 
In this example we show that there exist a graph $G$ and a CNBC graph $H$ such that $G\circ H$ is not a CNBC graph. Let $G=K_2$, and $H=H_6$ from Example~\ref{h6-tree}. Then $K_2\circ H_6$ is the join of two copies of $H_6$. Let the vertices of one copy be $z_1, z_2, x,v, w_1, w_2$ and the vertices of the other be $z'_1, z'_2, x',v', w'_1, w'_2$. Assume that $(R,B)$ is a CNB-coloring of $K_2\circ H_6$. Since $N_{K_2\circ H_6}(w_1)=N_{K_2\circ H_6}(w_2)$, by Observation~\ref{neighborhoods}, the color of $w_1$ is the same as the color of $w_2$. Now  $N_{K_2\circ H_6}[z_1]=V(K_2\circ H_6)-\{w_1, w_2\}$, and is balanced, so has five red and five blue vertices. Without loss of generality, let $w_1, w_2$ be red. Then $|R|=7$ and $|B|=5$. By similar reasoning, $w'_1$ and $w'_2$ are the same color, and $N_{K_2\circ H_6}[z_1']$ has 5 red and 5 blue vertices, so $w'_1$ and $w'_2$ are red. By the reflective symmetry of $H_6$, it follows that $z_2, x, z'_2, x'$ are also red, but then there are at least 8 red vertices. This contradiction shows $K_2\circ H_6$ is not a CNBC graph.
    
\end{example}
	
\section{Counting results}\label{sect:count}

In this section, we present several counting results that will be useful for characterizing CNBC and NBC graphs. 
	
	\begin{thm} Let $G$ be a graph and $(R,B)$ be a CNB-coloring of $G$. Then  $|R| + \sum_{v \in R} \deg(v)$ $ =$ $ |B| + \sum_{v \in B} \deg(v) $.
	 \label{degree-thm}
	\end{thm}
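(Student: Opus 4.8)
The plan is to count, via a double-counting / contribution argument, the quantity
\[
\sum_{v \in V(G)} \bigl(\text{number of red vertices in } N[v]\bigr).
\]
On one hand, since $(R,B)$ is a CNB-coloring, each closed neighborhood $N[v]$ contains an equal number of red and blue vertices; if we write $|N[v]| = \deg(v)+1$, then the number of red vertices in $N[v]$ equals $\tfrac{1}{2}(\deg(v)+1)$ (this is where Observation~\ref{obs:odddegree} guarantees $\deg(v)+1$ is even, so the expression makes sense). Hence
\[
\sum_{v\in V(G)} \bigl(\#\text{red in } N[v]\bigr) = \sum_{v\in V(G)} \tfrac{1}{2}(\deg(v)+1) = \tfrac12\Bigl(|V(G)| + \sum_{v\in V(G)}\deg(v)\Bigr).
\]
On the other hand, I would recount the same sum by asking, for each red vertex $u$, how many closed neighborhoods $N[v]$ it lies in: it lies in $N[v]$ precisely when $v\in N[u]$, i.e.\ for $v=u$ and for each of the $\deg(u)$ neighbors of $u$. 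So $u$ is counted $\deg(u)+1$ times, giving
\[
\sum_{v\in V(G)} \bigl(\#\text{red in } N[v]\bigr) = \sum_{u\in R}(\deg(u)+1) = |R| + \sum_{u\in R}\deg(u).
\]
By the identical argument applied to blue vertices, the same total equals $|B| + \sum_{u\in B}\deg(u)$. Equating the two expressions gives the claim.

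Alternatively — and perhaps more cleanly — one can avoid the halving step: directly equate the red-count recount with the blue-count recount. For each vertex $v$, $\#\text{red in }N[v] = \#\text{blue in }N[v]$ by hypothesis; summing this identity over all $v\in V(G)$ and applying the contribution computation above to each side yields $|R| + \sum_{u\in R}\deg(u) = |B| + \sum_{u\in B}\deg(u)$ immediately. This is the approach I would actually write up, since it sidesteps any parity subtleties.

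There is essentially no obstacle here: the only point requiring a moment's care is the bookkeeping that $v$ belongs to $N[u]$ if and only if $u$ belongs to $N[v]$ (symmetry of adjacency, plus the fact that closed neighborhoods contain the vertex itself), so that summing "red vertices in closed neighborhoods" over all vertices equals summing "closed neighborhood sizes" over all red vertices. I expect to state this swap explicitly and then the result drops out in one line. I would present the write-up in the second (no-halving) form.
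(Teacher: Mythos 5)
Your proposal is correct, and your preferred ``no-halving'' version is essentially identical to the paper's proof: the paper's incidence matrices $M$ and $M'$ are just a bookkeeping device for your double count, with column sums giving $|R|+\sum_{v\in R}\deg(v)$ and $|B|+\sum_{v\in B}\deg(v)$ and equal row sums coming from the CNB hypothesis. No substantive difference.
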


	\begin{proof}
		Consider the incidence matrix $M$ in which the rows are indexed by $V(G)$, the columns by $R$, and for $v_i \in V(G)$, $v_j \in R$ we have $M_{v_iv_j} = 1$ if $v_j \in N[v_i]$ and otherwise, $M_{v_iv_j} = 0$.    There are $\deg(v_j) + 1$ ones in the column corresponding to the red vertex $v_j$, so summing over all columns we calculate the total  number of ones in $M$ is $\sum_{v \in R} (\deg(v) + 1) = |R| + \sum_{v \in R} \deg(v) $. Analogously, define the incidence matrix $M'$ in which the  columns  are indexed by $B$, and for $v_i \in V(G)$, $v_j \in B$ we have $M'_{v_iv_j} = 1$ if $v_j \in N[v_i]$ and otherwise, $M'_{v_iv_j} = 0$. The total  number of ones in $M'$ is $\sum_{v \in B} (\deg(v) + 1) = |B| + \sum_{v \in B} \deg(v) $. 
		
		\smallskip
		
		Now $N[v_i]$ has an equal number of red and blue vertices because $(R,B)$ is CNB-coloring.  Thus, for each $i$, row $v_i$ of $M$ has the same number of ones as row $v_i$ of $M'$, and consequently, matrices $M$ and $M'$ have the same total number of ones.  Therefore, 	$|R| + \sum_{v \in R} \deg(v) = |B| + \sum_{v \in B} \deg(v) $. 
\smallskip
	\end{proof}

  \begin{example} \rm  In this example we show that the wheel $W_n$ is a CNBC graph if and only if $n=3$. The wheel $W_n$ consists of the cycle $C_n$ and universal vertex $u$ adjacent to each vertex on the cycle. Thus, $u$ has degree $n$ and the other vertices have degree 3. Suppose that $(R,B)$ is a CNB-coloring of $W_n$. Now $N[u]=V(W_n)$, and hence $|R|=|B|=\frac{n+1}{2}$. Without loss of generality, let $u$ be blue, then the cycle contains $\frac{n-1}{2}$ blue vertices and $\frac{n+1}{2}$ red vertices. Then $\sum_{v \in B} \deg(v)=n+\frac{3(n-1)}{2}$, and  $\sum_{v \in R} \deg(v) =\frac{3(n+1)}{2}$. By Theorem~\ref{degree-thm}, these are equal. Solving for $n$, we get $n=3$. 

\end{example}

For additional counting results, we consider edges based on the colors of their endpoints.

\begin{defn} \rm  \label{defn: color number} Let $G$ be a graph and $(R,B)$ be a CNB-coloring of $G$.
We let  $|BB|$ be the number of edges with two blue endpoints,  $|RR|$   be the number of edges with two red endpoints,  and  $|BR|$  be the number of edges with one red and one blue endpoint.
\end{defn}

\begin{cor} For any CNB-coloring $(R,B)$ of a graph $G$ if $|R|=|B|$, then $|RR|=|BB|$. \label{redequblue}
\end{cor}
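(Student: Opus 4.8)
The plan is to apply Theorem~\ref{degree-thm} together with a standard handshake-style count that relates $\sum_{v\in R}\deg(v)$ to the edge-color counts $|RR|$ and $|BR|$. First I would observe that each red vertex contributes to the degree sum once for every edge incident to it, so summing degrees over all red vertices counts every $RR$-edge twice and every $BR$-edge once; that is, $\sum_{v\in R}\deg(v) = 2|RR| + |BR|$. Symmetrically, $\sum_{v\in B}\deg(v) = 2|BB| + |BR|$.

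Next I would substitute these two identities into the conclusion of Theorem~\ref{degree-thm}, namely $|R| + \sum_{v\in R}\deg(v) = |B| + \sum_{v\in B}\deg(v)$. This yields $|R| + 2|RR| + |BR| = |B| + 2|BB| + |BR|$, and cancelling the common term $|BR|$ gives $|R| + 2|RR| = |B| + 2|BB|$. Finally, invoking the hypothesis $|R| = |B|$, the terms $|R|$ and $|B|$ cancel as well, leaving $2|RR| = 2|BB|$, hence $|RR| = |BB|$, as desired.

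I do not anticipate a genuine obstacle here: the only ingredients are the previously established Theorem~\ref{degree-thm} and the elementary double-counting of edges by endpoint colors. The one point to state carefully is the edge-count identity $\sum_{v\in R}\deg(v) = 2|RR| + |BR|$ — it should be justified explicitly (each $RR$-edge has both endpoints in $R$, each $BR$-edge exactly one), since everything else is immediate cancellation. I would present the argument in three short lines: the two handshake identities, the substitution into Theorem~\ref{degree-thm}, and the cancellation using $|R|=|B|$.
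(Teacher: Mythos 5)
Your proposal is correct and is essentially identical to the paper's own proof: both substitute the handshake identities $\sum_{v\in R}\deg(v)=2|RR|+|BR|$ and $\sum_{v\in B}\deg(v)=2|BB|+|BR|$ into Theorem~\ref{degree-thm} and cancel using $|R|=|B|$. No issues.
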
 

\begin{proof}
By Theorem~\ref{degree-thm}, $|R|+\sum_{v\in R} \deg(v) =
|B|+ \sum_{v\in B} \deg(v)$. Now, $\sum_{v\in R} \deg(v) = 2|RR|+|RB|$, and 
$\sum_{v\in B} \deg(v) = 2|BB|+|RB|$, thus
$|R|+ 2|RR| + |RB| = |B|+ 2|BB|+|RB|$.  By hypothesis we have $|R|=|B|$, so  we conclude $|RR|=|BB|$.
\end{proof}

The next corollary is an analogue of Theorem~\ref{degree-thm} and Corollary~\ref{redequblue} for NBC graphs. 
Note that the conclusion $|RR|=|BB|$ is proven as Theorem~1 in \cite{FM24}.

\begin{cor} \label{nb:redequblue}
If $(R,B)$ is an  NB-coloring  of a graph $G$, then  $\sum_{v \in R} \deg(v) =\sum_{v \in B} \deg(v) $, and $|RR|=|BB|$. 
\end{cor}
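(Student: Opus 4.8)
The plan is to mirror the proof of Theorem~\ref{degree-thm} and Corollary~\ref{redequblue}, but with open neighborhoods in place of closed ones, so that the ``$+1$'' correction terms disappear. Concretely, I would set up the bipartite incidence matrices $M$ (rows indexed by $V(G)$, columns by $R$, with $M_{v_i v_j}=1$ iff $v_j\in N(v_i)$) and $M'$ (columns indexed by $B$, same rule with $N(v_i)$). Since $(R,B)$ is an NB-coloring, each row $v_i$ of $M$ has the same number of ones as the corresponding row of $M'$, namely $|N(v_i)\cap R|=|N(v_i)\cap B|$; hence $M$ and $M'$ have the same total number of ones. Counting the ones of $M$ by columns gives $\sum_{v\in R}\deg(v)$ (the column for $v_j\in R$ has exactly $\deg(v_j)$ ones, since $v_j\notin N(v_j)$), and likewise the total for $M'$ is $\sum_{v\in B}\deg(v)$. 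Equating the two totals yields $\sum_{v\in R}\deg(v)=\sum_{v\in B}\deg(v)$, which is the first assertion.

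For the second assertion, I would substitute the edge-count identities $\sum_{v\in R}\deg(v)=2|RR|+|RB|$ and $\sum_{v\in B}\deg(v)=2|BB|+|RB|$ (each edge contributes to the degree sum of its endpoints' color classes in the obvious way) into the equality just obtained. The $|RB|$ terms cancel and the factor $2$ divides out, leaving $|RR|=|BB|$ directly, with no need for a $|R|=|B|$ hypothesis (in contrast to Corollary~\ref{redequblue}, where the hypothesis was needed precisely to absorb the $|R|-|B|$ term coming from the closed-neighborhood ``$+1$''s).

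Alternatively, and perhaps more cleanly, one could deduce everything from results already in the excerpt: by Theorem~\ref{thm:complement}-style reasoning the first identity is essentially a degree-sum bookkeeping fact, but since the excerpt notes that $|RR|=|BB|$ is Theorem~1 of \cite{FM24}, I would simply cite that for the edge-count conclusion and give the short incidence-matrix argument above only for the degree-sum identity. I do not anticipate a genuine obstacle here: the statement is a routine parity/double-counting fact, and the only thing to be careful about is the off-by-one bookkeeping difference between $N(v)$ and $N[v]$ — making sure the diagonal entries are excluded so that the column of $M$ for $v_j$ has exactly $\deg(v_j)$ ones rather than $\deg(v_j)+1$.
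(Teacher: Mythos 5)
Your proposal is correct and is exactly the argument the paper intends: its proof of this corollary simply says to reuse the incidence matrices from Theorem~\ref{degree-thm} (now with open neighborhoods, so the columns have $\deg(v_j)$ rather than $\deg(v_j)+1$ ones) and then repeat the degree-sum substitution from Corollary~\ref{redequblue}, where the $|RB|$ terms cancel without needing $|R|=|B|$. Your bookkeeping of the missing diagonal entries is precisely the point of difference from the closed-neighborhood case, and you have it right.
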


\begin{proof}
Use the same incidence matrices as in the proof of Theorem~\ref{degree-thm}, and a similar argument to the proof of Corollary~\ref{redequblue}.
\end{proof}

We next provide another counting result that applies to all CNBC graphs. 
 	\begin{thm}
Let $G$ be a CNBC graph.  If $|E(G)| $ is even, then $|V(G)| \equiv 0 \pmod  4$, and if $|E(G)| $ is odd, then $|V(G)| \equiv 2 \pmod 4$.
		\label{parity-thm}
	\end{thm}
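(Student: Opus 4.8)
The plan is to count vertices and edges modulo small numbers using the structure imposed by Observation~\ref{obs:odddegree} and Theorem~\ref{degree-thm}. First I would recall that since $G$ is a CNBC graph, every vertex has odd degree, and by Observation~\ref{obs:odddegree} the number of vertices $n = |V(G)|$ is even; write $n = |R| + |B|$ where $(R,B)$ is a fixed CNB-coloring. The key relation is Theorem~\ref{degree-thm}: $|R| + \sum_{v\in R}\deg(v) = |B| + \sum_{v\in B}\deg(v)$, which I would rearrange as $|R| - |B| = \sum_{v\in B}\deg(v) - \sum_{v\in R}\deg(v)$.

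Next I would work modulo $4$. Since each vertex has odd degree, $\deg(v) + 1 \equiv 0 \pmod 2$; more precisely I want to track $\deg(v)$ modulo $4$, but the cleaner route is to note that $\sum_{v\in R}(\deg(v)+1)$ counts the total number of incidences of closed neighborhoods with red vertices, which (as in the proof of Theorem~\ref{degree-thm}) equals $\sum_{v\in V}|N[v]\cap R|$. Because each closed neighborhood is balanced, $|N[v]\cap R| = |N[v]\cap B| = \tfrac{1}{2}|N[v]| = \tfrac{1}{2}(\deg(v)+1)$ for every $v$. Summing over all $v$, the total number of such incidences is $\tfrac12\sum_{v\in V}(\deg(v)+1) = \tfrac12\bigl(2|E(G)| + n\bigr) = |E(G)| + \tfrac n2$. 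On the other hand this same total equals $\sum_{v\in R}(\deg(v)+1) = |R| + \sum_{v\in R}\deg(v)$, which is what Theorem~\ref{degree-thm} already gives; so the useful new identity is simply
\[
|E(G)| + \frac{n}{2} = |R| + \sum_{v\in R}\deg(v).
\]
By symmetry the same quantity equals $|B| + \sum_{v\in B}\deg(v)$, consistent with Theorem~\ref{degree-thm}. Now I would add these two expressions: $2\bigl(|E(G)| + \tfrac n2\bigr) = n + \sum_{v\in V}\deg(v) = n + 2|E(G)|$, which is an identity and gives nothing. The real content comes from reducing $|E(G)| + \tfrac n2$ modulo $2$: since $n/2$ is an integer and $|N[v]\cap R| = \tfrac12(\deg(v)+1)$ must itself be an integer for each $v$ (forcing odd degree, already known), I would instead compute $\sum_v |N[v]\cap R| \pmod 2$ directly.

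Here is the step I expect to be the crux. Consider $S := \sum_{v\in V}|N[v]\cap R| = |E(G)| + \tfrac n2$. I want to evaluate $S \bmod 2$ a second way: $S = \sum_{v\in R}|N[v]\cap R| + \sum_{v\in B}|N[v]\cap R|$; but each term in the first sum is $\tfrac12(\deg(v)+1)$ and likewise in the second. A slicker count: $\sum_{v\in V}|N[v]\cap R| = \sum_{r\in R}|N[r]| = \sum_{r\in R}(\deg(r)+1) = |R| + 2|RR| + |RB|$. Since $|R|$ is the parity I want to isolate, I get $S \equiv |R| + |RB| \pmod 2$. Similarly $S \equiv |B| + |RB| \pmod 2$, so $|R| \equiv |B| \pmod 2$, hence $n = |R|+|B|$ is even (reconfirming) and moreover $n/2 \equiv |R| + |RB| - |E(G)| \pmod 2$... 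I would organize this as: $|E(G)| + \tfrac n2 = |R| + 2|RR| + |RB|$, so $\tfrac n2 \equiv |R| + |RB| - |E(G)| \equiv |R| + |RB| + |E(G)| \pmod 2$; since $|E(G)| = |RR| + |BB| + |RB|$ and $|RR| = |BB|$ by Corollary~\ref{redequblue} when $|R| = |B|$ — but in general $|R|$ need not equal $|B|$, so I must avoid that shortcut. Instead, simply: $\tfrac n2 \equiv |R| + |RB| + |RR| + |BB| + |RB| \equiv |R| + |RR| + |BB| \pmod 2$, and symmetrically $\tfrac n2 \equiv |B| + |RR| + |BB|$, giving $|R| \equiv |B|$; writing $|R| - |B| = 2t$ from Theorem~\ref{degree-thm}-type bookkeeping and substituting back yields $\tfrac n2 \equiv |E(G)| \pmod 2$ after canceling. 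Therefore $n \equiv 2|E(G)| \pmod 4$: if $|E(G)|$ is even then $n \equiv 0 \pmod 4$, and if $|E(G)|$ is odd then $n \equiv 2 \pmod 4$, which is exactly the claim. The main obstacle is getting the parity bookkeeping clean enough that the $|RB|$ and $|RR|,|BB|$ terms cancel without invoking $|R|=|B|$; I would double-check it by testing on $K_2$ (one edge, two vertices: $|E|$ odd, $n\equiv 2$) and on $K_4$ (six edges, four vertices: $|E|$ even, $n \equiv 0$).
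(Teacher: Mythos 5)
Your double-counting identity $\sum_{v\in V}|N[v]\cap R| = |E(G)| + \tfrac{n}{2} = |R| + \sum_{v\in R}\deg(v)$ is correct and is in fact all you need, but the argument you actually run from it has a gap at the very end. After expanding $\sum_{v\in R}\deg(v) = 2|RR| + |RB|$ you arrive at $\tfrac{n}{2} \equiv |R| + |RR| + |BB| \pmod 2$, and then assert that ``substituting back yields $\tfrac{n}{2}\equiv |E(G)| \pmod 2$ after canceling.'' That cancellation is exactly equivalent to the claim $|R| \equiv |RB| \pmod 2$, which nothing in your write-up establishes; knowing $|R|\equiv |B| \pmod 2$ does not give it, and checking $K_2$ and $K_4$ does not close the logical gap. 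The claim is true --- counting $RB$-edges at their red endpoints, each red vertex $v$ has $\tfrac{\deg(v)+1}{2}$ blue neighbors and $\tfrac{\deg(v)-1}{2}$ red neighbors, so $|RB| = 2|RR| + |R|$ --- but you need to say this (it is the same computation the paper uses in Theorem~\ref{lem:degrees 3(4) and 1(4)}).

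The cleaner fix is to stop at your identity $|E(G)| + \tfrac{n}{2} = |R| + \sum_{v\in R}\deg(v)$ and invoke Observation~\ref{obs:odddegree}: every vertex of a CNBC graph has odd degree, so $\sum_{v\in R}\deg(v) \equiv |R| \pmod 2$ and the right-hand side is $\equiv 2|R| \equiv 0 \pmod 2$, giving $\tfrac{n}{2} \equiv |E(G)| \pmod 2$ immediately. This is essentially the paper's own route (it phrases the same parity bookkeeping through Theorem~\ref{degree-thm}, using that $\sum_{v\in R}\deg(v)$, $|R|$, and $|B|$ all share a parity); the detour through $|RR|$, $|BB|$, $|RB|$ is what created the unproven step.
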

	
	\begin{proof}
	Let $G$ be a CNBC graph and  let $(R,B)$ be a CNB-coloring of $G$.  Thus, $|V(G)| = |R| + |B|$. By Observation~\ref{obs:odddegree}, $G$ has an even number of vertices, so $|R|$ and $|B|$ have the same parity.  
	Let $x = \sum_{v \in R} \deg(v)$ and let $y = \sum_{v \in B} \deg(v)$, so by Theorem~\ref{degree-thm}, we know $|R| + x = |B| + y$.  Solving for $y$ yields $y = |R| - |B| + x$.  By Observation~\ref{obs:odddegree},  $\deg(v)$ is odd for each vertex $v$ in $G$, so $x$ and $|R|$ have the same parity.  We observed earlier that $|R|$ and $|B|$ have the same parity, so $x$ and $|B|$ also have the same parity.
  
  Using the degree sum formula for graphs and substituting for $y$ from above we get
  $2|E(G)| = \sum_{v \in V(G)} \deg(v) = x + y = x + (|R| - |B| + x)$.   Thus $|E(G)| = x + \frac{1}{2} (|R| - |B|) = x +  \frac{1}{2} (|R| + |B|)  - |B|$.  Since $|V(G)| = |R| + |B|$ we have $\frac{1}{2} |V(G)| = |E(G)| - x  + |B|$.  As shown above, $x$ and $|B|$ have the same parity, thus $|E(G)|$ and $\frac{1}{2}|V(G)|$ have the same parity.  If $|E(G)|$ is even, then $\frac{1}{2}|V(G)|$ is even and $|V(G)| \equiv 0 \pmod  4$.  
 If $|E(G)|$ is odd then $\frac{1}{2}|V(G)|$ is odd, but $|V(G)|$ is even, so  and $|V(G)| \equiv 2 \pmod  4$.	
	\end{proof}

Further information about the numbers of red and blue vertices is provided in the next theorem. 

 \begin{thm} \label{lem:degrees 3(4) and 1(4)}
 Let $(R,B)$ be a CNB-coloring of graph $G$. There are an even number of red vertices whose degree is congruent to 3 modulo 4 and also an even number of blue vertices whose degree is congruent to 3 modulo 4.  In addition, there are an even number of vertices in $G$ whose degree is congruent to 1 modulo 4. 
 \end{thm}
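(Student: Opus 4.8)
The plan is to exploit the fact that in a CNBC graph every closed neighborhood is balanced and has even size: for $v\in V(G)$ the set $N[v]$ has $\deg(v)+1$ vertices, of which exactly $\tfrac12(\deg(v)+1)$ are red and $\tfrac12(\deg(v)+1)$ are blue. First I would record a small parity dictionary. By Observation~\ref{obs:odddegree} every vertex of a CNBC graph has odd degree, so we may write $\deg(v)=2k_v+1$; then $\deg(v)\equiv 1\pmod 4$ exactly when $k_v$ is even, and $\deg(v)\equiv 3\pmod 4$ exactly when $k_v$ is odd.

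Next I would count red--red incidences. For a red vertex $v$, balance of $N[v]$ gives $|N[v]\cap R|=\tfrac12(\deg(v)+1)$, and removing $v$ itself yields $|N(v)\cap R|=\tfrac12(\deg(v)-1)=k_v$. Summing over all $v\in R$ counts each edge of $G$ with both endpoints red exactly twice, so $\sum_{v\in R}k_v=2|RR|$, which is even. Reducing modulo $2$ and using the dictionary above, $\sum_{v\in R}k_v\equiv |\{v\in R:\deg(v)\equiv 3\pmod 4\}|\pmod 2$, so the number of red vertices of degree congruent to $3$ modulo $4$ is even. The identical argument with $B$ in place of $R$ (using $|N(v)\cap B|=\tfrac12(\deg(v)-1)=k_v$ for blue $v$ and $\sum_{v\in B}k_v=2|BB|$) shows the number of blue vertices of degree congruent to $3$ modulo $4$ is even.

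Finally, for the last statement I would combine the two counts: the total number of vertices of degree congruent to $3$ modulo $4$ is the sum of the red count and the blue count, hence even. Since $|V(G)|$ is even by Observation~\ref{obs:odddegree} and every degree is odd, the number of vertices of degree congruent to $1$ modulo $4$ equals $|V(G)|$ minus the number of degree congruent to $3$ modulo $4$, a difference of two even numbers, hence even.

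I do not expect a serious obstacle here; the only points requiring care are the bookkeeping in the parity correspondence between $\tfrac12(\deg(v)-1)\bmod 2$ and $\deg(v)\bmod 4$, and the observation that $\sum_{v\in R}|N(v)\cap R|$ double-counts the edges \emph{inside} $R$ rather than all edges incident to $R$. As an alternative, the red--red count could be obtained from the incidence matrix $M$ used in the proof of Theorem~\ref{degree-thm}, but the direct vertex-by-vertex summation seems cleanest.
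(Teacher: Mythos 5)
Your proposal is correct and is essentially the paper's argument: both rest on the identity $2|RR|=\sum_{v\in R}\tfrac{1}{2}(\deg(v)-1)$ (and its blue analogue) obtained from balance of closed neighborhoods, followed by a parity reduction, and both derive the final claim from $|V(G)|$ being even with all degrees odd. The only cosmetic difference is that you reduce the sum modulo $2$ directly, whereas the paper works modulo $4$ via $4|BB|+|B|=\sum_{w\in B}\deg(w)$.
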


\begin{proof} Let $G$ be a graph and  $(R,B)$ be a CNB-coloring of $G$. For each blue vertex $w$, let $\deg_B(w)$ be its number of blue neighbors and $\deg_R(w)$ be its number of red neighbors. Then $\deg(w)=
\deg_B(w)+\deg_R(w)$, and, since $(R,B)$ is a CNB-coloring and the color of $w$ is blue, $\deg_R(w)=1+\deg_B(w)$. Hence $\deg(w) = 2\deg_B(w)+1$. Counting only  edges in $G$ with two blue endpoints, we have that $2|BB|=\sum_{w\in B} \deg_B(w)= \sum_{w\in B}(\deg(w)-1)/2$. Thus, $4|BB|=\sum_{w\in B}(\deg(w)-1)=(\sum_{w\in B}\deg(w)) -|B|$, and we have
$4|BB|+|B|=\sum_{w\in B}\deg(w)$. Thus,  $|B|\equiv \sum_{w\in B}\deg(w) \pmod 4$.

Since $G$ is a CNBC graph, every vertex has odd degree. Let $j$ be the number of blue vertices whose degree is congruent to 1 modulo 4 and $k$ be the number of blue vertices whose degree is congruent to 3 modulo 4. Then $|B|=j+k$, and we have $|B|\equiv \sum_{w\in B}\deg(w)\equiv j+3k \pmod 4$. Hence $j+k\equiv j+3k\pmod 4$, and thus $0\equiv 2k \pmod 4$. Hence, $k$ is even. Similarly, we get that the number of red vertices whose degree is congruent to 3 modulo 4 is also even. 
By Observation~\ref{obs:odddegree}, 
$|V(G)|$ is even and every vertex has odd degree, so the number of vertices of $G$ whose degree is congruent to 1 modulo 4 is also even.
\end{proof}

\section {Trees} \label{trees}

In this section, we characterize CNBC trees. We begin with some results about the number of leaves incident to a vertex and the number of vertices in an CNBC tree.

In any CNB-coloring of a graph, each leaf must have the opposite color from that of its unique neighbor. Thus we have the following observation.

\begin{obs} \label{obs:pendants}
  Every vertex in a  CNBC graph is adjacent to at most   $(\deg(v)+1)/2$ leaves.
\end{obs}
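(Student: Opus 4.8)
\textbf{Proof plan for Observation~\ref{obs:pendants}.}

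The statement to prove is that in a CNBC graph, every vertex $v$ is adjacent to at most $(\deg(v)+1)/2$ leaves. The plan is to fix a CNB-coloring $(R,B)$ of $G$ and argue locally at $v$. First I would recall the remark made just before the observation: each leaf receives the color opposite to that of its unique neighbor. So every leaf adjacent to $v$ has the color opposite to $v$'s color; say $v$ is blue, so every such leaf is red. Let $\ell$ denote the number of leaves adjacent to $v$ and consider the closed neighborhood $N[v]$, which has size $\deg(v)+1$. Since $(R,B)$ is a CNB-coloring, $N[v]$ contains exactly $(\deg(v)+1)/2$ red vertices and $(\deg(v)+1)/2$ blue vertices; in particular $\deg(v)$ is odd by Observation~\ref{obs:odddegree}, so this count is an integer.

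The key step is then simply the inclusion: all $\ell$ leaves adjacent to $v$ lie in $N[v]$ and are red, so $\ell$ is at most the total number of red vertices in $N[v]$, which is $(\deg(v)+1)/2$. That gives $\ell \le (\deg(v)+1)/2$, as desired. The symmetric case where $v$ is red is identical with the colors swapped.

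There is essentially no obstacle here — the only thing to be careful about is making explicit the step that leaves adjacent to $v$ all have the same (opposite) color, which is exactly the content of the sentence preceding the observation and follows because a leaf's closed neighborhood is just the edge joining it to its neighbor, which must be balanced. So the proof is a one-line consequence of the balance condition at $v$ together with this coloring fact about leaves.
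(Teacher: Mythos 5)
Your proof is correct and follows exactly the argument the paper intends: the leaves adjacent to $v$ all receive the color opposite to $v$'s, they all lie in $N[v]$, and the balance condition caps the number of vertices of that color in $N[v]$ at $(\deg(v)+1)/2$. This matches the paper's (implicit) justification, which it sketches in the sentence preceding the observation.
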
 

\noindent For example, $H_6$ in Example~\ref{h6-tree} has two vertices of degree 3, each adjacent to two leaves, and $\frac{3+1}{2}=2$. 

Using our counting results, we prove the following theorem about the number of vertices in any  CNBC tree. 

\begin{thm}
If $T$ is a CNBC tree, then $|V(T)|\equiv 2\pmod 4$.
\label{num-vert-in-tree-thm}
\end{thm}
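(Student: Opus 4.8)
The plan is to invoke Theorem~\ref{parity-thm}, which already tells us that a CNBC graph $G$ satisfies $|V(G)|\equiv 0\pmod 4$ when $|E(G)|$ is even and $|V(G)|\equiv 2\pmod 4$ when $|E(G)|$ is odd. So for a tree $T$ it suffices to show that $|E(T)|$ is odd. Since $T$ is a tree, $|E(T)|=|V(T)|-1$, so I need to argue that $|V(T)|$ is even — but that is already guaranteed by Observation~\ref{obs:odddegree}, since every vertex of a CNBC graph has odd degree, forcing an even number of vertices. Hence $|V(T)|$ is even, $|E(T)|=|V(T)|-1$ is odd, and Theorem~\ref{parity-thm} gives $|V(T)|\equiv 2\pmod 4$.

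Concretely, the steps in order are: (1) recall from Observation~\ref{obs:odddegree} that $|V(T)|$ is even; (2) use the tree identity $|E(T)|=|V(T)|-1$ to conclude $|E(T)|$ is odd; (3) apply Theorem~\ref{parity-thm} with $|E(T)|$ odd to obtain $|V(T)|\equiv 2\pmod 4$. This is a short deduction from results already established, so there is essentially no technical obstacle; the only thing to be careful about is making sure the two ingredients (evenness of the order, and the edge count of a tree) are combined in the right direction before feeding them into Theorem~\ref{parity-thm}.

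If one preferred a self-contained argument not routing through Theorem~\ref{parity-thm}, one could instead argue directly using Theorem~\ref{degree-thm}: with $x=\sum_{v\in R}\deg(v)$ and $y=\sum_{v\in B}\deg(v)$ one has $|R|+x=|B|+y$ and $x+y=2|E(T)|=2(|V(T)|-1)$; combining these with the parity facts that all degrees are odd (so $x\equiv|R|$ and $y\equiv|B|\pmod 2$) and $|R|,|B|$ have equal parity yields $\tfrac12|V(T)|$ odd, hence $|V(T)|\equiv 2\pmod 4$. But since Theorem~\ref{parity-thm} is available, the first approach is cleanest and is the one I would write up. The main ``obstacle'' is really just bookkeeping — recognizing that for trees the edge count is pinned down, which immediately selects the second case of Theorem~\ref{parity-thm}.
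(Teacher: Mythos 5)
Your proposal is correct and is essentially identical to the paper's own proof: the paper also notes from Observation~\ref{obs:odddegree} that $|V(T)|$ is even, uses $|E(T)|=|V(T)|-1$ to conclude $|E(T)|$ is odd, and then invokes Theorem~\ref{parity-thm}. No issues.
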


\begin{proof}
	Let $T$ be a tree that is a CNBC graph. By Observation~\ref{obs:odddegree}, tree $T$ has an even number of vertices, and since $T$ is a tree we also know $|E(T)| = |V(T)| - 1$.  Thus $T$ has an odd number of edges and the result follows from Theorem~\ref{parity-thm}.
	\end{proof}

Recall the definition of a 4-vertex addition at $z$ from Section~\ref{sub:unequ}.
The following theorem gives a constructive characterization of which trees are CNBC graphs.  
	
	\begin{thm}
	A tree is a CNBC graph if and only if it can be constructed from $K_2$ by a sequence of $4$-vertex additions.
 \label{tree-charac-thm}
	\end{thm}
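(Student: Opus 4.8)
The plan is to prove both directions of the biconditional, with the backward direction being almost immediate and the forward direction requiring an inductive argument on the structure of the tree. For the backward direction: $K_2$ is a CNBC graph, and by Proposition~\ref{prop:4vt-add} a $4$-vertex addition preserves the property of being a CNBC graph. So any tree built from $K_2$ by a sequence of $4$-vertex additions is a CNBC graph; we should also note that the result of such additions is indeed a tree (each addition attaches a small tree at a single vertex $z$, introducing no cycles and keeping the graph connected).

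For the forward direction, the plan is to induct on $|V(T)|$. The base case is $|V(T)| = 2$ (by Theorem~\ref{num-vert-in-tree-thm} we know $|V(T)| \equiv 2 \pmod 4$, so the smallest case has two vertices), and the only tree on two vertices is $K_2$. For the inductive step, given a CNBC tree $T$ with more than two vertices and a fixed CNB-coloring, I would locate a ``pendant configuration'' that matches the reverse of a $4$-vertex addition: namely vertices $v, x, w_1, w_2$ where $v$ is adjacent to a vertex $z$ (and to $w_1, w_2$), $x$ is a leaf adjacent to $z$, and $w_1, w_2$ are leaves adjacent to $v$, with $v$ the same color as $z$ and $x, w_1, w_2$ the opposite color. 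To find this, take a longest path in $T$; its endpoints are leaves. Let $v$ be the neighbor of a deepest leaf $w_1$. Since $v$ lies near the end of a longest path, all neighbors of $v$ except one (its ``parent'' $z$ toward the rest of the tree) are leaves. By Observation~\ref{obs:odddegree} $v$ has odd degree, and by Observation~\ref{obs:pendants} $v$ is adjacent to at most $(\deg(v)+1)/2$ leaves; combined with the fact that $v$ has at most one non-leaf neighbor, one argues $\deg(v) = 3$, so $v$ has exactly two leaf neighbors $w_1, w_2$ and one other neighbor $z$. Then examining the closed neighborhood of $v$ and of $z$: balancedness at $v$ forces $z$ to be $v$'s color (since $w_1, w_2$ are both the color opposite to $v$, as leaves), and balancedness at $z$ forces $z$ to have another neighbor that is a leaf of the color opposite to $z$ — call it $x$. (Here one must check that $z$ does have such a leaf neighbor $x$: the closed neighborhood count at $z$ must balance, $v$ contributes to $z$'s color, so some neighbor must offset it; a careful local count shows the offsetting vertex can be taken to be a leaf, or else one adjusts which leaf plays the role of $x$.) Deleting $v, x, w_1, w_2$ yields a smaller tree $T'$; one checks the restricted coloring is still a CNB-coloring of $T'$ (the only vertex whose closed neighborhood changed is $z$, and removing $v$ of $z$'s color together with $x$ of the opposite color keeps $z$ balanced), so by induction $T'$ is built from $K_2$ by $4$-vertex additions, and appending the $4$-vertex addition at $z$ recovers $T$.

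The main obstacle is the bookkeeping in the inductive step: correctly identifying the four vertices to strip off and verifying that $z$ genuinely has a leaf neighbor $x$ of the appropriate color available. The subtlety is that $z$ could have several leaf neighbors or none among its ``extra'' neighbors, and one must ensure the local color counts at $z$ actually produce a leaf of the opposite color to $z$ — this may require case analysis on $\deg(z)$ and on how many leaves hang off $z$, using Observation~\ref{obs:pendants} again. Once the right four vertices are isolated and the coloring is seen to restrict properly, the rest is a routine induction closed by Proposition~\ref{prop:4vt-add} for the converse.
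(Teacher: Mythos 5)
Your overall strategy is the same as the paper's: take a longest path, use parity of degrees and Observation~\ref{obs:pendants} to force the penultimate vertex $v$ (the paper's $v_2$) to have degree $3$ with exactly two leaf children, show its remaining neighbor $z$ (the paper's $v_3$) has a leaf child $x$, delete the four vertices, and induct (the paper phrases the induction as applying the argument $k$ times, which is the same thing). The backward direction, the identification of $\deg(v)=3$, the color forcing at $v$ and $z$, and the check that the restricted coloring on the smaller tree is still a CNB-coloring are all handled correctly.

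The one genuine gap is exactly the step you flag: proving that $z$ has a leaf neighbor. Your proposed justification --- ``the closed neighborhood count at $z$ must balance \ldots a careful local count shows the offsetting vertex can be taken to be a leaf'' --- does not work, because a count at $z$ alone only shows that some neighbor of $z$ has the color opposite to $z$; that neighbor could be $v_4$, the continuation of the longest path, and nothing local at $z$ forces any offsetting neighbor to be a leaf. (Also, ``adjusting which leaf plays the role of $x$'' is vacuous if $z$ has no leaf neighbor at all.) The paper's argument is a radius-two argument, not a local count: assume $z$ has no leaf neighbor; since $\deg(z)$ is odd and $z$ already has the two neighbors $v$ and $v_4$, it has a third neighbor $y$ off the path. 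By maximality of the path, every neighbor of $y$ other than $z$ is a leaf; odd degree plus Observation~\ref{obs:pendants} then force $\deg(y)=3$ with exactly two leaf children, and balance of $N[y]$ forces $y$ to have the same color as $z$. The same holds for every off-path non-leaf neighbor of $z$, so $N[z]$ contains at least three vertices of $z$'s color ($z$, $v$, $y$) and at most one of the opposite color ($v_4$), contradicting balance. You need to supply this argument to close the proof; the rest of your outline then goes through as written.
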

	
	\begin{proof}
	Suppose $T$ is constructed from $K_2$ by a sequence of $4$-vertex additions. The operation of a $4$-vertex addition maintains connectivity and does not create cycles, so $T$ is a tree.  We know that $K_2$ is  CNBC graph and Proposition~\ref{four-vertex-addition-lemma} implies that $T$ is also a CNBC graph.
	
	For the converse, suppose that $T$ is a CNBC tree and fix a CNB-coloring $(R,B)$ of $T$.    We know that $|V(T)|  \equiv 2  \pmod 4$  by Theorem~\ref{num-vert-in-tree-thm},  so we can let $|V(T)| = 4k+2$  where $k$ is an integer.    If $k = 0$ then $T = K_2$ and the result follows (with no 	$4$-vertex additions needed).  Otherwise, we may assume $k \ge 1$ so $|V(T)| \ge 6$.

Consider a    path $P=v_1, v_2, v_3, \ldots, v_{\ell}$ of maximum length in $T$, thus $v_1$ and $v_{\ell}$ are leaves.  Note that $\ell \ge 4$ since the star $K_{1,m}$ is not a CNBC graph for $m \ge 3$. By the maximality of $P$, all neighbors of $v_2$ (other than $v_3$) are leaves.  We know $\deg(v_2)$ is odd by Observation~\ref{obs:odddegree}, so $v_2$ has a leaf neighbor $u$ with $u \neq v_1$. 
 By Observation~\ref{obs:pendants}, $v_2$ is not adjacent to any other leaves, so $\deg(v_2) = 3$. Without loss of generality, we may assume $v_1$ is red, thus $v_2$ is blue and $u$ is red. Since $\deg(v_2) = 3$, and two of its neighbors are red, the third neighbor, $v_3$ must be blue.

We next claim that $v_3$ is adjacent to a leaf.  For a contradiction, assume $v_3$ has no leaf neighbor.  It must have at least one neighbor in addition to $v_2$ and $v_4$ since $\deg(v_3)$ is odd.  Let $y$ be such a neighbor and since $y$ is not a leaf, it has a neighbor $y' \neq v_3$.  Vertex $y'$ must be a leaf by the maximality of $P$.  However $\deg(y)$ is odd, so $y$ has another leaf neighbor $y''$.  It cannot have a third leaf neighbor by  Observation~\ref{obs:pendants}.  Therefore, vertex $y$ has degree 3 and  is adjacent to the blue vertex $v_3$ and two leaves $y'$ and $y''$, so $y$ must be blue and $y'$ and $y''$ must be red.  By the same reasoning, any non-leaf neighbor of $v_3$ (other than $v_4$) must be blue.  Thus $N[v_3]$ contains at least three blue vertices ($v_2, v_3, y$) and at most one red vertex ($v_4$).  This contradicts our assumption that our coloring is a CNB-coloring.  Thus $v_3$ has a leaf neighbor $x$ and $x$ is red.

Let $T_1$ be the tree resulting from removing the four vertices $v_1, v_2, u,x$ and their incident edges from $T$.  Thus $|V(T_1)| = |V(T)| - 4 = 4k-2$.  The colors remaining on the vertices of $T_1$ give a CNB-coloring of $T_1$ since we have removed one blue neighbor and one red neighbor from $v_3$, and no incident vertices from any other vertex of $T_1$.      

Tree $T$ is obtained from $T_1$ by a $4$-vertex addition at $v_3$.  
Apply the above argument $k$ times to get a sequence of trees $T$, $T_1$, $T_2$, \ldots, $T_k$ where $T_i$ is obtained from $T_{i+1}$ by a $4$-vertex addition and $|V(T_{i+1})| = |V(T_i)| - 4 $.   Thus $|V(T_k)| = |V(T)| - 4k = 2$, so $T_k = K_2$ and $T$ can be constructed from $K_2$ by a sequence of $4$-vertex additions.
\end{proof}

We present several corollaries of Theorem~\ref{tree-charac-thm}. The first two are direct consequences of the proof of Theorem~\ref{tree-charac-thm}.

\begin{cor}
    Every CNBC tree is of the form $T_n$ where $T_1=K_2$ and for $n>1$, $T_n$ is obtained from a tree $T_{n-1}$ by attaching to a vertex $v\in V(T_{n-1})$ an edge to a leaf and an edge to the central vertex of a path $P_3$.
\end{cor}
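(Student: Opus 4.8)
The plan is to observe that this corollary is essentially a restatement of Theorem~\ref{tree-charac-thm}, with the $4$-vertex addition operation written out explicitly. First I would unwind Definition~\ref{four-vtx-addn}: a $4$-vertex addition at a vertex $z$ attaches to $z$ a new vertex $x$ (which becomes a leaf, since its only neighbor is $z$) and a new vertex $v$, and attaches to $v$ two further new vertices $w_1$ and $w_2$. The edges $w_1v$ and $vw_2$ make $w_1,v,w_2$ a path $P_3$ whose central vertex is $v$, and the edge $zv$ joins $z$ to that center. Hence a $4$-vertex addition at $z$ is exactly the operation ``attach to $z$ an edge to a leaf and an edge to the central vertex of a path $P_3$,'' which is the operation named in the corollary.

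Next I would invoke Theorem~\ref{tree-charac-thm}: a tree $T$ is a CNBC graph if and only if it can be built from $K_2$ by a sequence of, say $k\ge 0$, $4$-vertex additions. I would then set $T_1=K_2$ and, for $1\le i\le k$, let $T_{i+1}$ be the tree obtained from $T_i$ by performing the $i$th addition in this sequence; by the previous paragraph each such step is precisely the operation described in the statement, and $T=T_{k+1}$, so $T$ has the claimed form. The boundary case $k=0$ simply gives $T=T_1=K_2$ with no additions.

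I expect no genuine obstacle here: the only thing requiring (minimal) care is the bookkeeping that matches the vertices $x,v,w_1,w_2$ of Definition~\ref{four-vtx-addn} with the ``leaf'' and the ``central vertex of a $P_3$'' in the statement, and the observation that the resulting graph is indeed a tree. The latter is already contained in the proof of Theorem~\ref{tree-charac-thm}, since a $4$-vertex addition preserves connectivity and creates no cycle. All of the substance of the result — that every CNBC tree really does decompose this way — is supplied by Theorem~\ref{tree-charac-thm}, so this corollary is a short, direct consequence.
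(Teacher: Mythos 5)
Your proposal is correct and matches the paper, which likewise treats this corollary as a direct restatement of Theorem~\ref{tree-charac-thm} obtained by unwinding Definition~\ref{four-vtx-addn} (the new vertex $x$ is the attached leaf, and $w_1,v,w_2$ form the $P_3$ with center $v$ joined to the chosen vertex). No further argument is needed.
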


Up to isomorphism, there is only one way to make a 4-vertex addition to $K_2$. Thus, graph $H_6$ in Figure~\ref{fig-H6} is the only CNBC tree with 6 vertices, up to isomorphism. 

\begin{cor}  \label{arb-more-red-tree}
    For any integer $k$, there exists a CNBC tree for which  $|R|-|B| = 2k$. 
\end{cor}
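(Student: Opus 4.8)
The plan is to use the $4$-vertex addition machinery developed in this section together with Corollary~\ref{arb-more-red-cor} (or just Proposition~\ref{prop:4vt-add} applied iteratively) to build the required tree directly. Recall from Definition~\ref{four-vtx-addn} that a $4$-vertex addition at a vertex $z$ adds one vertex of $z$'s color and three vertices of the opposite color, so it changes $|R|-|B|$ by $\pm 2$ depending on whether the addition is made at a blue or a red vertex, and by Proposition~\ref{prop:4vt-add} it preserves the property of being a CNBC graph. Moreover, since a $4$-vertex addition maintains connectivity and creates no cycles (as noted in the proof of Theorem~\ref{tree-charac-thm}), starting from $K_2$ and performing any sequence of such additions yields a CNBC tree.

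First I would handle $k=0$: the tree $K_2$ itself has $|R|-|B|=0$. For $k>0$, I would start with $K_2$, fix its CNB-coloring with one red vertex $r$ and one blue vertex $b$, and then perform $|k|$ successive $4$-vertex additions, each time choosing the attachment vertex $z$ to be a blue vertex if $k>0$ (so that each addition adds one blue and three red vertices, increasing $|R|-|B|$ by $2$) and a red vertex if $k<0$ (each addition then increases $|B|-|R|$ by $2$). By symmetry it suffices to treat $k>0$. After the $i$-th addition the current graph is a CNBC tree with $|R|-|B| = 2i$, so after $k$ additions we obtain a CNBC tree $T_k$ with $|R|-|B| = 2k$, as required. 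One should note that at every stage there is always a blue (respectively red) vertex available to attach at — indeed after the first addition at $b$ the vertex $b$ itself remains blue (its color is unchanged and it acquires one new red neighbor and the added ``$v$'' vertex of its own color), and in general each new graph contains blue vertices, for instance the original vertex $b$ — so the construction never stalls.

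The only mild subtlety, and the step I expect to need the most care, is verifying that a suitable attachment vertex of the desired color always exists, but this is immediate since $4$-vertex additions never recolor existing vertices and the starting graph $K_2$ already contains a vertex of each color; hence after any number of additions both color classes are nonempty. Everything else is a routine induction on $k$ using Proposition~\ref{prop:4vt-add}. Alternatively, this corollary follows even more quickly by quoting Corollary~\ref{arb-more-red-cor} (which produces CNBC graphs with arbitrarily more red than blue vertices by repeated $4$-vertex additions at blue vertices) and observing that when the process is started from $K_2$ the resulting graphs are trees and that $|R|-|B|$ increases by exactly $2$ at each step, so every even value $2k$ with $k\ge 0$ is attained; the negative values $2k$ with $k<0$ come from adding at red vertices instead, or equivalently by swapping the two color classes.
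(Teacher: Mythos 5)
Your proof is correct and follows essentially the same route as the paper's: start from $K_2$ and perform $k$ successive $4$-vertex additions at blue vertices (red vertices for negative $k$), invoking Proposition~\ref{prop:4vt-add} to preserve the CNBC property and shift $|R|-|B|$ by $2$ each time. Your explicit treatment of $k\le 0$ and of the existence of an attachment vertex of the desired color are minor elaborations the paper leaves implicit.
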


\begin{proof}
  Starting with $K_2$ and a CNB-coloring of $K_2$, make $k$ new 4-vertex-additions to a blue vertex. By Proposition~\ref{prop:4vt-add}, and the proof of Theorem~\ref{tree-charac-thm}, the resulting graph will have $2k$ more red than blue vertices. 
\end{proof}

Note that $H_6$ has either 4 red and 2 blue vertices or 4 blue and 2 red vertices, compared to $K_2$ with 1 red and 1 blue. For any graph $G$ we write $\Delta(G)$ for the maximum degree of a vertex in $G$.

\begin{cor} \label{tree-max-deg} 
If $T$ is a CNBC tree and $|V(T)|=n$, then $\Delta(T)\leq \frac{n}{2}$. 
\end{cor}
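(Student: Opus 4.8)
The plan is to fix a CNB-coloring of $T$, let $v$ be a vertex of maximum degree $d = \Delta(T)$, and prove the equivalent bound $n \geq 2d$. Since $T$ is a tree, deleting $v$ yields exactly $d$ connected components $T_1, \dots, T_d$, with each component containing exactly one neighbor of $v$ (two neighbors in one component would create a cycle through $v$). Hence $n = 1 + \sum_{i=1}^{d} |V(T_i)|$. I would then bound $|V(T_i)|$ from below according to whether the neighbor of $v$ lying in $T_i$ is a leaf of $T$: if it is a leaf, then $|V(T_i)| = 1$; if it is not a leaf, then by Observation~\ref{obs:odddegree} its degree in $T$ is odd and not $1$, hence at least $3$, so it has at least two neighbors inside $T_i$, giving $|V(T_i)| \geq 3$.

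The key leverage is Observation~\ref{obs:pendants}: $v$ is adjacent to at most $(d+1)/2$ leaves. So at most $(d+1)/2$ of the components have size $1$, and the remaining at least $d - (d+1)/2 = (d-1)/2$ have size at least $3$. Substituting these worst-case sizes,
$$n = 1 + \sum_{i=1}^{d} |V(T_i)| \;\geq\; 1 + \frac{d+1}{2}\cdot 1 + \frac{d-1}{2}\cdot 3 \;=\; 2d,$$
so $\Delta(T) = d \leq n/2$, as claimed. (The degenerate case $d = 1$, where $T = K_2$, satisfies the same inequality, and one can note $H_6$ shows the bound is sharp.)

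I do not anticipate a serious obstacle; the two points needing a little care are (i) confirming that a non-leaf neighbor of $v$ contributes a component of size at least $3$, which rests squarely on the oddness of all degrees from Observation~\ref{obs:odddegree} together with exactly one incident edge of that neighbor leading back to $v$; and (ii) setting up the count so as to genuinely minimize $n$ over the admissible split of leaf versus non-leaf neighbors of $v$ — this is why the estimate uses exactly $(d+1)/2$ leaf neighbors, the maximum permitted.
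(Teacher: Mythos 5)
Your proof is correct, and it takes a genuinely different route from the paper. The paper derives this corollary from its constructive characterization (Theorem~\ref{tree-charac-thm}): it inducts on the number of $4$-vertex additions, observing that each addition raises the degree of exactly one existing vertex by $2$ while raising $n$ by $4$, so the invariant $\Delta \le n/2$ is preserved. You instead argue directly at a vertex $v$ of maximum degree $d$: deleting $v$ splits $T$ into $d$ components, each of size $1$ (leaf neighbor) or at least $3$ (non-leaf neighbor, whose degree is odd and hence at least $3$ by Observation~\ref{obs:odddegree}), and Observation~\ref{obs:pendants} caps the number of size-$1$ components at $(d+1)/2$, giving $n \ge 1 + \frac{d+1}{2} + 3\cdot\frac{d-1}{2} = 2d$. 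The arithmetic checks out (note $d$ is odd, so $(d+1)/2$ is an integer, and your bound is minimized precisely at the maximum allowed number of leaf neighbors). What each approach buys: the paper's induction is essentially free once the characterization theorem is in hand, and the same template immediately yields the diameter bound in the following corollary; your argument is self-contained, needing only the two elementary observations rather than the full structure theorem, and it localizes where equality occurs (exactly $(d+1)/2$ leaf neighbors and every remaining component of size exactly $3$, as in $H_6$).
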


\begin{proof}
Since $T$ is a CNBC tree, by Theorem~\ref{num-vert-in-tree-thm}, then $n=2+4m$ for some integer $m$. 
We prove the corollary by induction on the number of 4-vertex-additions made to $K_2$. The degree of each vertex in $K_2$ is 1, which is half of the order of $K_2$. This shows that the base case of the induction holds. 

 Now suppose that any CNBC tree $T$ that is created by $m$ 4-vertex-additions to $K_2$ satisfies $\Delta(T)\leq 1+2m$. When we make one additional 4-vertex-addition to such a tree $T$, then the result has $2+4(m+1)=6+4m$ vertices. By the proof of Theorem~\ref{tree-charac-thm}, exactly one vertex $z$ of $T$ has a larger degree in the new tree, and its degree increases by 2. Half of $6+4m$ is $3+2m=2+(1+2m)$. Thus, the degree of $z$ is less than or equal to half the number of vertices in the new tree. Each new vertex added in the 4-vertex addition has degree less than or equal to 3, which is less than half of the number of vertices in the new tree, which has at least 6 vertices. 
\end{proof}

\begin{cor} 
Let $T$ be a CNBC tree of order $n$. Then the diameter of $T$ satisfies $\mbox{\rm diam}(T)\leq \frac{n}{2}$. 
\end{cor}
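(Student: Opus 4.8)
The plan is to imitate the inductive argument used for Corollary~\ref{tree-max-deg}, inducting on the number $m$ of $4$-vertex additions needed to build $T$ from $K_2$; Theorem~\ref{tree-charac-thm} guarantees such a construction exists, and Theorem~\ref{num-vert-in-tree-thm} gives $n=4m+2$. For the base case $m=0$ we have $T=K_2$, so $\mathrm{diam}(T)=1=n/2$, and the inequality holds.

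For the inductive step, suppose $T$ arises from a CNBC tree $T'$ on $n-4$ vertices by a $4$-vertex addition at a vertex $z\in V(T')$, adding $v,x$ adjacent to $z$ and $w_1,w_2$ adjacent to $v$, and assume $\mathrm{diam}(T')\le (n-4)/2$. The key observation I would record is structural: the four new vertices together with $z$ form a pendant subtree hanging off $z$, so that in $T$ the vertex $z$ separates $\{v,x,w_1,w_2\}$ from $V(T')\setminus\{z\}$. Hence distances between vertices of $T'$ are unchanged in $T$, the distance from $z$ to each new vertex is at most $2$, and the distance between any two new vertices is at most $3$. Given this, I would bound $d_T(a,b)$ for an arbitrary pair $a,b\in V(T)$ in three cases: if $a,b\in V(T')$ then $d_T(a,b)=d_{T'}(a,b)\le (n-4)/2\le n/2$; if $a,b\in\{v,x,w_1,w_2\}$ then $d_T(a,b)\le 3\le n/2$ since $m\ge 1$ forces $n\ge 6$; and if exactly one of them is new, say $a$, then every $a$--$b$ path passes through $z$, so $d_T(a,b)=d_T(a,z)+d_T(z,b)\le 2+d_{T'}(z,b)\le 2+(n-4)/2=n/2$. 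Taking the maximum over all pairs yields $\mathrm{diam}(T)\le n/2$, completing the induction.

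The only point requiring care is the pendant-subtree observation — that the added vertices create no shortcuts between old vertices and that shortest paths to the new vertices must route through $z$ — but this is immediate from the definition of a $4$-vertex addition, so I do not anticipate a genuine obstacle; the rest of the argument is routine case-checking.
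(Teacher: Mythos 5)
Your proposal is correct and follows essentially the same route as the paper's proof: induction on the number of $4$-vertex additions given by Theorem~\ref{tree-charac-thm}, using that each addition hangs a pendant subtree at $z$ whose vertices lie within distance $2$ of $z$, so the diameter grows by at most $2$ while $n$ grows by $4$. Your explicit three-case analysis just spells out the paper's one-line claim that paths to the new vertices must pass through $z$; there is no gap.
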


\begin{proof} 
Since $T$ is a CNBC tree, we know  $n=2+4m$ by Theorem~\ref{num-vert-in-tree-thm}.
In a 4-vertex-addition at a vertex $z$, the paths from any vertex of $T$ to any of the $4$ new vertices contains $z$ and the distance from $z$ to any of the new vertices is $1$ or $2$.  Thus, each 4-vertex-addition increases the diameter of the resulting tree by at most 2. Since the diameter of $K_2$ is 1, by induction, a tree that is formed by $m$ 4-vertex-additions has diameter at most $1+2m$. 
\end{proof} 
\section{Regular graphs} \label{sect:regular} 

In this section, we consider graphs with CNB-colorings for which the number of red vertices equals the number of blue vertices, including regular graphs and some special families of graphs that are regular.

\subsection{Counting results for regular graphs}\label{sub:reg-count}

We provide results about regular graphs in this section. 

	\begin{thm}
 \label{thm:blue equals red} 
 If $(R,B)$ is a CNB-coloring of an $r$-regular graph $G$, then $|R| = |B|$, $|RB| = \frac{(r+1)|V(G)|}{4}$, and $|RR| = |BB| = \frac{(r-1)|V(G)|}{8}$.

	\end{thm}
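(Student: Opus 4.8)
The plan is to combine the counting identity of Theorem~\ref{degree-thm} with a direct count of the edges between the two color classes. First I would record the basic parity facts: since $G$ is a CNBC graph, Observation~\ref{obs:odddegree} forces every vertex to have odd degree, so $r$ is odd and $n:=|V(G)|$ is even; in particular $(r+1)/2$ and $(r-1)/2$ are integers, and each closed neighborhood has exactly $r+1$ vertices. For the first claim, I would apply Theorem~\ref{degree-thm}, namely $|R| + \sum_{v\in R}\deg(v) = |B| + \sum_{v\in B}\deg(v)$; since $G$ is $r$-regular this becomes $(r+1)|R| = (r+1)|B|$, hence $|R|=|B|$, and combining with $|R|+|B|=n$ gives $|R|=|B|=n/2$.

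For the formula for $|RB|$, I would count the red–blue edges from the blue side. Fix a blue vertex $w$. Its closed neighborhood has $r+1$ vertices and is balanced, so it contains exactly $(r+1)/2$ red vertices, none of which is $w$; hence $w$ has exactly $(r+1)/2$ red neighbors. Summing over all vertices of $B$ counts each red–blue edge exactly once, so
\[
|RB| \;=\; |B|\cdot\frac{r+1}{2} \;=\; \frac{(r+1)n}{4}.
\]
(One could equally well count from the red side and obtain the same value, as a consistency check.)

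For the last claim, Corollary~\ref{redequblue} gives $|RR|=|BB|$ because $|R|=|B|$. The degree-sum formula gives $|E(G)| = rn/2$, and also $|E(G)| = |RR|+|BB|+|RB|$, so $|RR|+|BB| = \tfrac{rn}{2} - \tfrac{(r+1)n}{4} = \tfrac{(r-1)n}{4}$; dividing by $2$ yields $|RR|=|BB|=\tfrac{(r-1)n}{8}$. There is no serious obstacle here; the only points requiring care are invoking Observation~\ref{obs:odddegree} so that $r$ is odd (making the halved quantities meaningful) and being precise that the blue-side tally counts each $RB$ edge exactly once. One might additionally remark that these equalities force $8 \mid (r-1)n$, which is automatic since $|RR|$ is a nonnegative integer.
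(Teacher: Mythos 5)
Your proof is correct. It rests on the same key observation as the paper's proof --- in a balanced closed neighborhood of size $r+1$, every vertex has $\frac{r+1}{2}$ neighbors of the opposite color and $\frac{r-1}{2}$ of its own color --- but you package two of the three claims differently. For $|R|=|B|$ the paper double-counts $|RB|$ (once at red endpoints, once at blue endpoints) and equates $|R|\frac{r+1}{2}=|B|\frac{r+1}{2}$, whereas you specialize Theorem~\ref{degree-thm} to the regular case to get $(r+1)|R|=(r+1)|B|$; for $|RR|=|BB|=\frac{(r-1)|V(G)|}{8}$ the paper computes the edge count of the subgraph induced by $R$ directly via $\frac{1}{2}|R|\cdot\frac{r-1}{2}$, whereas you invoke Corollary~\ref{redequblue} and subtract $|RB|$ from the total edge count $\frac{r|V(G)|}{2}$. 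Both routes are legitimate and of comparable length; yours leans more on the earlier counting results of Section~\ref{sect:count} (which precede this theorem, so there is no circularity), while the paper's is more self-contained within the proof itself. Your remark that $r$ must be odd by Observation~\ref{obs:odddegree} is a nice touch for making the halved quantities manifestly integral, though strictly speaking the balancedness of each closed neighborhood of size $r+1$ already forces $r$ odd without appeal to that observation.
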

	
\begin{proof}
Each red vertex has $\frac{r-1}{2}$ red neighbors and $\frac{r+1}{2}$ blue neighbors. First we count the edges with one red endpoint and one blue endpoint at their red endpoints, and get $|RB | =  \sum_{v \in R} \frac{r+1}{2}  =  |R| (\frac{r+1}{2}).$ When we count the same quantity at blue endpoints we get $|RB|=|B| (\frac{r+1}{2})$.  Thus $|R| = |B|= \frac{|V(G)|}{2}$  and  $|RB| =  \frac{(r+1)|V(G)|}{4}.$  

To count the edges with both endpoints red, consider the subgraph $H$ induced by the red vertices.  We have $|E(H)| = \frac{1}{2} \sum_{v \in R} \deg_H(v) = \frac{1}{2}  |R| (\frac{r-1}{2}) = \frac{1}{2} \left(\frac{|V(G)|}{2} \right)\left(\frac{r-1}{2}\right)  = \frac{(r-1)|V(G)|}{8}$.  Thus $|RR| = \frac{(r-1)|V(G)|}{8}$.
	 By symmetry we get the same result for $|BB|$.  
\end{proof}

 \begin{thm}
    \label{thm:blue equals red-nb} 
    If $(R,B)$ is an NB-coloring of an $r$-regular graph $G$, then $|R| = |B|$, $|RB| = \frac{r|V(G)|}{4}$, and $|RR| = |BB| = \frac{r|V(G)|}{8}$. 

\end{thm}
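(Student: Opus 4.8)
The plan is to mirror the proof of Theorem~\ref{thm:blue equals red} almost verbatim, adjusting the constants to account for the fact that an NB-coloring balances the \emph{open} neighborhood rather than the closed one. The key difference is in the split of each vertex's neighbors: in an NB-coloring, a red vertex of degree $r$ has exactly $\frac r2$ red neighbors and $\frac r2$ blue neighbors (so $r$ must be even, which is consistent with Observation~\ref{obs:odddegree}), whereas in the CNB case the vertex itself tipped the count to $\frac{r-1}{2}$ and $\frac{r+1}{2}$. So first I would record this observation about the neighbor-split.

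Next, to get $|R|=|B|$ and the value of $|RB|$, I would double-count the red--blue edges by their endpoints: summing $\frac r2$ over all red vertices gives $|RB| = |R|\cdot\frac r2$, and summing $\frac r2$ over all blue vertices gives $|RB| = |B|\cdot\frac r2$. Since $r$ is even and positive (if $r=0$ the statement is trivial or vacuous, as $G$ has no edges and every coloring is an NB-coloring with $|RB|=0$, though one should note $|R|=|B|$ can fail there — but the intended reading is $r\ge 2$, or one simply restricts to that case), these two expressions force $|R|=|B| = \frac{|V(G)|}{2}$, and then $|RB| = \frac{r|V(G)|}{4}$.

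Finally, for $|RR|$, I would pass to the subgraph $H$ induced by the red vertices: each red vertex has $\frac r2$ red neighbors, so $\deg_H(v) = \frac r2$ for every $v\in R$, and hence $|RR| = |E(H)| = \frac12\sum_{v\in R}\deg_H(v) = \frac12\cdot\frac{|V(G)|}{2}\cdot\frac r2 = \frac{r|V(G)|}{8}$. The identical argument on the blue-induced subgraph gives $|BB| = \frac{r|V(G)|}{8}$, which also recovers the known fact $|RR|=|BB|$ from Corollary~\ref{nb:redequblue}.

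There is no real obstacle here — this is a routine adaptation. The only point requiring a moment's care is the degenerate case $r=0$ (and, in spirit, checking that the arithmetic divisibility claims, e.g. that $8\mid r|V(G)|$, are automatically consistent given that $r$ is even and $|V(G)|$ is even), but for $r\ge 2$ the double-counting argument goes through cleanly and the proof can reasonably be dispatched by saying it follows the proof of Theorem~\ref{thm:blue equals red} mutatis mutandis.
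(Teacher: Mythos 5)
Your proposal is correct and matches the paper's intent exactly: the paper's proof of this theorem consists of the single line ``The proof is similar to the proof of Theorem~\ref{thm:blue equals red},'' and your adaptation (each vertex now has $\frac r2$ neighbors of each color, then double-count $|RB|$ and count edges in the red-induced subgraph) is precisely that adaptation. Your side remark about the degenerate case $r=0$ (where every coloring is vacuously an NB-coloring and $|R|=|B|$ can fail) is a legitimate caveat the paper does not address, but it does not affect the substance.
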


\begin{proof}
The proof is similar to the proof of Theorem~\ref{thm:blue equals red}.
\end{proof}

\begin{cor}\label{cor:singly even}
If $G$ is an $r$-regular CNBC graph, then either $|V| \equiv 0 \pmod 4$ or $r \equiv 1 \pmod 4.$
\end{cor}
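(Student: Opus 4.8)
The plan is to use the exact count of monochromatic red edges supplied by Theorem~\ref{thm:blue equals red}, together with the trivial but crucial fact that this count is a non-negative integer. First I would fix a CNB-coloring $(R,B)$ of the $r$-regular CNBC graph $G$ and write $n=|V(G)|$. By Observation~\ref{obs:odddegree}, every vertex of a CNBC graph has odd degree, so $r$ is odd, and $n$ is even; set $n=2m$. Theorem~\ref{thm:blue equals red} then gives $|RR|=\frac{(r-1)n}{8}=\frac{(r-1)m}{4}$.

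The key step is that $|RR|$ is a count of edges, hence an integer, so $4\mid (r-1)m$. From here I would split into cases on $r \bmod 4$, using that $r$ is odd. If $r\equiv 1\pmod 4$, then the conclusion $r\equiv 1\pmod 4$ already holds and we are done. If instead $r\equiv 3\pmod 4$, then $r-1\equiv 2\pmod 4$, i.e.\ $r-1$ is twice an odd number, so the divisibility $4\mid (r-1)m$ forces $m$ to be even; then $n=2m\equiv 0\pmod 4$, giving the other alternative.

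I do not anticipate any real obstacle: once Theorem~\ref{thm:blue equals red} is in hand the argument is a short $2$-adic bookkeeping, and the only point requiring care is to note explicitly that $|RR|$ must be an integer (it counts edges). A slightly different route would instead apply Theorem~\ref{parity-thm} with $|E(G)|=rn/2$, splitting on the parity of $rn/2$, but the computation via $|RR|$ is the cleanest.
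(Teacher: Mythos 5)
Your proposal is correct and follows essentially the same route as the paper: both rest on the integrality of $|RR|=\frac{(r-1)|V(G)|}{8}$ from Theorem~\ref{thm:blue equals red}, with your version merely spelling out the $2$-adic case analysis that the paper compresses into one sentence.
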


\begin{proof}
By Theorem~\ref{thm:blue equals red}, for any CNB-coloring $(R,B)$ of $G$, we have $|RR|= \frac{(r-1)|V(G)|}{8}$, which therefore must be an integer.  Thus at least one of $|V|$ and $(r-1) $ must be a multiple of $4$.  \end{proof}

Several graph families that we consider are 3-regular. We record the counting facts for cubic CNBC graphs in the next corollary.

 \begin{cor} \label{3-reg-cor} Let $G$ be a CNBC graph that is regular of degree $3$. 
    Then $|V(G)| \equiv 0 \pmod 4$, and in any CNB-coloring $(R,B)$ of $G$, $|R|=|B|=\frac{|V(G)|}{2}$, and $|RB|=|V(G)|$, $|RR|=|BB|=\frac{|V(G)|}{4}$, and the monochromatic edges form a  matching of all vertices of $G$.
\end{cor}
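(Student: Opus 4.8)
The plan is to obtain everything by specializing the general regular-graph results already proved to the case $r=3$. First I would invoke Theorem~\ref{thm:blue equals red}: applying it to an $r$-regular CNBC graph with $r=3$ immediately gives $|R|=|B|=\frac{|V(G)|}{2}$, then $|RB|=\frac{(r+1)|V(G)|}{4}=\frac{4|V(G)|}{4}=|V(G)|$, and $|RR|=|BB|=\frac{(r-1)|V(G)|}{8}=\frac{2|V(G)|}{8}=\frac{|V(G)|}{4}$. No work beyond substitution is needed here.

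Next I would establish $|V(G)|\equiv 0\pmod 4$. Since $r=3\equiv 3\pmod 4$ and in particular $r\not\equiv 1\pmod 4$, Corollary~\ref{cor:singly even} forces $|V(G)|\equiv 0\pmod 4$. (Alternatively, one can note directly that $|RR|=\frac{|V(G)|}{4}$ must be an integer, which also yields $4\mid |V(G)|$.)

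Finally, for the matching statement I would argue locally. As computed in the proof of Theorem~\ref{thm:blue equals red}, each red vertex of an $r$-regular CNBC graph has exactly $\frac{r-1}{2}$ red neighbors; with $r=3$ this is exactly one red neighbor, so every red vertex is incident to exactly one red-red edge. Symmetrically, every blue vertex is incident to exactly one blue-blue edge. Hence every vertex of $G$ lies on exactly one monochromatic edge, so the set of monochromatic edges is a perfect matching of $V(G)$; this is also consistent with the count $|RR|+|BB|=\frac{|V(G)|}{2}$ edges covering all $|V(G)|$ vertices.

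I do not anticipate a genuine obstacle: the corollary is purely a specialization of Theorems~\ref{thm:blue equals red} and Corollary~\ref{cor:singly even}, and the only step requiring a sentence of its own is the observation that "exactly one monochromatic edge per vertex" is equivalent to "the monochromatic edges form a perfect matching."
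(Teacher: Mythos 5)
Your proposal is correct and follows essentially the same route as the paper: the paper's proof also cites Theorem~\ref{thm:blue equals red} and Corollary~\ref{cor:singly even} and notes that each vertex has exactly one neighbor of its own color, which gives the matching. You have simply written out the substitutions and the matching argument in more detail.
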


\begin{proof}
    The statement follows from Theorem~\ref{thm:blue equals red} and Corollary~\ref{cor:singly even}, and the fact that each vertex is adjacent to exactly one vertex of the same color. 
\end{proof}

\subsection{Circulant graphs}\label{sub:circu} 

In this section, we provide colorings for circulant graphs, and we characterize cubic circulants. We begin with definitions and several general observations about circulants. Figures~\ref{fig:C12(1,5,6)} and \ref{fig:C14(1,6,7)} show the circulants $C_{12}(1,5,6)$ and $C_{14}(1,6,7)$.

\begin{defn} \rm Let $1\leq d_1<d_2< \cdots <d_s\leq \frac{n}{2}$. 
    The circulant graph $C_n(d_1,d_2,\dots,d_s)$ is the graph with vertex set $\{0,1,2,3, \ldots, n-1\}$ and $ij$ is an edge if $i - j \equiv \pm \; d_k \pmod n$  for some $k: 1 \le k \le s$. Given $S=\{d_1, d_2, \ldots, d_s\}$, we write $C_n(S)$ for $C_n(d_1,d_2,\dots,d_s)$, and we let $\overline{S} = \{1, 2, 3, \ldots, \frac{n}{2} \} - S$.
    \end{defn}

Circulants are regular and their degree is determined by the set $S$ of differences.

\begin{obs} \label{circ-deg} Let $S\subseteq \{1,2,3,4, \ldots, \frac{n}{2}\}$.
The circulant $C_n(S)$ is regular of degree $2|S|$ if $\frac{n}{2}\not\in S$, and regular of degree $2|S|-1$ if $\frac{n}{2}\in S$. 
\end{obs}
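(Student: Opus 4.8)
The plan is to compute the degree of an arbitrary vertex directly from the adjacency rule and observe that the count does not depend on the vertex, so regularity is automatic. Fix $i \in \{0,1,\dots,n-1\}$. By definition, $j$ is adjacent to $i$ exactly when $j \equiv i \pm d_k \pmod n$ for some $d_k \in S$, so $N(i) = \{\,(i+d_k)\bmod n,\ (i-d_k)\bmod n : d_k \in S\,\}$ and $\deg(i) = |N(i)|$. It therefore suffices to count the distinct residues in this list.

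First I would rule out degeneracies arising from a single difference. No listed residue equals $i$, since $i \pm d_k \equiv i \pmod n$ would force $n \mid d_k$, impossible as $1 \le d_k \le \frac n2 < n$. For a fixed $d_k$, the residues $i+d_k$ and $i-d_k$ agree modulo $n$ iff $n \mid 2d_k$, which for $1 \le d_k \le \frac n2$ happens precisely when $d_k = \frac n2$ (so in particular only when $n$ is even). Thus each $d_k$ with $d_k < \frac n2$ contributes two distinct neighbors of $i$, while $d_k = \frac n2$, when it occurs, contributes exactly one.

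Next I would check that residues arising from different elements of $S$ are distinct: if $i + \epsilon d_k \equiv i + \epsilon' d_\ell \pmod n$ with $\epsilon,\epsilon' \in \{+1,-1\}$ and $d_k \ne d_\ell$, then $\epsilon d_k \equiv \epsilon' d_\ell \pmod n$. The case $\epsilon = \epsilon'$ gives $d_k \equiv d_\ell \pmod n$, hence $d_k = d_\ell$ since both lie in $\{1,\dots,\lfloor n/2\rfloor\}$; the case $\epsilon = -\epsilon'$ gives $n \mid d_k + d_\ell$ with $2 \le d_k + d_\ell \le n$ and equality only when $d_k = d_\ell = \frac n2$ — both outcomes contradict $d_k \ne d_\ell$. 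Summing the per-difference counts then yields $\deg(i) = 2|S|$ when $\frac n2 \notin S$ and $\deg(i) = 2(|S|-1)+1 = 2|S|-1$ when $\frac n2 \in S$; since this value is independent of $i$, $C_n(S)$ is regular of that degree. The only point requiring care is the boundary value $d_k = \frac n2$, which is exactly what produces the two cases in the statement; everything else is routine verification that the $2|S|$ nominal neighbors are genuinely distinct.
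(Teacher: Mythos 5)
Your proof is correct and is exactly the direct verification the paper leaves implicit (the statement appears as an Observation with no written proof). Your care with the two degenerate cases --- a single difference $d_k=\frac{n}{2}$ collapsing its two neighbors into one, and distinct differences never producing coincident neighbors since $d_k+d_\ell=n$ would force $d_k=d_\ell=\frac{n}{2}$ --- is precisely what justifies the two cases in the statement.
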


The complement of a circulant is also a circulant, which we record in the next observation. 

\begin{obs} \label{obs:comple} If $S\subseteq \{1,2,3,4, \ldots, \frac{n}{2}\}$, then $\overline{C_n(S})=C_n(\overline{S})$. 
\end{obs}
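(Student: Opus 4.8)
The statement to prove is Observation~\ref{obs:comple}: if $S \subseteq \{1,2,\dots,\tfrac{n}{2}\}$, then $\overline{C_n(S)} = C_n(\overline{S})$, where $\overline{S} = \{1,2,\dots,\tfrac{n}{2}\} \setminus S$. The plan is to argue directly at the level of edge sets, since both graphs share the same vertex set $\{0,1,\dots,n-1\}$ and it suffices to show that a pair $ij$ with $i \neq j$ is an edge of $\overline{C_n(S)}$ if and only if it is an edge of $C_n(\overline{S})$.

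First I would fix the key arithmetic fact: for distinct vertices $i,j$, the residue $i - j \pmod n$ is congruent to $\pm d$ for a \emph{unique} value $d \in \{1,2,\dots,\lfloor n/2 \rfloor\}$. When $n$ is odd this is immediate; when $n$ is even one must note that the value $d = n/2$ is the only one for which $+d$ and $-d$ coincide mod $n$, and the definition of $C_n(S)$ already restricts differences to this range, so every non-loop pair is "labelled" by exactly one such $d$. (Implicitly the statement presumes $n$ even, since $\tfrac{n}{2}$ appears as an integer; I would state this.) Call this unique $d$ the \emph{type} of the pair $ij$.

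Next, by definition $ij \in E(C_n(S))$ iff the type of $ij$ lies in $S$. Hence $ij \notin E(C_n(S))$ (and $i \neq j$) iff the type of $ij$ lies in $\{1,\dots,\tfrac{n}{2}\} \setminus S = \overline{S}$, which is exactly the condition for $ij \in E(C_n(\overline{S}))$. Since $\overline{C_n(S)}$ has precisely those non-loop pairs that are non-edges of $C_n(S)$, this gives $E(\overline{C_n(S)}) = E(C_n(\overline{S}))$, and the two graphs coincide.

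There is essentially no obstacle here — the only subtle point is the uniqueness of the type of a pair, and in particular the careful treatment of $d = n/2$ and the (implicit) parity assumption on $n$; everything else is unwinding definitions. If one wanted to be slightly more careful, one could also remark that $C_n(\overline{S})$ is well-defined precisely because $\overline{S}$ is again a subset of $\{1,\dots,\tfrac{n}{2}\}$, which is immediate from its definition.
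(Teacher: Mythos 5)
Your proof is correct; the paper states this as an Observation with no written proof, treating it as immediate from the definitions, and your argument is exactly that definitional unwinding (each pair of distinct vertices has a unique difference type $d\in\{1,\dots,\frac{n}{2}\}$, so non-edges of $C_n(S)$ are precisely the edges of $C_n(\overline{S})$). Your remarks about the uniqueness of the type, the special case $d=\frac{n}{2}$, and the implicit assumption that $n$ is even are appropriate points of care but do not change the substance.
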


Note that the circulant $C_8(2)$ consists of the sum of two copies of the cycle $C_4$, and so a CNB-coloring of each copy of $C_4$ provides a CNB-coloring of $C_8(2)$.  We generalize this in the next proposition.  

\begin{prop} \label{gcd-div} 
    Let $t$ be the greatest common divisor of $d_1, d_2, \ldots, d_s, n$ for $1\leq d_1<d_2< \cdots <d_s\leq \frac{n}{2}$. Then $C_{n}(d_1, d_2, \ldots, d_s)$ is a CNBC graph (respectively, a NBC graph) if and only if  $C_{\frac{n}{t}}(\frac{d_1}{t}, \frac{d_2}{t}, \ldots, \frac{d_s}{t})$ is a CNBC graph (respectively, an NBC graph). 
\end{prop}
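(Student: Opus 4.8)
The plan is to show that $C_n(d_1,\dots,d_s)$ decomposes into $t$ vertex-disjoint copies of $C_{n/t}(d_1/t,\dots,d_s/t)$, and then argue that a CNB-coloring (resp. NB-coloring) of the whole graph restricts to one on each copy, while conversely a single coloring copied onto all $t$ pieces lifts to a coloring of the whole.

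**The key structural observation** is the following. Let $t=\gcd(d_1,\dots,d_s,n)$. Partition the vertex set $\{0,1,\dots,n-1\}$ into the $t$ residue classes modulo $t$; call them $V_0,V_1,\dots,V_{t-1}$ where $V_r=\{\,r, r+t, r+2t, \dots\,\}$, each of size $n/t$. Since every difference $d_k$ is a multiple of $t$, any edge $ij$ of $C_n(S)$ has $i\equiv j\pmod t$, so each $V_r$ induces a component-like subgraph and there are no edges between distinct classes. Moreover, the map $r+mt \mapsto m$ (indices mod $n/t$) is an isomorphism from the subgraph induced on $V_r$ to $C_{n/t}(d_1/t,\dots,d_s/t)$: a pair $r+mt, r+m't$ differs by $\pm d_k \pmod n$ iff $(m-m')t \equiv \pm d_k \pmod n$ iff $m-m' \equiv \pm (d_k/t) \pmod{n/t}$ (using that $t\mid n$). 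One should check this isomorphism also respects the wrap-around difference $n/2$ when $n/2 \in S$, but this is routine from the same divisibility. So $C_n(S)$ is the disjoint union of $t$ copies of $H:=C_{n/t}(d_1/t,\dots,d_s/t)$.

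**With the decomposition in hand, both directions are easy.** Since the relevant properties are about closed (resp. open) neighborhoods, and $N[v]$ (resp. $N(v)$) of a vertex in $V_r$ lies entirely within $V_r$, a CNB-coloring (resp. NB-coloring) of $C_n(S)$ restricts to one on each copy of $H$; taking any one copy shows $H$ is a CNBC (resp. NBC) graph. Conversely, if $H$ admits a CNB-coloring (resp. NB-coloring), color every copy of $H$ inside $C_n(S)$ using that same coloring (via the isomorphisms above); each closed (resp. open) neighborhood of $C_n(S)$ sits inside a single copy and is therefore balanced, so $C_n(S)$ is a CNBC (resp. NBC) graph. This handles the CNBC and NBC cases simultaneously, since only the locality of neighborhoods within components is used.

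**The main obstacle** is not conceptual but bookkeeping: one must be careful that the isomorphism between the induced subgraph on $V_r$ and $C_{n/t}(\cdot)$ is genuinely well-defined and degree-preserving, especially in the edge case $d_s = n/2 \in S$ (where $n/t$ is even and $d_s/t = (n/t)/2$), and that no spurious edges or collapsed differences arise because we reduced modulo $n/t$ rather than modulo $n$. Since $t \mid n$ and $t \mid d_k$ for all $k$, the congruence $(m-m')t \equiv \pm d_k \pmod n$ is equivalent to $m - m' \equiv \pm(d_k/t) \pmod{n/t}$, so no collapsing occurs and distinct differences stay distinct; once this is verified, the rest of the argument is immediate.
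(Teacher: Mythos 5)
Your proposal is correct and follows essentially the same route as the paper: decompose $C_n(S)$ into $t$ disjoint copies of $C_{n/t}(d_1/t,\dots,d_s/t)$ and use the fact that a disconnected graph is a CNBC (resp.\ NBC) graph if and only if each component is. You simply spell out the isomorphism via residue classes modulo $t$, which the paper leaves implicit.
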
 

\begin{proof}
The graph $C_{n}(d_1, d_2, \ldots, d_s)$  is isomorphic to $t$ disjoint copies of $C_{\frac{n}{t}}(\frac{d_1}{t}, \frac{d_2}{t}, \ldots, \frac{d_s}{t})$.  The result holds since a graph with more than one component is a CNBC graph (respectively, an NBC graph) if and only if each component is a CNBC graph (respectively, an NBC graph). 
\end{proof} 

\subsubsection{Circulants and complements} \label{subsub:circ-comp} 
In this section, we  construct infinite families of CNBC and NBC circulants, and construct an interesting family of quintic circulants in Example~\ref{exa:quint}.

\begin{prop} \label{circ-comp} 
The circulant $C_n(S)$ is an NBC graph if and only if $C_n(\overline{S})$ is a CNBC graph.
\end{prop}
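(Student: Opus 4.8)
The plan is to combine Observation~\ref{obs:comple}, which identifies $\overline{C_n(S)}$ with $C_n(\overline{S})$, with Theorem~\ref{thm:complement}, which relates NB-colorings of a graph to CNB-colorings of its complement whenever the two color classes have equal size. The only gap between these two facts is the hypothesis $|R|=|B|$ in Theorem~\ref{thm:complement}; I would close it by invoking the fact that every circulant is regular (Observation~\ref{circ-deg}) together with Theorems~\ref{thm:blue equals red} and~\ref{thm:blue equals red-nb}, which guarantee that \emph{every} CNB-coloring (respectively, NB-coloring) of a regular graph is automatically balanced. So there is nothing to choose: whatever coloring witnesses one side can be re-used verbatim on the other side.

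For the forward direction, suppose $C_n(S)$ is an NBC graph and fix an NB-coloring $(R,B)$. Since $C_n(S)$ is regular by Observation~\ref{circ-deg}, Theorem~\ref{thm:blue equals red-nb} gives $|R|=|B|$. Then Theorem~\ref{thm:complement} shows that $(R,B)$ is a CNB-coloring of $\overline{C_n(S)}$, which equals $C_n(\overline{S})$ by Observation~\ref{obs:comple}; hence $C_n(\overline{S})$ is a CNBC graph. For the converse, suppose $C_n(\overline{S})$ is a CNBC graph and fix a CNB-coloring $(R,B)$. Again $C_n(\overline{S})$ is regular, so Theorem~\ref{thm:blue equals red} gives $|R|=|B|$, and Theorem~\ref{thm:complement} applied to the graph $C_n(\overline{S})$ shows that $(R,B)$ is an NB-coloring of $\overline{C_n(\overline{S})}$. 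Finally $\overline{\overline{S}}=S$, so by Observation~\ref{obs:comple} this complement is $C_n(S)$, and therefore $C_n(S)$ is an NBC graph.

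I do not anticipate a genuine obstacle: the statement is essentially a dictionary translation of Theorem~\ref{thm:complement} into the language of circulants, with regularity supplying the balancedness hypothesis for free. The only point requiring a moment's care is the bookkeeping with the connection set — checking $\overline{\overline{S}}=S$ and applying Observation~\ref{obs:comple} in both directions — and observing that this argument goes through uniformly whether or not $\tfrac{n}{2}\in S$, since $C_n(S)$ is regular in either case.
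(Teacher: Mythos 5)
Your proposal is correct and follows exactly the paper's argument: regularity of circulants plus Theorems~\ref{thm:blue equals red} and~\ref{thm:blue equals red-nb} to force $|R|=|B|$, then Theorem~\ref{thm:complement} together with Observation~\ref{obs:comple}. You simply spell out the two directions and the check $\overline{\overline{S}}=S$ more explicitly than the paper does.
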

\begin{proof}
Circulants are regular graphs, and therefore by Theorem~\ref{thm:blue equals red} and Theorem~\ref{thm:blue equals red-nb},  if $C_n(S)$ is an NBC graph or a CNBC graph, then in any $(R,B)$ coloring, $|R|=|B|$. The statement follows by Theorem~\ref{thm:complement} and Observation~\ref{obs:comple}. 
\end{proof}

We prove that some families of circulants are NBC graphs and some are CNBC graphs. 

\begin{thm} \label{thm:circulant-n-2} 
Let $n$ and $s$  be  even integers and  let $S=\{d_1, d_2, \ldots, d_s\}$ where $1< d_1< d_2< \cdots <d_s < \frac{n}{2} -1$.  If exactly half of $d_1, d_2, \ldots, d_s$ are even,  then the circulant $C_n(S)$ is an NBC graph. If in addition,   $ n \equiv 2 \pmod 4$, then $C_n(S\cup \{\frac{n}{2}\})$ is a CNBC graph. 

\end{thm}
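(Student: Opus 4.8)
The plan is to exhibit an explicit coloring and verify that every neighborhood is balanced by a counting argument organized by the parity of vertex labels. First I would color vertex $i$ red if $i$ is even and blue if $i$ is odd; since $n$ is even this gives $|R|=|B|=n/2$, which is consistent with the fact that circulants are regular (Theorems~\ref{thm:blue equals red} and~\ref{thm:blue equals red-nb} force this anyway). The key structural observation is this: a difference $d_k$ is \emph{even} exactly when adding or subtracting $d_k$ preserves the parity of a label, and \emph{odd} exactly when it flips the parity. So in $C_n(S)$, a vertex $v$ is joined to $2 \cdot(\#\{k : d_k \text{ even}\})$ vertices of its own color and $2\cdot(\#\{k : d_k \text{ odd}\})$ vertices of the opposite color (the factor $2$ because each difference contributes both $+d_k$ and $-d_k$, and these land on distinct vertices since $d_k < n/2$). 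The hypothesis that exactly half of $d_1,\dots,d_s$ are even makes these two counts equal, so every open neighborhood is balanced and $C_n(S)$ is an NBC graph.

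For the second claim, I would invoke Proposition~\ref{circ-comp} together with a parity check, rather than recomputing from scratch. We have $C_n(S \cup \{\tfrac{n}{2}\})$; I want to show it is a CNBC graph. By Proposition~\ref{circ-comp}, $C_n(T)$ is a CNBC graph if and only if $C_n(\overline{T})$ is an NBC graph, where $\overline{T} = \{1,2,\dots,\tfrac n2\}\setminus T$. Here $T = S \cup \{\tfrac n2\}$, so $\overline{T} = \{1,2,\dots,\tfrac n2 - 1\}\setminus S = \{1, \tfrac n2 - 1\} \cup \big(\{2,3,\dots,\tfrac n2 -2\}\setminus S\big)$, using $1, \tfrac n2 - 1 \notin S$. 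Note $\tfrac n2 \notin \overline T$, so $C_n(\overline T)$ is regular of even degree $2|\overline T|$ (Observation~\ref{circ-deg}). It remains to check that exactly half the elements of $\overline T$ are even, so that the first part of the theorem applies to $C_n(\overline T)$ and declares it an NBC graph; this is where the extra hypothesis $n \equiv 2 \pmod 4$ enters, since then $\tfrac n2$ is odd, so $\tfrac n2 - 1$ is even and $1$ is odd — the two "boundary" elements $1$ and $\tfrac n2 - 1$ split one even, one odd.

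So the remaining bookkeeping is: among $\{1,2,\dots,\tfrac n2\}$ there are $\tfrac n4$ — wait, $n \equiv 2\pmod 4$ means $\tfrac n2$ is odd, so among $\{1,\dots,\tfrac n2\}$ the evens are $2,4,\dots,\tfrac n2 - 1$, of which there are $\tfrac{n-2}{4}$, and the odds $1,3,\dots,\tfrac n2$ number $\tfrac{n+2}{4}$. Removing from this the set $S$ (which has $\tfrac s2$ evens and $\tfrac s2$ odds, all lying strictly between $1$ and $\tfrac n2 - 1$ hence not equal to $1$ or $\tfrac n2$) and also removing $\tfrac n2$ itself (odd) leaves $\overline T$ with $\tfrac{n-2}{4} - \tfrac s2$ evens and $\tfrac{n+2}{4} - \tfrac s2 - 1 = \tfrac{n-2}{4} - \tfrac s2$ odds — equal, as needed. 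Applying the first part to $C_n(\overline T)$ (checking its parameters meet the hypotheses: $s' = |\overline T|$ is even since evens and odds are equal in count, the differences lie in $(1, \tfrac n2 - 1)$ after noting $1$ and $\tfrac n2 -1$ are allowed as the endpoints... here I should be a little careful, since the first part requires the differences to lie strictly inside $(1,\tfrac n2 -1)$, whereas $\overline T$ contains $1$ and $\tfrac n2 - 1$) — so actually the cleanest route is not to quote the first part verbatim but to re-run the parity-counting argument of the first paragraph directly on $C_n(\overline T)$, which works for any symbol set with equally many evens and odds regardless of whether $1$ or $\tfrac n2 - 1$ appears. The main obstacle, then, is precisely this boundary-value matching: tracking which of $1$ and $\tfrac n2$ are even versus odd, and confirming that the $n\equiv 2\pmod 4$ hypothesis is exactly what balances them; the core neighborhood-counting is routine once the even/odd-difference dichotomy is set up.
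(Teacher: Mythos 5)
Your proof is correct. The first half coincides with the paper's argument: color by parity of the label, note that each even difference contributes two same-colored neighbors and each odd difference two opposite-colored ones (distinct because $d_k < n/2$), and conclude from the half-even/half-odd hypothesis. For the second half you take the complementation route via Proposition~\ref{circ-comp}, which the paper mentions only as a one-sentence alternative; its primary argument is much shorter and purely direct: since $n\equiv 2\pmod 4$, the difference $\tfrac n2$ is odd, and because $+\tfrac n2$ and $-\tfrac n2$ coincide modulo $n$ it contributes exactly \emph{one} additional neighbor, of the opposite color, which together with the vertex itself (counted in its own closed neighborhood) restores the balance. Your version instead requires the even/odd bookkeeping in $\overline{S\cup\{\tfrac n2\}}$; that bookkeeping is carried out correctly ($\tfrac{n-2}{4}-\tfrac s2$ evens and the same number of odds), and you rightly caught the one real pitfall of this route — that $1$ and $\tfrac n2 -1$ lie in the complement but violate the strict inequalities in the theorem's hypotheses, so you cannot quote the first part verbatim and must instead rerun the parity count, which indeed only needs $d<\tfrac n2$. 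So both approaches work; the direct one buys brevity, while yours makes explicit why the complement construction in Proposition~\ref{circ-comp} meshes with the parity coloring, at the cost of the boundary-element care you identified.
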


\begin{proof}
Color the vertices of $C_n(S)$ alternately with red and blue, so vertex $i$ is red when $i$ is odd and blue when $i$ is even. For $1\leq i\leq s$, when $d_i$ is even, edges exist between numbered vertices of the same parity, and when $d_i$ is odd, edges exist between numbered vertices of the opposite parity. Thus, each vertex has an equal number of red and blue neighbors, and $C_n(S)$ is an NBC graph.

For the second statement, we note that $\frac{n}{2}$ is odd, and this difference causes each red vertex to have one additional blue neighbor, and each blue vertex to have one additional red neighbor. Thus, the closed neighborhood of each vertex in $C_n(S\cup\{ \frac{n}{2}\})$ is balanced.  Alternatively, $C_n(\overline{S\cup\{ \frac{n}{2}\}})$ is an NBC graph and use  Proposition~\ref{circ-comp}. \end{proof}

The graph $C_{14}(1,6,7)$ satisfies the hypothesis of Theorem~\ref{thm:circulant-n-2} where $S = \{1,6\}$, since $S$ contains one even and one odd integer and $ 14 \equiv 2 \pmod 4$.  A CNB-coloring is shown in Figure~\ref{fig:C14(1,6,7)}.

\begin{thm} \label{thm:circulant-n-0}
Let $ n \equiv 0 \pmod 4$ and $S = \{d_1, d_2, \ldots, d_s\}$ where $s$ is even and $1 \le d_1 < d_2 < \cdots < \frac{n}{2}$.  Furthermore, suppose that $d_i \in S$ if and only if $\frac{n}{2} - d_i \in S$.  Then the circulants $C_n(S)$ and $C(S\cup\{\frac{n}{4}\})$ are NBC graphs and the circulants 
  $C_n(S\cup\{\frac{n}{2}\})$ and  $C_n(S\cup\{\frac{n}{4}, \frac{n}{2}\})$ are CNBC graphs. \end{thm}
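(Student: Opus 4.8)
The plan is to prove the NBC assertions directly by a parity-based coloring argument, and then deduce the CNBC assertions either by the same coloring (tracking the effect of the extra differences $\tfrac{n}{4}$ and $\tfrac{n}{2}$) or, more cleanly, via complementation using Proposition~\ref{circ-comp} together with Observation~\ref{obs:comple}. First I would color the vertices of $C_n(S)$ by parity: vertex $i$ is red if $i$ is odd and blue if $i$ is even. As in the proof of Theorem~\ref{thm:circulant-n-2}, an even difference $d$ joins vertices of the same parity while an odd difference $d$ joins vertices of opposite parity, so the balance condition at a vertex reduces to: the number of odd differences in the generating set, counted with the correct multiplicity for the wrap-around difference $\tfrac n2$, must equal the number of even differences. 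The key observation is that the pairing condition ``$d_i \in S$ iff $\tfrac n2 - d_i \in S$'' forces $S$ to pair up differences $d$ with $\tfrac n2 - d$; since $n \equiv 0 \pmod 4$, $\tfrac n2$ is even, so $d$ and $\tfrac n2 - d$ have the \emph{same} parity. Hence $S$ splits into pairs each contributing two differences of the same parity — but that alone does not give equal numbers of odd and even differences. Here is where I would have to look harder: the condition as stated must be combined with the hypothesis that $s$ is even, and I expect the intended reading is that the pairs $\{d, \tfrac n2 - d\}$ with $d \neq \tfrac n2 - d$ contribute, and a more careful accounting (perhaps the theorem tacitly assumes, as in Theorem~\ref{thm:circulant-n-2}, that exactly half the $d_i$ are even, or this follows from the self-complementary-type symmetry) yields equality.

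The main obstacle, then, is verifying that the parity coloring actually balances every open neighborhood of $C_n(S)$ — i.e., confirming that the structural hypothesis on $S$ genuinely implies equal numbers of odd and even elements among $d_1,\dots,d_s$, correctly handling the boundary case $\tfrac n2 \in S$ (where the difference $\tfrac n2$ contributes only one neighbor rather than two). Once the NBC claim for $C_n(S)$ is established, I would handle $C_n(S \cup \{\tfrac n4\})$ by noting that $\tfrac n4$ has a definite parity determined by $n \pmod 8$; since adding a single difference $d$ changes each vertex's red-count and blue-count among its neighbors by the same increment precisely when... — actually the cleaner route is to observe that $\tfrac n4 = \tfrac n2 - \tfrac n4$, so $\tfrac n4$ is self-paired under the involution $d \mapsto \tfrac n2 - d$, and adding it preserves the pairing structure, so the same parity coloring still works (each vertex gains exactly one neighbor of each parity via the new difference when $\tfrac n4$ is odd, or two of the same parity when $\tfrac n4$ is even, and in the latter case the global count of odd-vs-even differences is unchanged). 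I would spell out the small arithmetic: if $\tfrac n4$ is odd, each vertex gains one red and one blue neighbor; if $\tfrac n4$ is even, the even/odd split among differences is unaffected.

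For the CNBC assertions, the slick approach is complementation. By Observation~\ref{obs:comple}, $\overline{C_n(S')} = C_n(\overline{S'})$ for any $S' \subseteq \{1,\dots,\tfrac n2\}$, and by Proposition~\ref{circ-comp}, $C_n(T)$ is a CNBC graph iff $C_n(\overline T)$ is an NBC graph. So it suffices to check that $\overline{S \cup \{\tfrac n2\}} = \overline S \setminus \{\tfrac n2\}$-type complements again satisfy the hypotheses of the NBC part of the theorem. The involution $d \mapsto \tfrac n2 - d$ preserves the full set $\{1,\dots,\tfrac n2 - 1\}$ and fixes $\tfrac n2$ and $\tfrac n4$; since $S$ is closed under this involution, so is its complement within $\{1,\dots,\tfrac n2\}$ after adjusting for the fixed points. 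I would therefore verify: removing $\tfrac n2$ from the ground set when passing to $C_n(S \cup \{\tfrac n2\})$ corresponds to the NBC graph $C_n(\overline{S \cup \{\tfrac n2\}}) = C_n(\overline S \cap \{1,\dots,\tfrac n2 - 1\})$, which still has an even-sized, involution-closed generating set with the right parity balance, hence is NBC by the first part; thus $C_n(S \cup \{\tfrac n2\})$ is CNBC. The case $C_n(S \cup \{\tfrac n4, \tfrac n2\})$ is handled identically, with $\tfrac n4$ also moved across the complement. Alternatively, and perhaps more transparently for the reader, I would give the direct coloring argument for the CNBC cases mirroring the second paragraph of the proof of Theorem~\ref{thm:circulant-n-2}: the parity coloring on $C_n(S)$ already balances open neighborhoods, and since $n \equiv 0 \pmod 4$ means $\tfrac n2$ is even — wait, that makes $\tfrac n2$ join same-parity vertices, so adding $\tfrac n2$ does \emph{not} by itself flip a red vertex's deficit the way it did when $n \equiv 2 \pmod 4$; instead one checks that adding $v$ itself (the closed-neighborhood term) contributes one vertex of $v$'s own color, which must be offset. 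This is exactly why the construction needs $\tfrac n2$ (and possibly $\tfrac n4$) with $n \equiv 0 \pmod 4$: the self-loop-like term from $v \in N[v]$ needs a compensating odd difference, and when $\tfrac n2$ is even one uses $\tfrac n4$ (odd precisely when $n \equiv 4 \pmod 8$) or a recoloring. I anticipate the cleanest writeup ultimately routes everything through Proposition~\ref{circ-comp} to avoid re-deriving the closed-neighborhood bookkeeping, so that is the approach I would commit to, with the parity-count verification of the complement being the one step that requires genuine care.
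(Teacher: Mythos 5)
There is a genuine gap, and you essentially flag it yourself but never close it. Your core plan is to use the alternating parity coloring (vertex $i$ red iff $i$ is odd), and you correctly observe that since $n\equiv 0\pmod 4$, the quantity $\frac{n}{2}$ is even, so $d$ and $\frac{n}{2}-d$ have the \emph{same} parity; hence the pairing hypothesis on $S$ gives no control over how many $d_i$ are odd versus even. Your hoped-for rescue --- that the theorem ``tacitly assumes exactly half the $d_i$ are even'' --- is false: take $n=12$ and $S=\{2,4\}$, which satisfies every hypothesis with all elements even. Under the parity coloring every neighbor of a vertex in $C_{12}(2,4)$ has the vertex's own color, so that coloring fails outright, and no amount of bookkeeping repairs it. Since your complementation route for the CNBC cases reduces back to the (unproved) NBC claim, the entire argument is left hanging on a coloring that does not work.

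The missing idea is a different coloring. The paper colors the even vertices of $\{0,1,\ldots,\frac{n}{2}-1\}$ red and the odd ones blue, and reverses this on $\{\frac{n}{2},\ldots,n-1\}$; because $\frac{n}{2}$ is even, antipodal vertices $k$ and $k+\frac{n}{2}\pmod n$ always receive opposite colors. Then for each pair $\{d_i,\frac{n}{2}-d_i\}$ the four neighbors of $j$ are $j\pm d_i$ and $j\pm d_i+\frac{n}{2}$ (all mod $n$), i.e.\ two antipodal pairs, so they contribute exactly two red and two blue neighbors regardless of the parities of the $d_i$ --- this is where the hypothesis ``$d_i\in S$ iff $\frac{n}{2}-d_i\in S$'' is actually used. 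The same antipodal property instantly handles the extra differences: $j-\frac{n}{4}$ and $j+\frac{n}{4}$ are antipodal (so adding $\frac{n}{4}$ preserves balance of open neighborhoods), and the single neighbor $j+\frac{n}{2}$ has color opposite to $j$, which is precisely what converts the balanced open neighborhood into a balanced closed neighborhood. Your instinct to route the CNBC cases through Proposition~\ref{circ-comp} is a legitimate alternative once the NBC part is in hand, but the direct check via antipodality is shorter and is what the paper does.
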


\begin{proof}
Color the even vertices in $\{0,1,2,,\ldots, \frac{n}{2}-1\} $ of $C_n(S)$ red and the odd ones blue. Color the even vertices in $\{ \frac{n}{2}, \frac{n}{2}+1, \ldots, n-1\} $  blue and the odd ones red. Since $\frac{n}{2}$ is even,  vertex $k$ and vertex $k + \frac{n}{2}  \pmod n$ 
have opposite colors for each $k: 0 \le k \le n-1$.  Consider any vertex $j \in C_n(S)$ and  its neighbors.  There are four neighbors  corresponding to $d_i$ and  $\frac{n}{2} - d_i$, namely,  $j+ d_i \pmod n$, $j- d_i \pmod n$, $j -d_i + \frac{n}{2} \pmod n$, and $j +d_i + 
\frac{n}{2} \pmod n$.  As noted above, $j+ d_i \pmod n$ and  $j +d_i + \frac{n}{2} \pmod n$ have opposite colors as do $j- d_i \pmod n$ and  $j -d_i + \frac{n}{2} \pmod n$.  Thus vertex $j$ has two red neighbors and two blue neighbors corresponding to $d_i$ and $\frac{n}{2} - d_i$ for each $i$ and therefore our coloring is an NBC coloring. Now $(j - \frac{n}{4} ) + \frac{n}{2} = j + \frac{n}{4} $ hence $j-\frac{n}{4}$ and $j+\frac{n}{4}$ are opposite colors. Thus, $C(S\cup\{\frac{n}{4}\})$ is an NBC graph.

For the graph $C_n(S\cup\{\frac{n}{2}\})$ and  $C_n(S\cup\{\frac{n}{4}, \frac{n}{2}\})$, we use the same coloring.  Since vertex $k$ and vertex $k + \frac{n}{2} \pmod n$ have opposite colors for each $k$, our coloring is a CNBC coloring. Alternatively, $C_n(\overline{S\cup\{ \frac{n}{2}\}})$ or $C_n(\overline{S\cup\{\frac{n}{4}, \frac{n}{2}\}})$ is an NBC graph and use  Proposition~\ref{circ-comp}.
\end{proof}

\begin{example} \label{exa:quint} \rm In this example we consider  $5$-regular circulant graphs $C_n(1,\frac{n}{2}-1,\frac{n}{2})$, and observe that they are CNBC graphs whenever $n$ is even. This follows from Theorem~\ref{thm:circulant-n-2} when $n \equiv 2 \pmod 4$,  because 1 is odd and $\frac{n}{2}-1$ is even, and from Theorem~\ref{thm:circulant-n-0} when $n \equiv 0 \pmod 4$, because $1+\frac{n}{2}-1=\frac{n}{2}$. Figure~\ref{fig:C12(1,5,6)} demonstrates the coloring for $C_{12}(1,5,6)$ and Figure~\ref{fig:C14(1,6,7)} demonstrates the coloring for $C_{14}(1,6,7)$. By the same theorems, or by Proposition~\ref{thm:complement}, $C_{12}(2,3,4)$ and $C_{14}(2,3,4,5)$ are NBC graphs.
\end{example}

\begin{thm} \label{thm:circulant-n-0-8}
Let $1 \le d_1 < d_2 < \cdots <  d_s< \frac{n}{2}$ and $S = \{d_1, d_2, \ldots, d_s\}$.  Further, let $s_1$ be the number of $d_i$ in $S$ that are congruent to $0 \pmod 4$ and $s_2$ be the number  that are congruent to $2 \pmod 4$.  Then the circulant   $C_n(S\cup\{\frac{n}{2}\})$ is a CNBC graph if either (i)  $ n \equiv 0 \pmod 8$ and $s_2 = s_1 + 1$ or (ii) $ n \equiv 4 \pmod 8$ and and $s_2 = s_1$.
 \end{thm}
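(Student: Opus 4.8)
The plan is to mimic the coloring strategy of Theorem~\ref{thm:circulant-n-2} but work modulo $4$ on the vertex labels instead of modulo $2$, so that the difference $\frac{n}{2}$ again behaves correctly in the closed neighborhood. Write $n = 4m$ so that $\frac{n}{2} = 2m$, and note that in case (i), $m$ is even (since $n \equiv 0 \pmod 8$), while in case (ii), $m$ is odd (since $n \equiv 4 \pmod 8$). The key point is the parity of $\frac{n}{2} = 2m$ modulo $4$: in case (i) we have $\frac{n}{2} \equiv 0 \pmod 4$, and in case (ii) we have $\frac{n}{2} \equiv 2 \pmod 4$.

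First I would color vertex $i$ red if $i \equiv 0$ or $1 \pmod 4$ and blue if $i \equiv 2$ or $3 \pmod 4$ (some such ``half-and-half mod $4$'' coloring; the exact choice will need checking). For a difference $d$, adding $d$ to a label shifts its residue mod $4$, and whether an edge of difference $d$ connects two same-colored or two differently-colored vertices depends only on $d \bmod 4$: differences congruent to $0 \pmod 4$ are always monochromatic, differences congruent to $2 \pmod 4$ are always bichromatic, and differences that are odd contribute one red and one blue among the two neighbors $i+d$ and $i-d$ (here I would check that $i+d$ and $i-d$ land in complementary classes under this coloring — this is the analogue of the ``one even and one odd neighbor'' phenomenon in the proof of Theorem~\ref{thm:circulant-n-2}). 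So each odd difference in $S$ contributes a balanced pair to every open neighborhood automatically; each difference congruent to $0 \pmod 4$ adds two neighbors of the vertex's own color; each difference congruent to $2 \pmod 4$ adds one red and one blue. Then I would count: a vertex $i$'s open neighborhood from $S$ has an excess of (own color) $- $ (other color) equal to $2s_1 - 2s_2$ if the contributions of the $\frac{n}{2}$-edge are set aside; after adding the single neighbor $i + \frac{n}{2}$, whose color is determined by $\frac{n}{2} \bmod 4$, we want the closed neighborhood (which also includes $i$ itself, of $i$'s own color) to be balanced, i.e. the net own-color excess over the closed neighborhood to be zero.

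Assembling this: the closed-neighborhood own-color excess at $i$ is $1$ (from $i$) $+ \, 2s_1 - 2s_2$ (from the $0$ and $2 \bmod 4$ differences, odd differences contributing $0$) $+ \, \varepsilon$, where $\varepsilon = +1$ if $\frac{n}{2} \equiv 0 \pmod 4$ (the $\frac{n}{2}$-neighbor matches $i$'s color) and $\varepsilon = -1$ if $\frac{n}{2} \equiv 2 \pmod 4$. In case (i), $\varepsilon = +1$ and we need $1 + 2s_1 - 2s_2 + 1 = 0$, i.e. $s_2 = s_1 + 1$, exactly the hypothesis. In case (ii), $\varepsilon = -1$ and we need $1 + 2s_1 - 2s_2 - 1 = 0$, i.e. $s_2 = s_1$, again exactly the hypothesis. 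Since this excess is $0$ at every vertex (the count does not depend on $i$), the coloring is a CNB-coloring and $C_n(S \cup \{\frac{n}{2}\})$ is a CNBC graph.

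The step I expect to be the main obstacle is pinning down the right ``half mod $4$'' coloring and verifying carefully that, for every odd difference $d$, the two neighbors $i \pm d$ really do receive opposite colors, and that for $d \equiv 2 \pmod 4$ they receive opposite colors while for $d \equiv 0 \pmod 4$ they receive $i$'s color — i.e. that the coloring is genuinely a ``balanced mod $2m$ shift'' coloring in the sense needed. One has to be careful that wraparound modulo $n = 4m$ is compatible with the residue-mod-$4$ bookkeeping; this works precisely because $4 \mid n$. Once that compatibility is established, the rest is the elementary excess count above, and the two cases fall out by matching $\varepsilon$ to the hypothesis on $s_1, s_2$. Alternatively, one could phrase everything via Proposition~\ref{circ-comp} by checking $C_n(\overline{S \cup \{\frac{n}{2}\}})$ is an NBC graph, but the direct coloring argument seems cleaner here.
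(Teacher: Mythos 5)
Your proposal is correct and follows essentially the same route as the paper: the same coloring (vertex $i$ red when $i\equiv 0,1\pmod 4$, blue when $i\equiv 2,3\pmod 4$), the same observation that odd differences give one neighbor of each color since $i+d$ and $i-d$ differ by $2d\equiv 2\pmod 4$, and the same count over the residues of the $d_i$ and of $\frac{n}{2}$ modulo $4$. The ``main obstacle'' you flag is exactly the check the paper performs, and it goes through as you anticipated.
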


\begin{proof}  
Color vertex $i$ red if  $i\equiv 0,1\pmod 4$ and blue if  $i\equiv 2,3 \pmod 4$ blue. First consider $d_j \equiv 1,3 \pmod 4$ and let $s_3$ be the number of  such $d_j$ in $S$.  For each  such $d_j$, we have $2d_j \equiv2 \pmod 4$, so 
vertices that differ by $2d_j$ will have opposite colors.  In particular, for any vertex $i$, the vertices $i+ d_j \pmod n$ and $i - d_j \pmod n$ have opposite colors.  So in $C_n(S\cup\{\frac{n}{2}\})$, each vertex $i$ has $s_3$ red neighbors and $s_3$ blue neighbors of the form $i \pm d_j \pmod n$ where $d_j \equiv 1,3 \pmod 4$.

It remains to consider neighbors of $i$ of the form $i \pm d_j \pmod n$ where $d_j = \frac{n}{2}$ or $d_j \equiv 0,2 \pmod 4$.   When $d_j \equiv 0 \pmod 4$, vertices $i \pm d_j \pmod n$    are the same color as $i$, and when $d_j \equiv 2 \pmod 4$, they   are the opposite color from $i$.  Thus in $C_n(S\cup\{\frac{n}{2}\})$, vertex $i$ has $2s_1$ neighbors of its color and $2s_2$ neighbors of the opposite color that have the form $i \pm d_j \pmod n$ where $d_j \equiv 0,2 \pmod 4$.  If $n \equiv 0 \pmod 8$ then vertex $i + \frac{n}{2}$ has the same color as $i$, so in $C_n(S\cup\{\frac{n}{2}\})$, the closed neighborhood $N[i]$ contains $s_3 + 2 + 2s_1$ vertices that are the same color as $i$ and $s_3 + 2s_2$ of the opposite color.  These quantities are equal when $s_2 = s_1 + 1$.
If $n \equiv 4 \pmod 8$ then vertex $i + \frac{n}{2}$ has the opposite color from $i$, so in $C_n(S\cup\{\frac{n}{2}\})$, the closed neighborhood $N[i]$ contains $s_3 + 2s_1 + 1$ vertices that are the same color as $i$ and $s_3 + 2s_2 + 1$ of the opposite color.  These quantities are equal when $s_2 = s_1$.
  \end{proof}

\begin{figure}[ht] 
\centering
\mbox{
    \begin{minipage}[b]{0.45\textwidth}
        \centering
        \begin{tikzpicture}[scale=.6]
            \def\n{12} 
            \def\R{3} 

            \foreach \i in {1,3,4,6,8,11} {
                \node[circle, draw, fill=red!30, minimum size=.4cm, inner sep=0pt] (O\i) at ({360/\n * \i}:\R) {\small R};
            }
            \foreach \i in {0,2,5,7,9,10} {
                \node[circle, draw, fill=blue!30, minimum size=.4cm, inner sep=0pt] (O\i) at ({360/\n * \i}:\R) {\small B};    
            }

            \foreach \i in {0, 1, ..., 11} {
                \pgfmathtruncatemacro{\nexti}{mod(\i+1,\n)}
                \draw (O\i) -- (O\nexti);
            }

            \foreach \i in {0, 1, ..., 11} {
                \pgfmathtruncatemacro{\nexti}{mod(\i+5,\n)}
                \draw (O\i) -- (O\nexti);
            }
    
            \foreach \i in {0, 1, ..., 11} {
                \pgfmathtruncatemacro{\nexti}{mod(\i+6,\n)}
                \draw (O\i) -- (O\nexti);
            }
        \end{tikzpicture}
        \caption{$C_{12}(1,5,6)$ with a CNB-coloring.}
        \label{fig:C12(1,5,6)}
    \end{minipage}
    \hspace{0.05\textwidth} 
    \centering
    \begin{minipage}[b]{0.45\textwidth}
        \centering
        \begin{tikzpicture}[scale=.6]
            \def\n{14} 
            \def\R{3} 

            \foreach \i in {0, 2, 4, 6, 8, 10, 12} {
                \node[circle, draw, fill=blue!30, minimum size=.4cm, inner sep=0pt] (O\i) at ({360/\n * \i}:\R) {\small B};
            }
            \foreach \i in {1, 3, 5, 7, 9, 11, 13} {
                \node[circle, draw, fill=red!30, minimum size=.4cm, inner sep=0pt] (O\i) at ({360/\n * \i}:\R) {\small R};    
            }

            \foreach \i in {0, 1, ..., 13} {
                \pgfmathtruncatemacro{\nexti}{mod(\i+1,\n)}
               \draw (O\i) -- (O\nexti);
            }

            \foreach \i in {0, 1, ..., 13} {
                \pgfmathtruncatemacro{\nexti}{mod(\i+6,\n)}
                \draw (O\i) -- (O\nexti);
            }
    
            \foreach \i in {0, 1, ..., 13} {
                \pgfmathtruncatemacro{\nexti}{mod(\i+7,\n)}
                \draw (O\i) -- (O\nexti);
            }
        \end{tikzpicture}
        \caption{$C_{14}(1,6,7)$ with a CNB-coloring.}
        \label{fig:C14(1,6,7)}
    \end{minipage}
}
\end{figure}

\subsubsection{Cubic and quintic circulants} 

In this section, we characterize cubic circulants and give a partial characterization of quintic circulants.  Note that all cubic circulants have the form $C_n(S)$ where $S=\{d, \frac{n}{2}\}$ and $1 \le d < \frac{n}{2}$ and all quintic circulants have the form $C_n(S)$ where $S=\{d_1, d_2 ,\frac{n}{2}\}$ and $1 \le d_1 < d_2 < \frac{n}{2}$.
Figures~\ref{fig:C12(1,6)} and \ref{fig:C12(5,6)} show examples of cubic circulants and Figures~\ref{fig:C12(1,5,6)} and \ref{fig:C14(1,6,7)} show examples of quintic circulants.

\begin{thm}\label{thm:cubic circulants} 
If $1\leq d\leq \frac{n}{2}-1$ and $d$ is relatively prime to $\frac{n}{2}$, then the cubic circulant $C_n(d,\frac{n}{2})$ is a CNBC graph if and only if $n \equiv 0\pmod 4$. 

\end{thm}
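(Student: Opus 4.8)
The plan is to prove the two implications separately; both are short, and the entire content of the converse is a single explicit coloring.

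For the forward direction I would first note that $C_n(d,\tfrac n2)$ is cubic: since $S=\{d,\tfrac n2\}$ has $|S|=2$ and $\tfrac n2\in S$, Observation~\ref{circ-deg} gives that the graph is regular of degree $2|S|-1=3$. Hence, if $C_n(d,\tfrac n2)$ is a CNBC graph, then Corollary~\ref{3-reg-cor} (equivalently, Corollary~\ref{cor:singly even} with $r=3$) applies and forces $n=|V(C_n(d,\tfrac n2))|\equiv 0\pmod 4$.

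For the converse, assume $n\equiv 0\pmod 4$, so $\tfrac n2$ is even; combined with $\gcd(d,\tfrac n2)=1$ this forces $d$ to be odd. I would then check that the proper $2$-coloring of the underlying $n$-cycle is a CNB-coloring: color vertex $i$ red if $i$ is even and blue if $i$ is odd, which is well defined on $\mathbb{Z}_n$ since $n$ is even. The closed neighborhood of $i$ is $\{\,i-d,\ i,\ i+d,\ i+\tfrac n2\,\}$ with indices taken mod $n$. Since $d$ is odd, $i-d$ and $i+d$ have parity opposite to $i$; since $\tfrac n2$ is even, $i+\tfrac n2$ has the same parity as $i$. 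So $N[i]$ contains exactly two even-indexed and two odd-indexed vertices, that is, two red and two blue, and the coloring is a CNB-coloring.

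I do not expect a serious obstacle. The one point worth care is the parity bookkeeping in the converse, together with the observation that the hypotheses $n\equiv 0\pmod 4$ and $\gcd(d,\tfrac n2)=1$ jointly force $d$ odd, which is exactly the condition that makes the alternating coloring balanced. (If one prefers, since $d$ is then odd and $\gcd(d,\tfrac n2)=1$ we get $\gcd(d,n)=1$, so $x\mapsto d^{-1}x$ is a graph isomorphism $C_n(d,\tfrac n2)\cong C_n(1,\tfrac n2)$, using that $d^{-1}$ is odd so $d^{-1}\cdot\tfrac n2\equiv\tfrac n2\pmod n$, which reduces everything to the single graph $C_n(1,\tfrac n2)$; but the direct coloring makes this detour unnecessary.) Finally, the coprimality hypothesis genuinely cannot be dropped: for instance $C_{12}(2,6)\cong 2\,C_6(1,3)$ by Proposition~\ref{gcd-div}, and $C_6(1,3)$ is cubic on $6\not\equiv 0\pmod 4$ vertices, hence not a CNBC graph, even though $12\equiv 0\pmod 4$.
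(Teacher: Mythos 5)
Your proposal is correct and matches the paper's proof essentially step for step: the forward direction is the same appeal to Corollary~\ref{3-reg-cor} for cubic CNBC graphs, and the converse uses the same observation that $n\equiv 0\pmod 4$ together with $\gcd(d,\tfrac n2)=1$ forces $d$ odd, followed by the same alternating parity coloring (the paper colors odd vertices red and even vertices blue, which is just your coloring with the colors swapped). The extra remarks about the isomorphism to $C_n(1,\tfrac n2)$ and the necessity of the coprimality hypothesis are fine but not needed; the paper makes the latter point with the analogous example $C_{12}(4,6)$.
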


\begin{proof} If $C_n(d,\frac{n}{2})$ is a CNBC graph, then $n \equiv \ 0 \pmod  4$ by Corollary~\ref{3-reg-cor}.

Conversely, suppose $n \equiv \ 0 \pmod  4$. Since $d$ and $\frac{n}{2}$ are relatively prime, and $\frac{n}{2}$ is even, then $d$ must be odd. Color vertex $i$ red  if $i$ is odd and  blue if $i$ is even. Since $d$ is odd, each vertex $i$ has two neighbors of opposite parity ($i\pm d$) and one neighbor of the same parity $(i+\frac{n}{2})$.  Thus, the closed neighborhood of each vertex is balanced, and $C_n(d,\frac{n}{2})$ is a CNBC graph.
\end{proof}

Theorem~\ref{thm:cubic circulants} is still helpful for cubic circulants $C_n(d,\frac{n}{2})$ where $d$ and $\frac{n}{2}$ are not relatively prime, but we must first apply Proposition~\ref{gcd-div}. For example, the graph $C_{12}(4,6)$ is isomorphic to two copies of $C_6(2,3)$, and these are not CNBC graphs by Theorem~\ref{thm:cubic circulants}, so neither is $C_{12}(4,6)$.

\begin{figure}[ht]
\centering
\mbox{
    \begin{minipage}[b]{0.45\textwidth}
        \centering
        \begin{tikzpicture}[scale=.6]
            \def\n{12} 
            \def\R{3} 

            \foreach \i in {0, 2, 4, 6, 8, 10} {
                \node[circle, draw, fill=blue!30, minimum size=.4cm, inner sep=0pt] (O\i) at ({360/\n * \i}:\R) {\small B};
            }
            \foreach \i in {1, 3, 5, 7, 9, 11} {
                \node[circle, draw, fill=red!30, minimum size=.4cm, inner sep=0pt] (O\i) at ({360/\n * \i}:\R) {\small R};    
            }

            \foreach \i in {0, 1, ..., 11} {
                \pgfmathtruncatemacro{\nexti}{mod(\i+1,\n)}
                \draw (O\i) -- (O\nexti);
            }

            \foreach \i in {0, 1, ..., 11} {
                \pgfmathtruncatemacro{\nexti}{mod(\i+6,\n)}
                \draw (O\i) -- (O\nexti);
            }
        \end{tikzpicture}
        \caption{$C_{12}(1,6)$ with a CNB-coloring.}
        \label{fig:C12(1,6)}
    \end{minipage}
    \hspace{0.05\textwidth} 
    \centering
    \begin{minipage}[b]{0.45\textwidth}
        \centering
        \begin{tikzpicture}[scale=.6]
            \def\n{12} 
            \def\R{3} 

            \foreach \i in {0, 2, 4, 6, 8, 10} {
                \node[circle, draw, fill=blue!30, minimum size=.4cm, inner sep=0pt] (O\i) at ({360/\n * \i}:\R) {\small B};
            }
            \foreach \i in {1, 3, 5, 7, 9, 11} {
                \node[circle, draw, fill=red!30, minimum size=.4cm, inner sep=0pt] (O\i) at ({360/\n * \i}:\R) {\small R};    
            }

            \foreach \i in {0, 1, ..., 11} {
                \pgfmathtruncatemacro{\nexti}{mod(\i+5,\n)}
                \draw (O\i) -- (O\nexti);
            }

            \foreach \i in {0, 1, ..., 11} {
                \pgfmathtruncatemacro{\nexti}{mod(\i+6,\n)}
                \draw (O\i) -- (O\nexti);
            }
        \end{tikzpicture}
        \caption{$C_{12}(5,6)$ with a CNB-coloring.}
        \label{fig:C12(5,6)}
    \end{minipage}
}
\end{figure}

In the next theorem, we characterize quintic circulants with $n \equiv 2 \pmod 4$. We address the remaining cases of quintic circulants in Question~\ref{ques:circ} of Section~\ref{openq}. 

\begin{thm}\label{thm:circulant-n-2-quint} 
If $n \equiv 2 \pmod 4$ and $1\leq d_1<d_2<\frac{n}{2}$, then
the quintic circulant $C_n(d_1, d_2, \frac{n}{2})$ is a CNBC graph if and only if  $d_1\not \equiv d_2 \pmod 2$. 
\end{thm}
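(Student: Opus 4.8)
I would prove the two directions separately, and split the ``only if'' direction according to the common parity of $d_1$ and $d_2$.

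For the ``if'' direction, assume $d_1$ and $d_2$ have opposite parity, say $d_1$ odd and $d_2$ even, and color vertex $i$ red when $i$ is odd and blue when $i$ is even. Since $n\equiv 2\pmod 4$, the difference $\tfrac{n}{2}$ is odd, so in the closed neighborhood $N[i]=\{i,\,i\pm d_1,\,i\pm d_2,\,i+\tfrac{n}{2}\}$ — which has six distinct vertices, as $C_n(d_1,d_2,\tfrac{n}{2})$ is $5$-regular by Observation~\ref{circ-deg} — the two vertices $i\pm d_2$ have the same parity as $i$ while the three vertices $i\pm d_1$ and $i+\tfrac{n}{2}$ have the opposite parity. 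Hence every closed neighborhood contains three red and three blue vertices, so $C_n(d_1,d_2,\tfrac{n}{2})$ is a CNBC graph; this is essentially the coloring used in the proof of Theorem~\ref{thm:circulant-n-2}.

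For the ``only if'' direction I argue the contrapositive: assume $d_1\equiv d_2\pmod 2$ and suppose, for contradiction, that $G=C_n(d_1,d_2,\tfrac{n}{2})$ admits a CNB-coloring $(R,B)$; put $m=\tfrac{n}{2}$, which is odd, and note that since $G$ is $5$-regular each closed neighborhood has six vertices, exactly three of each color. \emph{Case 1: $d_1,d_2$ both odd.} Then $d_1,d_2,\tfrac{n}{2}$ are all odd, so every edge of $G$ joins an even-labeled vertex to an odd-labeled one; thus $G$ is bipartite with parts $V_0$ (even labels) and $V_1$ (odd labels), each of size $m$. Let $r_j=|R\cap V_j|$. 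Summing $|N[x]\cap R|$ over $x\in V_0$ gives $3m$ on one hand, and on the other hand $r_0+\sum_{x\in V_0}|N(x)\cap R| = r_0+5r_1$, since $N(x)\subseteq V_1$ for $x\in V_0$ and each red vertex of $V_1$ lies in exactly $5$ of these closed neighborhoods; hence $r_0+5r_1=3m$, and symmetrically $5r_0+r_1=3m$. Subtracting gives $r_0=r_1$, so $6r_0=3m$, i.e.\ $r_0=m/2$, which is impossible since $m$ is odd.

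\emph{Case 2: $d_1,d_2$ both even.} Since $d_1,d_2$ are even and $\tfrac{n}{2}$ is odd, the map $i\mapsto(i\bmod \tfrac{n}{2},\,i\bmod 2)$ is a group isomorphism $\mathbb{Z}_n\to\mathbb{Z}_{n/2}\times\mathbb{Z}_2$ carrying the connection set $\{\pm d_1,\pm d_2,\tfrac{n}{2}\}$ to $\{(\pm d_1,0),(\pm d_2,0),(0,1)\}$, so $G\cong F\,\square\,K_2$, where $F=C_{n/2}(d_1,d_2)$ is $4$-regular on $m$ vertices; write $F_0,F_1$ for the two $K_2$-fibers, and recall $|R|=|V(G)|/2=m$ by Theorem~\ref{thm:blue equals red}. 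For each $x\in V(F)$, the closed neighborhoods of $(x,0)$ and of $(x,1)$ both contain $\{(x,0),(x,1)\}$ and otherwise contain the $F_0$- resp.\ $F_1$-copies of $N_F(x)$; since both are balanced, $x$ has the same number of red neighbors in $F_0$ as in $F_1$. Summing this equality over all $x$ and using that $F$ is $4$-regular yields $|R\cap F_0|=|R\cap F_1|$, so $|R|=2\,|R\cap F_0|$ is even, contradicting that $|R|=m$ is odd. This completes both cases, and hence the proof.

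The routine parts are the distinctness check in the ``if'' direction and the two double-counts; the real content, and the step I expect to be the main obstacle, is spotting the two structural reductions — that three odd differences make the quintic circulant bipartite, and that two even differences together with $\tfrac{n}{2}$ realize it as $C_{n/2}(d_1,d_2)\,\square\,K_2$ — after which the oddness of $m=\tfrac{n}{2}$ forces a color class of size $m/2$ in each case.
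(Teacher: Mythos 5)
Your proof is correct, and its overall skeleton matches the paper's: the ``if'' direction reuses the alternating coloring of Theorem~\ref{thm:circulant-n-2}, and the ``only if'' direction splits into the same two cases, each ending in a parity contradiction with $|R|=\tfrac{n}{2}$ odd (via Theorem~\ref{thm:blue equals red}). Your Case~1 is the paper's argument in different bookkeeping: the paper double-counts $RR$ edges across the bipartition to get $x_r=y_r$, you double-count red vertices in closed neighborhoods to get $r_0=r_1$; both then contradict the oddness of $m$. Where you genuinely diverge is Case~2. The paper also observes that the even differences split the graph into two copies of a circulant joined by the perfect matching given by $\tfrac{n}{2}$, but it then fixes one part $X$, counts the red endpoints of edges of $G[X]$ in two ways with auxiliary parameters ($i$ blue vertices of $X$ with blue matching-neighbor, $j$ red vertices with blue matching-neighbor), and derives $x_r=x_b$, contradicting $|X|$ odd. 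You instead make the product structure $G\cong F\,\square\,K_2$ explicit and compare the closed neighborhoods of the two matched vertices $(x,0)$ and $(x,1)$ directly: since they share $\{(x,0),(x,1)\}$ and are both balanced, each $x$ sees equally many red $F$-neighbors in the two fibers, and summing over $x$ (each red vertex counted $\deg_F=4$ times) gives $|R\cap F_0|=|R\cap F_1|$, so $|R|$ is even. This is cleaner and avoids the case analysis on matching-neighbor colors; it is essentially the same symmetry the paper exploits, but localized to matched pairs rather than to an edge count inside one part. Both arguments are valid; yours generalizes a bit more transparently to any situation where a CNBC graph decomposes as $F\,\square\,K_2$ with $|V(F)|$ odd.
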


\begin{proof} 
Let $G = C_n(d_1, d_2, \frac{n}{2})$ and partition $V(G)$ as $X \cup Y$ where $X = \{0,2,4,6, \ldots, n\}$ and $Y = \{1,3,5, \ldots, n-1\}.$
Suppose that $n \equiv 2 \pmod 4$ and $1\leq d_1<d_2<\frac{n}{2}$.  
By Theorem~\ref{thm:circulant-n-2}, when $d_1$ and $d_2$ have opposite parity, the quintic circulant $C_n(d_1, d_2, \frac{n}{2})$ is a CNBC graph.  Conversely, suppose $d_1$ and $d_2$ have the same parity and for a contradiction, assume that $(R,B)$ is a CNB-coloring of $C_n(d_1, d_2, \frac{n}{2})$.
In any CNB-coloring of a quintic graph, every red vertex has two red neighbors and three blue neighbors. 

First suppose  that $d_1$ and $d_2$ are both odd.
Since $n \equiv 2 \pmod 4$, $\frac{n}{2}$ is also odd, and hence $C_n(d_1, d_2, \frac{n}{2})$ is bipartite with bipartition $X \cup Y$. Let $x_r$ be the number of red vertices in $X$ and $y_r$ be the number of red vertices in $Y$.  We can calculate  the number of edges in $RR$   by considering  their endpoint in $X$ to get $|RR| = 2x_r$ or  by considering  their endpoint in $Y$ to get $|RR| = 2y_r$.  Thus $x_r = y_r$ and $|R| = x_r + y_r = 2x_r$, which is even.  By Theorem~\ref{thm:blue equals red}, we have $|R| = \frac{n}{2}$, so $\frac{n}{2}$ is even.  This contradicts the hypothesis 
$n \equiv 2 \pmod 4$.

Now suppose $d_1$ and $d_2$ are both even.   Since $n \equiv 2 \pmod 4$, $C_n(d_1,d_2)=2C_{\frac{n}{2}}(\frac{d_1}{2}, \frac{d_2}{2})$,  with one copy induced by the vertices in $X$ and the other by the vertices in $Y$.  Since $\frac{n}{2}$ is odd, in $G$, each vertex in $X$ has exactly one neighbor in $Y$ in addition to its four neighbors in $X$.
Let $x_r$ and $y_r$ be the number of red vertices in $X$ and $Y$ respectively.  Similarly,  let  $x_b$ and $y_b$ be the number of blue vertices in $X$ and $Y$ respectively.   Note that $|X| = \frac{n}{2}$, which is odd, so we may assume without loss of generality that $x_r > x_b$. By Theorem~\ref{thm:blue equals red}, $|R| = |B|$ and we know $|R| + |B| = n$, so $|R| = \frac{n}{2}$.    Therefore, $x_r + y_r = |R| = \frac{n}{2} = |X| = x_r + x_b$ and we have $y_r = x_b$.  Similarly, $x_r = y_b$.

We consider the subgraph $G[X]$ induced by the vertices in $X$ and calculate in two ways the number of red endpoints of edges in $G[X]$.  On the one hand, this quantity is $4x_r$ since each red vertex in $X$ has degree $4$ in $G[X]$.  In this calculation, edges with one red and one blue endpoint are counted at their red endpoint.  On the other hand, we calculate this quantity by considering all vertices in $X$ and counting the number of red vertices they are adjacent to in $G[X]$.  In this way, edges with one red and one blue endpoint are counted at their blue endpoint.
Let $i$ be the number of blue vertices in $X$ whose neighbor in $Y$ is blue.  Such vertices have 3 red neighbors in $X$ (and one blue).  The remaining $x_b-i$ blue vertices in $X$ have a red neighbor in $Y$ and these vertices have two red neighbors in $X$ (and two blue).    Similarly, let $j$ be the number of red vertices in $X$ whose neighbor in $Y$ is blue.  Such vertices have 2 red neighbors in $X$ (and two blue).  The remaining $x_r-j$ red vertices in $X$ have a red neighbor in $Y$ and these vertices have one red neighbor in $X$ (and three blue).

Thus $4x_r = 3i + 2(x_b-i) + 2j + (x_r-j)$. Since the edges between $X$ and $Y$ in $G$ form a matching, we have $i + j = y_b$, so the equation above simplifies to
$3x_r = y_b + 2x_b$.  We showed above that $y_b = x_r$, thus we get $x_r = x_b$.  This contradicts $x_r > x_b$ from above.
\end{proof}

\subsection{Generalized Petersen graphs} \label{sub:peter}

In this section, we characterize those generalized Petersen graphs that are CNBC graphs.   Generalized Petersen graphs are introduced in \cite{W69}. 

\begin{defn} \rm Let $1\leq d\leq \frac{n-1}{2}$. 
The generalized Petersen graph, $GP(n,d)$ consists of a cycle $C_n$ with consecutive vertices 
$c_0,c_1, c_2, \ldots, c_{n-1}$ and additional vertices $\hc_0, \hc_1, \hc_2, \ldots, \hc_{n-1}$, with an edge  between $c_i$ and $\hc_{i}$ for $0\leq i\leq n-1$, and edges between $\hc_i$ and $\hc_{i+d}$ for $0\leq i\leq n-1$, with subscript arithmetic modulo $n$. \end{defn}

Notice that $GP(n,d)$ is 3-regular and $GP(5,2)$ is isomorphic to the Petersen graph. The graph $GP(n,d)$ can be described as the circulant $C_n(1)$ with vertex set 
$\{c_0,c_1,  \ldots, c_{n-1}\}$  together with the circulant $C_n(d)$ with vertex set 
$\{\hc_0,\hc_1, \hc_2, \ldots, \hc_{n-1}\}$ and an edge between $c_i$ and $\hc_i$ for $0\leq i\leq n-1$. 

\begin{prop} If    $GP(n,d)$ is a CNBC graph, then  $n$ is even. 
\label{GP-n-even}
\end{prop}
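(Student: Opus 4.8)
The plan is to invoke the counting results already established for cubic CNBC graphs rather than to argue directly about the structure of $GP(n,d)$. The key observation is structural and was noted immediately after the definition: $GP(n,d)$ is $3$-regular, and it has exactly $2n$ vertices (the $n$ outer vertices $c_0,\dots,c_{n-1}$ together with the $n$ inner vertices $\hc_0,\dots,\hc_{n-1}$). So the whole statement should reduce to an arithmetic fact about $2n$.

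Concretely, I would proceed as follows. First, record that $GP(n,d)$ is cubic with $|V(GP(n,d))| = 2n$. Then suppose $GP(n,d)$ is a CNBC graph and apply Corollary~\ref{3-reg-cor}, which tells us that any cubic CNBC graph $G$ satisfies $|V(G)| \equiv 0 \pmod 4$. Hence $2n \equiv 0 \pmod 4$, which forces $n$ to be even. That is the entire argument; there is no separate case analysis on $d$ needed, and no need to exhibit or rule out colorings.

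I do not anticipate a genuine obstacle here, since Corollary~\ref{3-reg-cor} does all the work; the only thing to be careful about is citing the right prior result. One could alternatively derive the same conclusion from Corollary~\ref{cor:singly even} (either $|V| \equiv 0 \pmod 4$ or $r \equiv 1 \pmod 4$, and $r=3\not\equiv 1 \pmod 4$, forcing $2n\equiv 0\pmod 4$), but invoking Corollary~\ref{3-reg-cor} is the cleanest route. If a more self-contained argument were desired, one could instead observe that a CNB-coloring of a cubic graph has $|R|=|B|$ and that the monochromatic edges form a perfect matching (again Corollary~\ref{3-reg-cor}), but that is precisely what Corollary~\ref{3-reg-cor} packages, so the short proof is preferable.
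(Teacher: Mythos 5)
Your proof is correct and is essentially identical to the paper's: both note that $GP(n,d)$ is cubic with $2n$ vertices and invoke Corollary~\ref{3-reg-cor} to get $2n \equiv 0 \pmod 4$, hence $n$ even. No issues.
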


\begin{proof}
The graph $GP(n,d)$ is $3$-regular and contains $2n$ vertices.  Thus $2n \equiv 0 \pmod 4 $ by Corollary~\ref{3-reg-cor}.  It follows that $n$ is even.
\end{proof}

\begin{figure}[ht]
\centering
\begin{tikzpicture}[scale=.65]
    \def\n{8} 
    \def\k{3} 
    \def\R{3} 
    \def\r{1.5} 

    \foreach \i in {0, 2, 4, 6} {
        \node[circle, draw, fill=blue!30, minimum size=.4cm, inner sep=0pt] (O\i) at ({360/\n * \i}:\R) {\small B};
    }
    \foreach \i in {1, 3, 5, 7} {
        \node[circle, draw, fill=red!30, minimum size=.4cm, inner sep=0pt] (O\i) at ({360/\n * \i}:\R) {\small R};    
    }

    \foreach \i in {0, 2, 4, 6} {
        \node[circle, draw, fill=blue!30, minimum size=.4cm, inner sep=0pt] (I\i) at ({360/\n * \i}:\r) {\small B};
    }
     \foreach \i in {1, 3, 5, 7} {
        \node[circle, draw, fill=red!30, minimum size=.4cm, inner sep=0pt] (I\i) at ({360/\n * \i}:\r) {\small R};
    }

    \foreach \i in {0, 1, ..., 7} {
        \pgfmathtruncatemacro{\nexti}{mod(\i+1,\n)}
        \draw (O\i) -- (O\nexti);
    }

    \foreach \i in {0, 1, ..., 7} {
        \pgfmathtruncatemacro{\nexti}{mod(\i+\k,\n)}
        \draw (I\i) -- (I\nexti);
    }

    \foreach \i in {0, 1, ..., 7} {
        \draw (O\i) -- (I\i);
    }
\end{tikzpicture}
\caption{$GP(8,3)$ with a CNBC coloring.}
\label{fig:GP(8,3)}
\end{figure}
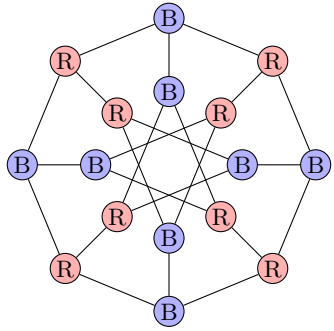

Figure \ref{fig:GP(8,3)} shows the CNBC graph $GP(8,3)$ with the coloring described in the proof of the next proposition. 

\begin{prop}\label{prop:GP(n,d)}
If $n$ is even and $d$ is odd, then $GP(n,d)$ is a CNBC graph.  
\end{prop}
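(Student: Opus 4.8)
The plan is to exhibit an explicit CNB-coloring of $GP(n,d)$ when $n$ is even and $d$ is odd, using the same alternating-parity idea that worked for cubic circulants in Theorem~\ref{thm:cubic circulants}. Since $GP(n,d)$ is $3$-regular, by Corollary~\ref{3-reg-cor} any CNB-coloring must be balanced with each vertex having exactly one neighbor of its own color, so the target is clear: in the closed neighborhood $N[v]$ of each vertex (which has $4$ vertices, namely $v$ and its three neighbors), we need exactly two red and two blue. First I would color each outer vertex $c_i$ and each corresponding inner vertex $\hc_i$ red if $i$ is odd and blue if $i$ is even.

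The verification then splits into checking the two types of vertices. For an outer vertex $c_i$: its neighbors are $c_{i-1}$, $c_{i+1}$ (both of parity opposite to $i$, hence opposite color to $c_i$) and $\hc_i$ (same parity, hence same color). So $N[c_i]$ contains $c_i$ and $\hc_i$ of one color and $c_{i-1}, c_{i+1}$ of the other — balanced. For an inner vertex $\hc_i$: its neighbors are $\hc_{i-d}$, $\hc_{i+d}$, and $c_i$. Since $d$ is odd, $i\pm d$ has parity opposite to $i$, so $\hc_{i-d}$ and $\hc_{i+d}$ are the opposite color to $\hc_i$, while $c_i$ has the same color as $\hc_i$. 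Again $N[\hc_i]$ has two of each color. Hence every closed neighborhood is balanced and $GP(n,d)$ is a CNBC graph.

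There is essentially no obstacle here: the only thing to be careful about is that the parity argument needs $n$ even so that "red iff odd, blue iff even" is a well-defined coloring of $\mathbb{Z}_n$ (otherwise $c_0$ and $c_{n-1}$ would be adjacent and both even-or-both-odd incoherently), and it needs $d$ odd precisely so that the rung step $i\mapsto i\pm d$ flips parity. Both hypotheses are used exactly once, and the rest is the four-line case check above. I would write the proof in two short paragraphs, one per vertex type, after stating the coloring.

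\begin{proof}
Since $n$ is even, we may color each outer vertex $c_i$ and each inner vertex $\hc_i$ red when $i$ is odd and blue when $i$ is even; this is well defined on $\mathbb{Z}_n$. We check that every closed neighborhood is balanced, i.e.\ contains two red and two blue vertices (note $GP(n,d)$ is $3$-regular, so each closed neighborhood has four vertices).

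Consider an outer vertex $c_i$. Its neighbors are $c_{i-1}$, $c_{i+1}$, and $\hc_i$. Since $i-1$ and $i+1$ have parity opposite to $i$, the vertices $c_{i-1}$ and $c_{i+1}$ receive the color opposite to $c_i$, while $\hc_i$ receives the same color as $c_i$. Hence $N[c_i]$ contains exactly two vertices of each color. Now consider an inner vertex $\hc_i$, whose neighbors are $\hc_{i-d}$, $\hc_{i+d}$, and $c_i$. Because $d$ is odd, $i-d$ and $i+d$ have parity opposite to $i$, so $\hc_{i-d}$ and $\hc_{i+d}$ receive the color opposite to $\hc_i$, while $c_i$ receives the same color as $\hc_i$. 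Hence $N[\hc_i]$ also contains exactly two vertices of each color. Therefore this coloring is a CNB-coloring, and $GP(n,d)$ is a CNBC graph.
\end{proof}
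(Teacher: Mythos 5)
Your proof is correct and uses exactly the same coloring as the paper (alternate colors around the outer cycle, with each $\hc_i$ matching $c_i$), verified by the same parity check on $i\pm 1$ and $i\pm d$. No substantive differences.
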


\begin{proof} 
Alternate colors on $c_0,c_1, c_2, c_3, \ldots, c_{n-1}$, and color $\hc_i$ the same color as $c_i$, for $0\leq i\leq n-1$. Since $n$ is even, by parity, each vertex $c_i$ has two neighbors on the $n$-cycle $\{c_0,c_1, c_2, \ldots, c_{n-1}\}$ with the opposite color. Since $d$ is odd, $i-d, i+d$ have the opposite parity of $i$, and hence $c_{i-d}, c_{i+d}$ are colored the opposite color to $c_i$, and $\hc_{i-d}$ and $\hc_{i+d}$ are colored the opposite color to $\hc_i$. Thus, $N[c_i]$ and $N[\hc_i]$ are balanced. 
\end{proof}

\begin{thm}\label{thm:GP(n,d) not}
If $d$ is even, then $GP(n,d)$ is not a CNBC graph.  
\end{thm}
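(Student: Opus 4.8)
The plan is to argue by contradiction. If $n$ is odd then $GP(n,d)$ is not a CNBC graph by Proposition~\ref{GP-n-even}, so I would assume $n$ is even and that $GP(n,d)$ has a CNB-coloring $(R,B)$. Since $GP(n,d)$ is cubic, Corollary~\ref{3-reg-cor} gives that the monochromatic edges form a perfect matching, so every vertex has exactly one incident monochromatic edge and $|R|=|B|=n$. Encode the coloring by $f(i)=1$ if $c_i\in R$ (else $0$) and $g(i)=1$ if $\hat c_i\in R$ (else $0$). Balance at $c_i$ and at $\hat c_i$ says, for every $i$,
\[
f(i-1)+f(i)+f(i+1)+g(i)=2,\qquad g(i-d)+g(i)+g(i+d)+f(i)=2 .
\]
Summing these over all $i$ forces $\sum_i f(i)=\sum_i g(i)=n/2$.

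Next I would extract the local structure. From the first equation $f(i-1)+f(i)+f(i+1)\in\{1,2\}$, so the outer cycle is colored in monochromatic runs of length $1$ or $2$; from the second equation the same holds along each of the $\gcd(n,d)$ disjoint cycles of length $n/\gcd(n,d)$ that make up $C_n(d)$. A vertex lies in a run of length $1$ exactly when its spoke is its (unique) monochromatic edge; in that case $c_i$ and $\hat c_i$ receive the same color, and otherwise opposite colors, so the set $P\subseteq\mathbb Z_n$ of ``spoke-matched'' indices is the same whether read off from the outer cycle or from the inner cycles. Subtracting the two balance equations gives the clean identity $f(i-1)+f(i+1)=g(i-d)+g(i+d)$ for all $i$: the number of red outer-neighbors of $c_i$ equals the number of red inner-neighbors of $\hat c_i$.

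Now I would use that $d$ is even. Substituting the first balance equation into the second eliminates $g$ and yields, for every $i$,
\[
f(i-d-1)+f(i-d)+f(i-d+1)+f(i-1)+f(i+1)+f(i+d-1)+f(i+d)+f(i+d+1)=4 .
\]
Because $d$ is even, exactly two of these eight indices have the parity of $i$ and six have the opposite parity. Summing the identity over all even $i$ (and separately over all odd $i$), and using $\sum_if(i)=n/2$, forces the number of red vertices among $c_0,c_2,c_4,\dots$ to equal the number among $c_1,c_3,\dots$, each equal to $n/4$; the analogous statement holds for the $\hat c_i$. In particular $4\mid n$. The plan is then to iterate this: partition $\mathbb Z_n$ into residue classes modulo successive powers of $2$ and sum the displayed identity together with the identity $f(i-1)+f(i+1)=g(i-d)+g(i+d)$ over each class, tracking the four ``quadrants'' (outer-even, outer-odd, inner-even, inner-odd) simultaneously; this should drive the red count on each class of size $n/2^{k}$ to be $n/2^{k+1}$, an integer for every $k$, which is impossible for finite $n$. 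Equivalently, in Fourier terms, $H=((-1)^{f(i)})_{i\in\mathbb Z_n}\in\{\pm1\}^n$ must be supported on those $n$th roots of unity $\omega$ with $(1+2\cos d\theta)(1+2\cos\theta)=1$ (where $\omega=e^{i\theta}$), and one checks that for $d$ even this equation has no nontrivial $n$th root of unity as a solution, so $H\equiv$ constant, contradicting $\sum_iH(i)=0$.

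The routine part is everything in the first two paragraphs; the main obstacle is upgrading ``$4\mid n$'' to an outright contradiction. The $2$-adic descent has to be organized so that each refinement stays self-contained — the quadrant bookkeeping is delicate because the refined identities couple $f$- and $g$-sums — and it may be cleanest to split into cases according to the parity of $n/\gcd(n,d)$ (i.e., whether the inner cycles are even or odd cycles) and, in the odd case, run a direct parity count of the spoke-matched set $P$ across the $\gcd(n,d)$ inner cycles against its count along the outer cycle. Getting that final divisibility/parity argument airtight for all even $d$ is where the real work lies.
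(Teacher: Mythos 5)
Your setup is sound: the balance equations, the identity $f(i-1)+f(i+1)=g(i-d)+g(i+d)$, the eliminated form $\sum f(i\pm d\pm 1)+f(i-1)+f(i+1)+f(i-d)+f(i+d)=4$, and the parity-class argument giving $4\mid n$ are all correct (I checked the mod-$2$ and mod-$4$ steps, and they do force equal class counts for both $d\equiv 0$ and $d\equiv 2\pmod 4$). But the proof stops exactly where the theorem begins. Both of your proposed endgames reduce, after unwinding, to the single algebraic claim that for $d$ even the quantity $(1+\zeta+\zeta^{-1})(1+\zeta^{d}+\zeta^{-d})-1$ does not vanish at the relevant nontrivial roots of unity $\zeta$ (all $n$th roots for the Fourier route; all $2^k$th roots for the descent, since the circulant system at level $k$ has only the constant solution precisely when this expression is nonzero at every nontrivial $2^k$th root of unity). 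You assert ``one checks'' this, but it is not a routine check: the equation $(1+2\cos\theta)(1+2\cos d\theta)=1$ does have real solutions $\theta$ for every even $d$ (the left side is $9$ at $\theta=0$ and $-4$ at $\theta=\pi$), so ruling out solutions that are rational multiples of $\pi$ requires a genuine argument about vanishing sums of eight roots of unity or about totally real cyclotomic integers. For $d$ odd the expression \emph{does} vanish at $\zeta=-1$ (which is why $GP(n,1)$ etc.\ are CNBC), so any correct argument must use the parity of $d$ at exactly this point; nothing in your sketch does so beyond the first two descent levels. This unproven non-vanishing claim is essentially equivalent in difficulty to the theorem itself, and you acknowledge as much in your final paragraph.

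For contrast, the paper avoids spectral machinery entirely: it classifies the ``spoke pairs'' $(c_j,\hat c_j)$ as monochromatic or bichromatic, shows by local forcing that consecutive monochromatic pairs are impossible, that two monochromatic pairs cannot be separated by exactly two bichromatic pairs, and that in fact exactly every fifth pair is monochromatic, so the coloring is periodic with period $10$; the contradiction then comes from checking that no even residue of $d$ modulo $10$ is compatible with this pattern. If you want to salvage your approach, the cleanest completion is probably the Fourier one, but you would need to actually prove the non-vanishing statement (e.g., via the Conway--Jones classification of short vanishing sums of roots of unity), and that is the real content of the theorem, not a remark.
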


\begin{proof}
Suppose $d$ is even and assume for a contradiction that $(R,B)$ is a CNB-coloring of $GP(n,d)$. Let $C=\{c_0,c_1, c_2, \ldots, c_{n-1}\}$ and $\hat{C} = \{\hc_0,\hc_1, \hc_2, \ldots, \hc_{n-1}\}$.

We will do the proof by paying attention to the pairs $c_j, \hc_j$ that are the same color. We call this a monochromatic pair. Otherwise, a pair is bichromatic.  First, suppose that every such pair is monochromatic. Then $c_{j-1}$ and $c_{j+1}$ are the opposite color of $c_j$, which means that the colors on $C$ alternate. Since $d$ is even, $c_0, c_d, c_{-d}$ are all the same color, and $\hc_0, \hc_d, \hc_{-d}$ are all the same color, and these are the three vertices in $N[\hc_0]$, which is a contradiction. 

Second, suppose that every such pair is bichromatic. Then without loss of generality, if $c_0$ is red, $\hc_0$ is blue and one of $c_{-1}, c_1$ is red, the other blue. By symmetry, without loss of generality, suppose that $c_0$ and $c_1$ are red. Then $c_2$ is blue and $\hc_2$ is red, so $c_3$ is blue. Since $c_2$ and $c_3$ are both blue, then $c_4$ is red, and since $\hc_4$ is blue, then $c_5$ is 
red. This pattern continues, and by induction, $c_j$ is red if $j\equiv 0,1 \pmod 4$ and blue if $j\equiv 2,3 \pmod 4$. Since each pair is bichromatic,  $\hc_j$ is blue if $j\equiv 0,1 \pmod 4$ and red if $j\equiv 2,3 \pmod 4$. 
If $d \equiv  0 \pmod 4$, then $N[\hc_0] $ contains the three blue vertices $\hc_0, \hc_{-d}, \hc_d$, a contradiction, and if $d \equiv 2 \pmod 4$,  then $N[\hc_0]$ contains the three red vertices $c_0, \hc_{-d}, \hc_d$, also a contradiction. Thus, there must be a monochromatic pair. 

\begin{figure}[ht]
    \centering
    \begin{minipage}{0.45\textwidth}
        \centering
        \begin{tikzpicture}[scale=.6]
            \node[circle, draw, fill=blue!20,  minimum size=.4cm, inner sep=0pt, label=above:$c_{j-1}$] (cj-1) at (-1.5,1.5){B};
            \node[circle, draw, fill=red!20,  minimum size=.4cm, inner sep=0pt, label=above:$c_j$] (cj) at (0,1.5){R};
            \node[circle, draw, fill=blue!20,  minimum size=.4cm, inner sep=0pt, label=above:$c_{j+1}$] (cj1) at (1.5,1.5){B};
            \node[circle, draw, fill=red!20,  minimum size=.4cm, inner sep=0pt, label=right:$\hc_j$] (hcj) at (0,0) {R};
            \node[circle, draw, fill=blue!20,  minimum size=.4cm, inner sep=0pt, label=left:$\hc_{j-d}$] (hc{j-d}) at (-1.5,-1){B};
            \node[circle, draw, fill=blue!20,  minimum size=.4cm, inner sep=0pt, label=right:$\hc_{j+d}$] (hc{j+d}) at (1.5,-1){B};
            
            \draw (hcj) -- (cj);
            \draw (hcj) -- (hc{j-d});
            \draw (hcj) -- (hc{j+d});
            \draw (cj-1) -- (cj) -- (cj1);
        \end{tikzpicture}
        \caption{Observation (i).}
        \label{fig:obs1}
    \end{minipage}
    \hfill
    \begin{minipage}{0.45\textwidth}
        \centering
        \begin{tikzpicture}[scale=.6]
            \node[circle, draw, fill=red!20,  minimum size=.4cm, inner sep=0pt, label=above:$c_j$] (cj) at (0,1.5){R};
            \node[circle, draw, fill=red!20,  minimum size=.4cm, inner sep=0pt, label=below:$\hc_j$] (hcj) at (0,0) {R};

            \node[circle, draw, fill=blue!20,  minimum size=.4cm, inner sep=0pt, label=above:$c_{j+1}$] (cj1) at (1.5,1.5){B};
            \node[circle, draw, fill=blue!20,  minimum size=.4cm, inner sep=0pt, label=below:$\hc_{j+1}$] (hcj1) at (1.5,0) {B};

                \node[circle, draw, fill=red!20,  minimum size=.4cm, inner sep=0pt, label=above:$c_{j+2}$] (cj2) at (3,1.5){R};
            \node[circle, draw, fill=red!20,  minimum size=.4cm, inner sep=0pt, label=below:$\hc_{j+2}$] (hcj2) at (3,0) {R};

            \draw (hcj) -- (cj);
            \draw (hcj1) -- (cj1);
            \draw (hcj2) -- (cj2);
            \draw (cj)--(cj1)--(cj2);
        \end{tikzpicture}        
        \caption{Observation (ii).}
        \label{fig:obs2}
    \end{minipage}
\end{figure}

\begin{figure}[ht]
    \centering
    \begin{minipage}{0.45\textwidth}
        \centering
        \begin{tikzpicture}[scale=.6]
            \node[circle, draw, fill=red!20,  minimum size=.4cm, inner sep=0pt, label=above:$c_{j+2}$] (cj2) at (3,1.5){R};
            \node[circle, draw, fill=blue!20,  minimum size=.4cm, inner sep=0pt, label=below:$\hc_j$] (hcj) at (0,0) {B};
            
            \node[circle, draw, fill=red!20,  minimum size=.4cm, inner sep=0pt, label=above:$c_j$] (cj) at (0,1.5){R};
            \node[circle, draw, fill=blue!20,  minimum size=.4cm, inner sep=0pt, label=below:$\hc_{j+1}$] (hcj1) at (1.5,0) {B};
            \node[circle, draw, fill=blue!20,  minimum size=.4cm, inner sep=0pt, label=above:$c_{j+1}$] (cj1) at (1.5,1.5){B};
            \node[circle, draw, fill=red!20,  minimum size=.4cm, inner sep=0pt, label=below:$\hc_{j+2}$] (hcj2) at (3,0) {R};
            \node[circle, draw, fill=red!20,  minimum size=.4cm, inner sep=0pt, label=above:$c_{j+2}$] (cj2) at (3,1.5){R};

            \draw (hcj) -- (cj);
            \draw (hcj1) -- (cj1);
            \draw (hcj2) -- (cj2);
            \draw (cj)--(cj1)--(cj2);

            \node at (4, .75) {or}; 

            \node[circle, draw, fill=red!20,  minimum size=.4cm, inner sep=0pt, label=above:$c_j$] (cj) at (5,1.5){R};
            \node[circle, draw, fill=blue!20,  minimum size=.4cm, inner sep=0pt, label=below:$\hc_j$] (hcj) at (5,0) {B};
            
            \node[circle, draw, fill=red!20,  minimum size=.4cm, inner sep=0pt, label=above:$c_{j+1}$] (cj1) at (6.5,1.5){R};
            \node[circle, draw, fill=blue!20,  minimum size=.4cm, inner sep=0pt, label=below:$\hc_{j+1}$] (hcj1) at (6.5,0) {B};

            \node[circle, draw, fill=blue!20,  minimum size=.4cm, inner sep=0pt, label=above:$c_{j+2}$] (cj2) at (8,1.5){B}; \node[circle, draw, fill=red!20,  minimum size=.4cm, inner sep=0pt, label=below:$\hc_{j+2}$] (hcj2) at (8,0) {R};
            
            \draw (hcj) -- (cj);
            \draw (hcj1) -- (cj1);
            \draw (hcj2) -- (cj2);
            \draw (cj)--(cj1)--(cj2);
        \end{tikzpicture}        
        \caption{Observation (iii).}
        \label{fig:obs3}
    \end{minipage}
    \hfill
    \begin{minipage}{0.45\textwidth}
        \centering
        \begin{tikzpicture}[scale=.6]
            \node[circle, draw, fill=red!20,  minimum size=.4cm, inner sep=0pt, label=above:$c_j$] (cj) at (0,1.5){R};
            \node[circle, draw, fill=red!20,  minimum size=.4cm, inner sep=0pt, label=below:$\hc_j$] (hcj) at (0,0) {R};

            \node[circle, draw, fill=blue!20,  minimum size=.4cm, inner sep=0pt, label=above:$c_{j+1}$] (cj1) at (1.5,1.5){B};
            \node[circle, draw, fill=blue!20,  minimum size=.4cm, inner sep=0pt, label=below:$\hc_{j+1}$] (hcj1) at (1.5,0) {B};

            \node[circle, draw, fill=red!20,  minimum size=.4cm, inner sep=0pt, label=above:$c_{j+2}$] (cj2) at (3,1.5){R};  \node[circle, draw, fill=blue!20,  minimum size=.4cm, inner sep=0pt, label=below:$\hc_{j+2}$] (hcj2) at (3,0) {B};

            \draw (hcj) -- (cj);
            \draw (hcj1) -- (cj1);
            \draw (hcj2) -- (cj2);
            \draw (cj)--(cj1)--(cj2);

             \node at (4, .75) {or}; 

            \node[circle, draw, fill=blue!20,  minimum size=.4cm, inner sep=0pt, label=above:$c_j$] (cj) at (5,1.5){B};
            \node[circle, draw, fill=red!20,  minimum size=.4cm, inner sep=0pt, label=below:$\hc_j$] (hcj) at (5,0) {R};

            \node[circle, draw, fill=red!20,  minimum size=.4cm, inner sep=0pt, label=above:$c_{j+1}$] (cj1) at (6.5,1.5){R};
            \node[circle, draw, fill=blue!20,  minimum size=.4cm, inner sep=0pt, label=below:$\hc_{j+1}$] (hcj1) at (6.5,0) {B};

            \node[circle, draw, fill=red!20,  minimum size=.4cm, inner sep=0pt, label=above:$c_{j+2}$] (cj2) at (8,1.5){R};
            \node[circle, draw, fill=blue!20,  minimum size=.4cm, inner sep=0pt, label=below:$\hc_{j+2}$] (hcj2) at (8,0) {B};

            \draw (hcj) -- (cj);
            \draw (hcj1) -- (cj1);
            \draw (hcj2) -- (cj2);
            \draw (cj)--(cj1)--(cj2);
        \end{tikzpicture}        
        \caption{Observation (iv).}
        \label{fig:obs4}
    \end{minipage}
\end{figure}

\begin{figure}[ht]
    \centering
    \begin{minipage}{0.45\textwidth}
        \centering
    \begin{tikzpicture}[scale=.6]

        \node[circle, draw, fill=red!20,  minimum size=.4cm, inner sep=0pt, label=above:$c_{j-3}$] (cj-3) at (-4.5,1.5){R};

        \node[circle, draw, fill=blue!20,  minimum size=.4cm, inner sep=0pt, label=above:$c_{j-2}$] (cj-2) at (-3,1.5){B};
        \node[circle, draw, fill=red!20,  minimum size=.4cm, inner sep=0pt, label=below:$\hc_{j-2}$] (hcj-2) at (-3,0) {R};

        \node[circle, draw, fill=blue!20,  minimum size=.4cm, inner sep=0pt, label=above:$c_{j-1}$] (cj-1) at (-1.5,1.5){B};
        \node[circle, draw, fill=red!20,  minimum size=.4cm, inner sep=0pt, label=below:$\hc_{j-1}$] (hcj-1) at (-1.5,0) {R};

        \node[circle, draw, fill=red!20,  minimum size=.4cm, inner sep=0pt, label=above:$c_j$] (cj) at (0,1.5){R};
        \node[circle, draw, fill=red!20,  minimum size=.4cm, inner sep=0pt, label=below:$\hc_j$] (hcj) at (0,0) {R};

        \node[circle, draw, fill=blue!20,  minimum size=.4cm, inner sep=0pt, label=above:$c_{j+1}$] (cj1) at (1.5,1.5){B};
        \node[circle, draw, fill=red!20,  minimum size=.4cm, inner sep=0pt, label=below:$\hc_{j+1}$] (hcj1) at (1.5,0) {R};

        \node[circle, draw, fill=blue!20,  minimum size=.4cm, inner sep=0pt, label=above:$c_{j+2}$] (cj2) at (3,1.5){B};  \node[circle, draw, fill=red!20,  minimum size=.4cm, inner sep=0pt, label=below:$\hc_{j+2}$] (hcj2) at (3,0) {R};

        \node[circle, draw, fill=red!20,  minimum size=.4cm, inner sep=0pt, label=above:$c_{j+3}$] (cj3) at (4.5,1.5){R};  
            
            \draw (hcj) -- (cj);
            \draw (hcj1) -- (cj1);
            \draw (hcj2) -- (cj2);
            \draw (hcj-1) -- (cj-1);
            \draw (hcj-2) -- (cj-2);
            \draw (cj-3)--(cj-2)--(cj-1)--(cj)--(cj1)--(cj2)--(cj3);
    \end{tikzpicture}        
    \caption{Observation (v).}
    \label{fig:obs5}
    \end{minipage}
\end{figure}

In the rest of the proof, we consider cases based on the locations of monochromatic pairs. The following observations are pictured in Figures \ref{fig:obs1} through \ref{fig:obs5}. Note that the colors red and blue can be interchanged in each one to get a new statement. 

\begin{enumerate}[(i)]

    \item \label{a-both} Suppose $c_j$ and $\hc_j$ are both red. Then $c_{j-1}, c_{j+1}, \hc_{j-d}$ and $\hc_{j+d}$ are all blue. 
    
    \item \label{3-equa} Suppose that the colors of  $(\hc_j, \hc_{j+1}, \hc_{j+2})$ are (red, blue, red). We claim that the colors of $c_j, c_{j+1}, c_{j+2}$ are (red, blue, red) and the pairs $c_j,\hc_j$; $c_{j+1}, \hc_{j+1}$; $c_{j+2}, \hc_{j+2}$  are consecutive monochromatic pairs. The claim follows because if $c_{j+1}$ is red, then $c_j$ and $c_{j+2}$ each have two red neighbors, and are therefore both blue, but then $N[c_{j+1}]$ contains the three blue vertices, $c_j, c_{j+2}$, and $\hc_{j+1}$, which is a contradiction. 
    
    \item \label{bbred} Suppose that the colors of  $(\hc_j, \hc_{j+1}, \hc_{j+2})$ are (blue, blue, red).
    Then the colors of $c_j, c_{j+1}, c_{j+2}$ are (red, blue, red) or (red, red, blue), since $c_j$ cannot be blue. 
    \item \label{redbb} Suppose that the colors of  $(\hc_j, \hc_{j+1}, \hc_{j+2})$ are (red, blue, blue).
     Then the colors of $c_j, c_{j+1}, c_{j+2}$ are (red, blue, red) or (blue, red, red), since $c_{j+2}$ cannot be blue. 
  
    \item \label{equa-no-consec} Suppose that $c_j, \hc_j$ are red and $c_{j-1}, \hc_{j-1}$ and $c_{j+1}, \hc_{j+1}$ are both bichromatic pairs. Then $c_{j-2},c_{j-1}, c_{j+1}, c_{j+2}$ are blue and $c_{j-3}, \hc_{j-2}, \hc_{j-1}, \hc_{j+1}, \hc_{j+2}, c_{j+3}$ are red. 
\end{enumerate}

Suppose that  the longest consecutive sequence of monochromatic pairs has length at least $d+1$, and assume that the $d+1$ pairs $c_j, \hc_j$ for $0\leq j\leq d$ are monochromatic.  We observe that by~(\ref{a-both}), and using induction, the colors of $c_0, c_1, \ldots, c_d$ alternate, and since the pairs are monochromatic, the colors of $\hc_0, \hc_1, \ldots, \hc_d$ alternate also. Without loss of generality, suppose $c_0,\hc_0$ are red. Then $N[\hc_0]$ contains $c_0, \hc_0$ and $\hc_d$, and all three are red. This contradicts the assumption that the coloring is a CNB-coloring, and shows that there are no more than $d$ consecutive monochromatic pairs. 

Now, assume that the longest consecutive sequence of monochromatic pairs has length $t$, with $1\leq t\leq d$. If $t\geq 2$, then without loss of generality, assume that the $t$ monochromatic pairs are 
$c_j, \hc_j$ for $0\leq j\leq t-1$. We observe that by~(\ref{a-both}), the colors of $\hc_{j-d}, \hc_{j+d}$ are the same for $0\leq j\leq t-1$, and the colors of $\hc_{j-d}$ (and $\hc_{j+d}$)  alternate for $0\leq j\leq t-1$, so both sequences are 
consecutive sequences of monochromatic pairs of length $t$. Without loss of generality, let $c_{t-2}, \hc_{t-2}$ be blue and $c_{t-1}, \hc_{t-1}$ be red. Then $c_t$ must be blue and $\hc_t$ must be red. By~(\ref{a-both}), we have $\hc_{t-2-d}, \hc_{t-2+d}$ are red and $\hc_{t-1-d},\hc_{t-1+d}$ are blue. Since $\hc_{t}$ is 
red and is adjacent to the blue vertex $c_t$, one of $\hc_{t-d}, \hc_{t+d}$ is red and the other is blue. Then since the colors of one of $(\hc_{t-2-d},\hc_{t-1-d},\hc_{t-d})$ or $(\hc_{t-2+d},\hc_{t-1+d},\hc_{t+d})$ are (red, blue, red), then  
by~(\ref{3-equa}), there are three consecutive monochromatic pairs either for $t-2-d\leq j\leq t-d$ or $t-2+d\leq j\leq t+d$. This shows that there is a consecutive sequence of monochromatic pairs of length at least $t+1$, contradicting our choice of $t$. 
Thus, in any CNB-coloring, there can be no consecutive monochromatic pairs.

\begin{table}[ht]\centering
\begin{tabular}{c|ccccccccccc}
$j$ & $-5$ &  $-4$ & $-3$ & $-2$ & $-1$ & $0$ & $1$ & $2$ & $3$ & $4$  & $5$ \\ \hline 
$c_j$ & &  & red & blue & blue & {\color{red} red} & blue & blue & red &  & \\
$\hc_j$ & &  & & red & red & {\color{red} red} & red & red &&  &  \\ \hline
$c_{j-d}$ & &  & &  &  &  & & &  &   &   \\
$\hc_{j-d}$&  &   & &  &  & blue  & & & &  & \\ \hline 
$c_{j+d}$ &&  & &  &  &  & & &  &   &    \\
$\hc_{j+d}$&  &  & &  &  & blue & & &  &   & \\ \hline 
\end{tabular}\caption{Assume that $c_0, \hc_0$ are both red, then we fill in colors of other vertices using (\ref{equa-no-consec}).}\label{tab:1}
\end{table}

Without loss of generality, suppose that $c_0, \hc_0$ are red. By~(\ref{equa-no-consec}), we have $c_{-2}, c_{-1}, c_1, c_2$, $\hc_{-d}, \hc_d$ are blue, and $c_{-3}, \hc_{-2}, \hc_{-1},\hc_1, \hc_2,c_3$ are red. These are shown in the Table \ref{tab:1}.  

Since $\hc_1$ is red with a blue neighbor, one of $\hc_{1-d}, \hc_{1+d}$ is red 
and the other blue. By symmetry, and without loss of generality, let $\hc_{1-d}$ be red and $\hc_{1+d}$ be blue. By~\ref{3-equa}, if $(\hc_{-1-d}, \hc_{-d}, \hc_{1-d})=$ (red, blue, red), then $(c_{-1-d}, c_{-d}, c_{1-d})=$ (red, blue, red) and this makes 3 consecutive monochromatic pairs. Thus, $\hc_{-1-d}$ is blue. By~(\ref{bbred}), the colors of $(c_{-1-d}, c_{-d}, c_{1-d})$ are (red, blue, red) or (red, red, blue). However, the colors cannot be (red, blue, 
red), because that will yield two consecutive monochromatic pairs, so they must be (red, red, blue). Thus, $c_{-2-d}, c_{2-d}$ are blue, and $\hc_{2-d}, c_{3-d}$ are red. Since $\hc_{-1}$ is red, and its neighbors $c_{-1}, \hc_{-1-d}$ are blue, then $\hc_{-1+d}$ is red. Now the colors of $(\hc_{-1+d}, \hc_{d}, \hc_{1+d})$ are (red, blue, blue). By~(\ref{redbb}), the colors of  
$(c_{-1+d}, c_{d}, c_{1+d})$ are (red, blue, red) or (blue, red, red). They cannot be (red, blue, red), because that will yield two consecutive monochromatic pairs, so they must be (blue, red, red). Then $c_{1+d}$ is red and 
has a red neighbor, so $c_{2+d}$ is blue, and $c_{-1+d}$ is blue and has two red neighbors, so $c_{-2+d}$ is blue, and $\hc_{-2+d}, c_{-3+d}$ are red. Since $\hc_{-2}$ is red and has a red neighbor, $\hc_{-2+d}$, then $\hc_{-2-
d}$ is blue, and $c_{-2-d}, \hc_{-2-d}$ is a monochromatic pair, and we use (\ref{equa-no-consec}) to fill in the colors to the left. 
Since $\hc_2$ is red and has a red neighbor $\hc_{2-d}$, then $\hc_{2+d}$ is blue, and $c_{2+d}, \hc_{2+d}$ is a monochromatic pair, and we use (\ref{equa-no-consec}) to fill in the colors to the right. 
We record these colors in the Table \ref{tab:2}. 

\begin{table}[ht]\centering
\begin{tabular}{c|ccccccccccc}
$j$ & $-5$ &  $-4$ & $-3$ & $-2$ & $-1$ & $0$ & $1$ & $2$ & $3$ & $4$  & $5$ \\ \hline 
$c_j$ & &  & red & blue & blue & red & blue & blue & red &  & \\
$\hc_j$ & &  & & red & red & red & red & red &&  &  \\ \hline
$c_{j-d}$ & blue &  red & red & blue & red & red & blue & blue & red&  &  \\
$\hc_{j-d}$&  & blue  & blue & blue & blue & blue & red & red & &  & \\ \hline 
$c_{j+d}$ &&  &  red & blue & blue & red & red & blue & red & red & blue \\
$\hc_{j+d}$&  &  & & red & red & blue & blue &  blue & blue & blue  & \\ \hline 
\end{tabular}\caption{The colors that result by assuming that $\hc_{1-d}$ is red and $\hc_{1+d}$ is blue.}\label{tab:2}
\end{table}

Now $\hc_3$ is adjacent to a red vertex, $c_3$, and a blue vertex, $\hc_{3+d}$. Thus, either $\hc_3$ is red and $\hc_{3-d}$ is blue or the reverse. However, if $\hc_3$ is red and $\hc_{3-d}$ is blue, then $c_4$ is blue and $c_{4-d}$ is red and $\hc_{4-d}$ is blue. Since $\hc_{4+d}$ is blue, then $\hc_4$ has three blue neighbors: $c_4,\hc_{4-d}, \hc_{4+d}$. This is a contradiction, so $\hc_3$ must be blue. A similar proof shows that $\hc_{-3}$ is blue. Since we started with a generic monochromatic pair $c_0,\hc_0$, and proved that the pairs $c_3,\hc_3$ and $c_{-3}, \hc_{-3}$ are bichromatic, then we have proved that no two monochromatic pairs are separated by exactly two bichromatic pairs. 

Since $\hc_3$ is blue, then $c_4$ is red and $\hc_4, c_5$ are blue. Similarly, $c_{-4}$ is red and $\hc_{-4}, c_{-5}$ are blue. Then the colors of the neighbors of $\hc_{-3+d}$ and $\hc_{3-d}$ are determined, and both are red. Hence $c_{-3+d}, \hc_{-3+d}$ and $c_{3-d}, \hc_{3-d}$ are monochromatic pairs. We fill in these entries in the Table \ref{tab:3} using (\ref{equa-no-consec}). Now $c_{2+d}, \hc_{2+d}$ is a monochromatic pair, therefore $c_{5+d}, \hc_{5+d}$ is not monochromatic, so $\hc_{5+d}$ is red. This determines the color of $\hc_5$, and it is blue, and $c_5,\hc_5$ is a monochromatic pair. By similar reasoning, $\hc_{-5}$ is blue, and $c_{-5}, \hc_{-5}$ is a monochromatic pair. Since we started with a generic monochromatic pair $c_0,\hc_0$, and proved that the next monochromoatic pair in either direction occurs after four bichromatic pairs, it must be that every fifth consecutive pair is monochromatic, and the rest are bichromatic. These colors are also recorded in the Table \ref{tab:3}. 

\begin{table}[ht]\centering
\begin{tabular}{c|ccccccccccc}
$j$ & $-5$ &  $-4$ & $-3$ & $-2$ & $-1$ & $0$ & $1$ & $2$ & $3$ & $4$  & $5$ \\ \hline 
$c_j$ & blue & red & red & blue & blue & red & blue & blue & red & red & blue   \\
$\hc_j$ & blue & blue   & blue & red & red & red & red & red & blue &  blue &  blue \\ \hline
$c_{j-d}$ & blue &  red & red & {\color{blue} blue} & red & red & blue & blue & {\color{red} red} & blue  & blue \\
$\hc_{j-d}$& red & blue  & blue & {\color{blue}blue} & blue & blue & red & red & {\color{red} red}  & red & red   \\ \hline 
$c_{j+d}$ & blue & blue & {\color{red} red} & blue & blue & red & red &{\color{blue} blue} & red & red & blue  \\
$\hc_{j+d}$& red & red  &  {\color{red} red}  & red & red & blue & blue & {\color{blue} blue} & blue & blue &  red \\ \hline 
\end{tabular}\caption{The coloring with four bichromatic pairs in between two monochromatic pairs.}\label{tab:3}
\end{table}

Starting with any red monochromatic pair $c_j, \hc_j$, the coloring repeats every ten vertices, as shown in Table \ref{tab:4}. Since $d$ is even, $d\equiv 0,2,4,6,8 \pmod {10}$. Now a red vertex in $\hat{C}$ in a monochromatic pair must have two blue neighbors in $\hat{C}$. The only even values that are blue are 4 and 6, so both $d$ and  $-d$  must be congruent to 4 or 6$\pmod {10}$. However, $\hc_{j+1}$ is red and adjacent to the blue vertex $c_{j+1}$,  so $\hc_{j+1}$ has a red and a blue neighbor in $\hat{C}$, but  $1+4=5$ and $1+6=7$ and both of these vertices are blue. Thus, this coloring is not a CNB-coloring, and this contradiction completes the proof.  

\begin{table}[ht]\centering
\begin{tabular}{c|ccccccccccc}
$j$ &  $0$ & $1$ & $2$ & $3$ & $4$  & $5$ & $6$ & $7$ & $8$ & $9$\\ \hline 
$c_j$ &  red & blue & blue & red & red & blue & red & red & blue & blue    \\
$\hc_j$ &  red & red & red & blue &  blue & blue & blue & blue & red & red  \\ 
\end{tabular}\caption{Repeated coloring every ten vertices.}\label{tab:4}
\end{table}
\end{proof}

The next corollary combines the results of Proposition~\ref{GP-n-even}, Proposition~\ref{prop:GP(n,d)}, and Theorem~\ref{thm:GP(n,d) not} to  get a complete characterization of those generalized Petersen graphs that are CNBC graphs.

\begin{cor}
The generalized Petersen graph $GP(n,d)$ is a CNBC graph if and only if $n$ is even and $d$ is odd.
\end{cor}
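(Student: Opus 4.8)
The plan is simply to assemble the three preceding results, since each direction of the equivalence is already covered. For the forward implication I would assume that $GP(n,d)$ is a CNBC graph and deduce both halves of the conclusion separately: Proposition~\ref{GP-n-even} gives immediately that $n$ must be even, and the contrapositive of Theorem~\ref{thm:GP(n,d) not} gives that $d$ cannot be even, hence $d$ is odd. For the reverse implication I would assume $n$ is even and $d$ is odd and invoke Proposition~\ref{prop:GP(n,d)}, which exhibits an explicit CNB-coloring (alternate colors along the outer cycle $c_0,c_1,\ldots,c_{n-1}$ and copy the color of $c_i$ onto $\hc_i$), so $GP(n,d)$ is a CNBC graph.

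There is no real obstacle remaining at this stage: all of the substantive combinatorial work has already been carried out, most notably the long case analysis on monochromatic versus bichromatic pairs $c_j,\hc_j$ that constitutes the proof of Theorem~\ref{thm:GP(n,d) not}. The only thing to be careful about when writing this up is logical bookkeeping — making sure that ``$n$ even'' and ``$d$ odd'' are each established from the correct prior result, and that the two implications are clearly separated — rather than any new mathematical difficulty. A one-line remark that the three cited statements are exhaustive (every $d$ with $1\le d\le\frac{n-1}{2}$ is either even or odd, and $n$ is either even or odd) suffices to close the argument.

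I would therefore write: ($\Rightarrow$) If $GP(n,d)$ is a CNBC graph, then $n$ is even by Proposition~\ref{GP-n-even}; if $d$ were even then $GP(n,d)$ would not be a CNBC graph by Theorem~\ref{thm:GP(n,d) not}, a contradiction, so $d$ is odd. ($\Leftarrow$) If $n$ is even and $d$ is odd, then $GP(n,d)$ is a CNBC graph by Proposition~\ref{prop:GP(n,d)}. This completes the characterization.
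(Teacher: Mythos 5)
Your proposal is correct and matches the paper exactly: the corollary is stated there as the immediate combination of Proposition~\ref{GP-n-even} (forcing $n$ even), Theorem~\ref{thm:GP(n,d) not} (ruling out even $d$), and Proposition~\ref{prop:GP(n,d)} (supplying the coloring when $n$ is even and $d$ is odd). Nothing further is needed.
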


\section{Cartesian product and strong product} \label{sub:cartesian-strong} 
In this section we study the Cartesian product and strong product of CNBC and NBC graphs.  

Recall that the \emph{Cartesian product} of graphs $G$ and $H$, denoted by $G \squarespace H$, is the graph with vertex set $V(G \squarespace H) = \{(g,h): g \in V(G), h \in V(H)\}$  where two vertices $(g,h)$ and $(g', h')$ are adjacent in $G \squarespace H$ if and only if (1) $g=g'$ and $hh' \in E(H)$ or (2) $h=h'$ and $gg' \in E(G)$. We refer to these as edges of the first and second types.  By definition, vertex $(g,h)$ in $G \squarespace H$ has degree $\deg_G(g) + \deg_H(h)$. 

In \cite{FM24}, the authors show that the Cartesian product of NBC graphs is an NBC graph. If $G$ and $H$ are CNBC  graphs then by Observation~\ref{obs:odddegree}, each vertex in $G$ and each vertex in $H$ has odd degree, so each vertex in $G \squarespace H$ has even degree and $G \squarespace H$  is not a CNBC graph.  Similarly, the Cartesian product of a CNBC graph with an NBC graph cannot be an NBC graph, but in the next theorem we show that it is a CNBC graph.

\begin{thm}
    If $G$ is a CNBC graph and $H$ is an NBC graph, then $G \squarespace H$ is a CNBC graph. 
    \label{cnbc-box-nbc}
\end{thm}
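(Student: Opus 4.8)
The plan is to construct a CNB-coloring of $G \squarespace H$ explicitly from a CNB-coloring of $G$ and an NB-coloring of $H$ by taking their ``parity sum.'' Encode colors as elements of $\{0,1\}$ (say $0$ for red, $1$ for blue). Fix a CNB-coloring of $G$ as a function $c_G : V(G) \to \{0,1\}$ and an NB-coloring of $H$ as $c_H : V(H) \to \{0,1\}$, and define a coloring $c$ of $G \squarespace H$ by $c(g,h) = c_G(g) + c_H(h) \pmod 2$. I would then claim this is a CNB-coloring and verify it neighborhood-by-neighborhood.

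The first step is to record the decomposition of a closed neighborhood in the Cartesian product. By the definition of $G \squarespace H$ (edges of the first and second types), for any vertex $(g,h)$ we have
\[
N_{G \squarespace H}[(g,h)] \;=\; \bigl\{(g',h) : g' \in N_G[g]\bigr\} \;\cup\; \bigl\{(g,h') : h' \in N_H(h)\bigr\},
\]
and this union is disjoint, since every vertex in the first set has second coordinate $h$ while every vertex in the second set has second coordinate $h' \neq h$; note $(g,h)$ itself sits in the first set because $g \in N_G[g]$. The second step is to check each piece is balanced under $c$. On the first set the color of $(g',h)$ is $c_G(g') + c_H(h) \pmod 2$, which as $g'$ ranges over $N_G[g]$ equals either $c_G(g')$ or its complement depending only on $c_H(h)$; in either case reds and blues are equinumerous because $c_G$ is a CNB-coloring and $N_G[g]$ is balanced. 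On the second set the color of $(g,h')$ is $c_G(g) + c_H(h') \pmod 2$, which as $h'$ ranges over $N_H(h)$ equals either $c_H(h')$ or its complement depending only on $c_G(g)$; in either case reds and blues are equinumerous because $c_H$ is an NB-coloring and $N_H(h)$ is balanced. Adding the counts over the two disjoint balanced pieces shows $N_{G \squarespace H}[(g,h)]$ is balanced, so $c$ is a CNB-coloring.

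I do not expect a serious obstacle here once the coloring is chosen correctly; the one thing to get right is the observation that the closed neighborhood of a vertex in $G \squarespace H$ splits into a ``$G$-slice'' that is a \emph{closed} neighborhood in $G$ (handled by the CNBC hypothesis on $G$) and an ``$H$-slice'' that is an \emph{open} neighborhood in $H$ (handled by the NBC hypothesis on $H$). This asymmetry — the vertex $(g,h)$ belongs to its own $G$-slice but not its own $H$-slice — is precisely why one factor must be CNBC and the other NBC, consistent with the parity-of-degree remarks made just before the theorem.
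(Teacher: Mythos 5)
Your proposal is correct and uses the same coloring as the paper (the parity/XOR of the two colorings) with essentially the same verification: the paper likewise splits the closed neighborhood of $(g,h)$ into the vertices $(g',h)$ with $g'\in N_G[g]$ and the vertices $(g,h')$ with $h'\in N_H(h)$, and balances each piece separately. Your mod-2 phrasing is a slightly cleaner packaging of the paper's case analysis on the colors of $g$ and $h$, but it is the same argument.
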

\begin{proof}
Let $G$ be a graph with a CNB-coloring $(R_G,B_G)$, and $H$ be a graph with NB-coloring $(R_H,B_H)$. Define $(R,B)$ on $V(G)\times V(H)$ such that the color of $(g,h)$ is blue if $g\in R_G$ and $h\in R_H$ or $g\in B_G$ and $h\in B_H$, and the color of $(g,h)$ is red if $g\in R_G$ and $h\in B_H$ or $g\in B_G$ and $h\in R_H$. We will prove that $(R,B)$ is a CNB-coloring for $G \squarespace H$.

Consider $(g,h) \in V(G \squarespace H)$. Without loss of generality, suppose $g\in B_G$.  Since $(R_H,B_H)$ is an NB-coloring of $H$, there exists $j$ so that $(g,h)$ has  exactly $j$ neighbors in $R$ of the form $(g, h')$ and $j$ neighbors in $B$ of the form $(g,h')$.  Its remaining neighbors have the form $(g',h)$. Since $(R_G,B_G)$ is a CNB-coloring of $G$ we know that $g$ has $k$ neighbors in $B_G$ and $k+1$  neighbors in $R_G$ for some $k$.  If $h\in B_H$, then $(g,h)\in B$ and it has $k$ neighbors in $B$ of the form $(g',h)$ and $k+1$ neighbors in $R$ of the form $(g',h)$.    If $h\in R_H$, then $(g,h)\in R$ and it has $k$ neighbors in $R$ of the form $(g',h)$ and $k+1$ neighbors in $B$ of the form $(g',h)$.  Thus $(g,h)$ has $j+k$ neighbors of its own color and $j+k+1$ neighbors of the opposite color, so  $(R,B)$ is a CNB-coloring of $G \squarespace H$.
\end{proof}

The Cartesian product of $K_2$ with a cycle is sometimes called a prism, and was defined in \cite{BDS72}. 
Let $Y_n= K_2 \squarespace C_n$. In the next example, we describe   CNB-colorings of $Y_n$, and in Proposition~\ref{prop:prisms} we show that these are the only possible CNB-colorings of $Y_n$.

\begin{example} \rm \label{exa:prisms}  
When $n$ is even, observe that $Y_n$ is isomorphic to $GP(n,1)$. Thus, one CNB-coloring of $Y_n$ is the coloring from the proof of Theorem~\ref{thm:GP(n,d) not} where the vertices of each $C_n$ are alternately red and blue, and each vertex in one copy of $C_n$ has the same color as its neighbor in the other copy of $C_n$.  
 
When $n \equiv 0 \pmod 4$,  we get a second CNB-coloring of $Y_n$.  The proof of Theorem~2 in \cite{FM24} provides a CNB-coloring of $C_n$ where edges that have two red endpoints alternate with edges that have two blue endpoints. We use this coloring on one copy of $C_n$ and the opposite coloring on the other copy of $C_n$, described in the proof of Theorem~\ref{cnbc-box-nbc}.
 This coloring is different from the first one since the coloring on the $C_n$'s is the opposite instead of the same. 
\end{example}

\begin{prop} \label{prop:prisms}  All of the CNB-colorings of $Y_n$ are given in Example~\ref{exa:prisms}. 
    
\end{prop}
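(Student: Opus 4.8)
The plan is to fix an arbitrary CNB-coloring $(R,B)$ of $Y_n = K_2\squarespace C_n$ and show it must be one of the colorings described in Example~\ref{exa:prisms}. Label the two copies of $C_n$ as having vertices $u_0,u_1,\ldots,u_{n-1}$ and $w_0,w_1,\ldots,w_{n-1}$, where $u_iw_i$ is a rung of the prism and $u_iu_{i+1}$, $w_iw_{i+1}$ are cycle edges (indices mod $n$). Each vertex has degree $3$, so by Corollary~\ref{3-reg-cor} the monochromatic edges form a perfect matching of $Y_n$; in particular for each $i$ exactly one of the three edges at $u_i$ is monochromatic, and likewise at $w_i$. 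This matching structure is the main tool.

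First I would record the local constraint at a vertex: since $\deg(u_i)=3$ and $N[u_i]$ is balanced, $u_i$ has exactly one neighbor of its own color among $\{u_{i-1},u_{i+1},w_i\}$. So the key case split is on which rungs $u_iw_i$ are monochromatic (call index $i$ a \emph{match index} if $u_iw_i$ is monochromatic). I expect two cases to fall out. \textbf{Case 1: every rung is monochromatic.} Then at $u_i$ the own-color neighbor is $w_i$, so $u_{i-1},u_{i+1}$ have the opposite color from $u_i$; hence the colors alternate along the $u$-cycle, which forces $n$ even, and similarly along the $w$-cycle with $w_i$ matching $u_i$'s color — this is exactly the first coloring in Example~\ref{exa:prisms} (the $GP(n,1)$ coloring). \textbf{Case 2: some rung is not monochromatic.} Then I claim no rung is monochromatic: if $u_iw_i$ is a bichromatic rung but $u_{i+1}w_{i+1}$ is monochromatic, work out the colors around indices $i$ and $i+1$ from the "one own-color neighbor" rule and the matching property, and derive a contradiction (this propagation argument is the crux). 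Once all rungs are bichromatic, for each $i$ the own-color neighbor of $u_i$ lies on the $u$-cycle, so each $u_i$ has exactly one monochromatic cycle-edge; the monochromatic edges on the $u$-cycle therefore form a perfect matching of $C_n$, forcing $n$ even, and since the coloring is balanced at $u_i$ the two cycle-neighbors of $u_i$ get opposite colors, so consecutive monochromatic edges on the $u$-cycle alternate RR/BB — this forces $n\equiv 0\pmod 4$ and gives the $C_n$-coloring from \cite[Theorem~2]{FM24}. Because rungs are bichromatic, the $w$-cycle gets the complementary coloring, which is precisely the second coloring in Example~\ref{exa:prisms}.

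The step I expect to be the main obstacle is the propagation argument in Case 2 showing that a single bichromatic rung forces all rungs bichromatic (equivalently, that a bichromatic rung cannot sit adjacent to a monochromatic rung). The bookkeeping is similar in flavor to the observations~(\ref{a-both})--(\ref{equa-no-consec}) used in the proof of Theorem~\ref{thm:GP(n,d) not}, so I would set up a small table of the four vertices $u_i,u_{i+1},w_i,w_{i+1}$, enumerate the colorings of the bichromatic rung $u_iw_i$ and the monochromatic rung $u_{i+1}w_{i+1}$, and in each subcase apply the balance condition at $u_{i+1}$ and $w_{i+1}$ to reach a contradiction. Everything else — the alternation arguments, the divisibility conclusions $n$ even and $n\equiv 0\pmod 4$, and matching the resulting colorings to Example~\ref{exa:prisms} — is routine once the matching/parity facts from Corollary~\ref{3-reg-cor} are in hand.
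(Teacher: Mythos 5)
Your plan is correct and follows essentially the same route as the paper: invoke Corollary~\ref{3-reg-cor} to get the perfect matching of monochromatic edges, split on whether some rung $u_iw_i$ is monochromatic, propagate that case to force all rungs monochromatic with alternating cycles, and otherwise deduce the RRBB pattern on each cycle forcing $n\equiv 0\pmod 4$. The propagation step you flag as the crux is in fact immediate (a monochromatic red rung forces all four adjacent cycle-neighbors blue, hence the neighboring rungs monochromatic), and your alternation argument in the all-bichromatic case is just a local rephrasing of the paper's count of $|RR|=n/2$ edges split as at most $n/4$ per cycle.
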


\begin{proof} 

We will show that these are the only CNB-colorings of $Y_n$. 
Since the degree of each vertex in $Y_n$ is 3, by Corollary~\ref{3-reg-cor}, $|V(Y_n)|\equiv \ 0 \pmod  4$, and, since $|V(Y_n)|=2n$, then $n$ must be even. Also by Corollary~\ref{3-reg-cor}, $|R|=|B|=n$, $|RB|=2n$, and $|RR|=|BB|=\frac{n}{2}$, and the monochromatic edges form a matching of every vertex in $Y_n$. Suppose there is a edge with both endpoints colored red between two vertices $u_1$ and $u_2$ on different copies of $C_n$. Then the neighbors of $u_1$ and $u_2$ on the copies of $C_n$ are all blue. By a similar argument for an edge between the two copies of $C_n$ with both endpoints colored blue, their neighbors on the $C_n$'s are red. By induction, every edge between the $C_n$'s is monochromatic and the colors alternate on each $C_n$, as described in the first paragraph of Example~\ref{exa:prisms}. 

In any other coloring, there are no monochromatic edges between the copies of $C_n$. In a particular $C_n$, there are at most $\frac{n}{4}$ edges with both endpoints colored red, since each such edge alternates with an edge with both endpoints colored blue on $C_n$. Because there are $\frac{n}{2}$ edges with both endpoints colored red in total, there must be $\frac{n}{4}$ on each $C_n$. Thus, $n\equiv \ 0 \pmod  4$, and the coloring is the CNB-coloring described in the second paragraph of Example~\ref{exa:prisms}. 
\end{proof}

We next consider products of CNBC graphs. The next theorem is a positive result. 

\begin{thm} \label{cnbc-nbc-nbc}
 If $G$ is a CNBC graph, then $G \squarespace K_2$ is an NBC graph.
\end{thm}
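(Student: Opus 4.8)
The plan is to exhibit an explicit NB-coloring of $G\squarespace K_2$ directly from a given CNB-coloring of $G$. Let $(R_G,B_G)$ be a CNB-coloring of $G$, and label the two vertices of $K_2$ as $0$ and $1$. Each vertex of $G\squarespace K_2$ has the form $(g,i)$ with $g\in V(G)$ and $i\in\{0,1\}$, and its neighbors are the vertices $(g',i)$ with $g'\in N_G(g)$ together with the single vertex $(g,1-i)$ from the edge of the second type. The idea is to color the copy indexed by $0$ using $(R_G,B_G)$ and the copy indexed by $1$ using the reversed coloring $(B_G,R_G)$; that is, declare $(g,0)$ red iff $g\in R_G$, and $(g,1)$ red iff $g\in B_G$.

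First I would verify the open-neighborhood balance at an arbitrary vertex $(g,0)$; the argument for $(g,1)$ is symmetric. Suppose without loss of generality $g\in R_G$, so in $G$ we have $|N_G[g]\cap R_G|=|N_G[g]\cap B_G|$; since $g$ itself is red, this gives $|N_G(g)\cap R_G| = |N_G(g)\cap B_G| - 1$, say $|N_G(g)\cap B_G| = k$ and $|N_G(g)\cap R_G| = k-1$. Among the neighbors of $(g,0)$ of the first type, namely $\{(g',0):g'\in N_G(g)\}$, the colors match those in $G$, so there are $k-1$ red and $k$ blue. The one remaining neighbor is $(g,1)$, which is red because $g\in R_G$ corresponds to the reversed coloring on copy $1$. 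Hence $(g,0)$ has $k-1+1 = k$ red neighbors and $k$ blue neighbors, so its open neighborhood is balanced. Running the symmetric computation for $g\in B_G$ and for vertices in copy $1$ completes the check, and therefore $(R,B)$ is an NB-coloring of $G\squarespace K_2$, which is thus an NBC graph.

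There is essentially no serious obstacle here: the only subtlety is getting the bookkeeping of the $\pm 1$ right, i.e.\ recognizing that the deficit of one in the open neighborhood of a red vertex of $G$ is exactly compensated by the single cross-edge to the other copy, which the reversed coloring arranges to be of the correct color. One could alternatively phrase this as a special case of the general principle behind Theorem~\ref{cnbc-box-nbc}, reading the roles of the two factors with care, but the direct two-line coloring argument above is cleanest. It is worth noting that the hypothesis that $G$ is a CNBC graph (hence every vertex of $G$ has odd degree, by Observation~\ref{obs:odddegree}) makes every vertex of $G\squarespace K_2$ of even degree, consistent with Observation~\ref{obs:odddegree} for NBC graphs, so there is no parity obstruction to the conclusion.
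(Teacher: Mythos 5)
Your construction as stated does not work, and your verification silently contradicts it. You declare that copy $1$ receives the \emph{reversed} coloring, i.e.\ $(g,1)$ is red iff $g\in B_G$. Under that rule, for $g\in R_G$ the cross-neighbor $(g,1)$ of $(g,0)$ is \emph{blue}, not red as you assert in the verification. With your numbers ($k-1$ red and $k$ blue among the neighbors $(g',0)$), the open neighborhood of $(g,0)$ would then contain $k-1$ red and $k+1$ blue vertices, which is unbalanced. The point is that a red vertex of a CNB-colored graph has a deficit of one red vertex in its \emph{open} neighborhood, so the single cross-edge must supply a neighbor of the \emph{same} color as $(g,0)$; the reversed coloring supplies the opposite color and makes the imbalance worse.

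The fix is simply to use the identical coloring on both copies — color $(g,i)$ red iff $g\in R_G$ for both $i\in\{0,1\}$ — which is exactly what the paper does. Then $(g,1)$ has the same color as $(g,0)$, your count $k-1+1=k$ red versus $k$ blue goes through, and the rest of your argument (the symmetric cases and the parity remark at the end) is fine. So the error is a sign flip in the definition of the coloring rather than a missing idea, but as written the proof is internally inconsistent and the coloring it defines is not an NB-coloring.
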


\begin{proof}
Let $V(K_2) = \{h_0,h_1\}$, thus the vertices of $G \squarespace K_2$ have the form $(g,h_i)$ for $i \in \{0,1\}$ and $g \in V(G)$.  Let $(R,B)$ be a CNB-coloring of $G$.  We define  a coloring of $G \squarespace K_2$ as follows:  color $(g,h_i)$ red if $g \in R$ and blue if $g \in B$.  Consider vertex $(g,h_i)$.  There are an equal number of red and blue vertices in the set consisting of $(g,h_i)$ together with its neighbors of the form $(g',h_i)$ since $(R,B)$ is a CNB-coloring of $G$. In $G \squarespace K_2$ vertex $(g,h_i)$ has  one additional neighbor $(g,h_{1-i})$, which has the same color as $(g,h_i)$.  Thus, the open neighborhood of $(g,h_i)$ is balanced and our coloring is an NB-coloring of $G \squarespace K_2$.
\end{proof}

\begin{cor}
    For $n\geq 1$, the $n$-dimensional hypercube is an NBC graph if $n$ is even, and a CNBC graph if $n$ is odd. 
\end{cor}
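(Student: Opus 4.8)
The plan is to induct on $n$, exploiting the recursive structure $Q_n = Q_{n-1}\squarespace K_2$ together with the two product theorems just proved. The base case is $Q_1 = K_2$, which is a CNBC graph; since $n=1$ is odd, the statement holds here. It is also worth noting $K_2$ itself will serve as the CNBC factor at every step where the dimension increases from even to odd.

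For the inductive step, assume the claim holds for all dimensions below $n$. If $n$ is even, then $n-1$ is odd, so by the inductive hypothesis $Q_{n-1}$ is a CNBC graph; applying Theorem~\ref{cnbc-nbc-nbc} with $G = Q_{n-1}$ shows that $Q_{n-1}\squarespace K_2 = Q_n$ is an NBC graph. If $n$ is odd with $n\ge 3$, then $n-1$ is even, so by the inductive hypothesis $Q_{n-1}$ is an NBC graph; since $K_2$ is a CNBC graph, Theorem~\ref{cnbc-box-nbc} (taking the CNBC factor to be $K_2$ and the NBC factor to be $Q_{n-1}$) gives that $K_2\squarespace Q_{n-1} = Q_n$ is a CNBC graph. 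This completes the induction.

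There is no real obstacle here: the content is entirely carried by the two Cartesian-product theorems, and the proof just alternates between them as the parity of $n$ changes. The only point demanding a moment's attention is keeping straight which factor plays the CNBC role in each application of Theorem~\ref{cnbc-box-nbc} — namely $K_2$, with the lower-dimensional hypercube supplying the NBC factor — and recording the elementary fact that $Q_n \cong Q_{n-1}\squarespace K_2$ and $Q_1 \cong K_2$.
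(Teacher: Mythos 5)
Your proof is correct and follows essentially the same route as the paper: induct on $n$ via $Q_n \cong Q_{n-1}\squarespace K_2$, applying Theorem~\ref{cnbc-nbc-nbc} when passing from odd to even dimension and Theorem~\ref{cnbc-box-nbc} (with $K_2$ as the CNBC factor) when passing from even to odd. The paper merely states the first two steps and says the rest follows by induction; you have written out the same induction in full.
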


\begin{proof}
Let $Q_n$ be the $n$-dimensional hypercube. Note that $Q_1=K_2$ and $K_2$ is a CNBC graph. Thus, by Theorem~\ref{cnbc-nbc-nbc}, $Q_1 \squarespace Q_1=Q_2$ is an NBC graph. By Theorem~\ref{cnbc-box-nbc}, $Q_2 \squarespace K_2=Q_3$ is a CNBC graph. The statement follows by induction on $n$. 
\end{proof}

In Theorem~\ref{cnbc-nbc-nbc}, we show that the Cartesian product of any CNBC graph $G$ and the CNBC graph $K_2$ is an NBC graph. However, the Cartesian product of two CNBC graphs need not be an NBC graph, as we show in the next example.

\begin{example} \rm In this example, we show that while 
  $K_4$ is a CNBC graph, the Cartesian product $K_4 \squarespace K_4 $ is not an NBC graph. 
  Let $G = K_4 \squarespace K_4 $ and label its vertices  by $x_{i,j}$ for $ 1 \le i,j \le 4$ so that for each $k: 1 \le k \le 4$, the vertices $x_{k,1}, x_{k,2}, x_{k,3}, x_{k,4} $ induce a $K_4$ and the vertices $x_{1,k},  x_{2,k},  x_{3,k},  x_{4,k}$ induce a $K_4$.  Suppose for a contradiction that $(R,B)$ is an NB-coloring of $G$. 
Each vertex of $G$ has degree $6$, so each has $3$ red neighbors and $3$ blue neighbors.  Consider the colors of the vertices in the $K_4$ induced by 
	$x_{1,1}, x_{1,2}, x_{1,3}, x_{1,4} $.

\smallskip
	\noindent
	Case 1:  All four of these vertices are the same color.  Without loss of generality  suppose they are all blue.  Each one has three blue neighbors, so the remaining
three neighbors must be red.  Thus $|B| = 4$ and $|R| = 12$, contradicting Theorem~\ref{thm:blue equals red-nb}.
 
	\smallskip
		\noindent
	Case 2:   Three of these vertices are the same color and one is the opposite color.  Without loss of generality we assume  $x_{1,1}, x_{1,2}, x_{1,3}$ are blue and $x_{1,4} $ is red.  Now $x_{1,4} $ has three blue neighbors, so its remaining neighbors $x_{2,4},  x_{3,4},  x_{4,4}$ are red.  Now each of  $x_{2,4},  x_{3,4},  x_{4,4}$ has three red neighbors, so their remaining three neighbors must be blue.  Thus  $|B| = 12$ and $|R| = 4$, again contradicting Theorem~\ref{thm:blue equals red-nb}.
	
	\smallskip
	
		\noindent
	Case 3:   Two of these vertices are red and two are blue.  Without loss of generality assume that $x_{1,1}$ and  $x_{1,2}$ are red and  $x_{1,3}$ and  $x_{1,4}$ are blue.  So $x_{1,1}$ has one red and two blue neighbors, so the remaining  neighbors must be one blue and two red.  Let $x_{k,1}$ be the blue neighbor.  So $x_{k,1}$ has three red neighbors of the form $x_{j,1}$, and thus its remaining three neighbors are blue, so $x_{k,1}$,  $x_{k,2}$, $x_{k,3}$,  and  $x_{k,4}$ are all blue.  Thus, this is the same contradiction as in Case 1.
\end{example}

Recall that the \emph{strong product} of graphs $G$ and $H$, denoted, $G \boxtimes H$, is the 
graph with vertex set $V(G \boxtimes  H) = \{(g,h): g \in V(G), h \in V(H)\}$ where two 
vertices $(g,h)$ and $(g',h')$ are adjacent in $G \boxtimes H$ if and only if 
(1) $g=g'$ and $hh' \in E(H)$ or 
(2) $h=h'$ and $gg' \in E(G)$ or 
(3) $gg'\in E(G)$ and $hh'\in E(H)$. We call these the first, second and third types of edges.

\begin{thm}\label{thm:strong}
    If $G$ is a CNBC graph, and $H$ be any graph, then $G \boxtimes H$ is a CNBC graph. 
\end{thm}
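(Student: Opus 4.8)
The plan is to exploit the fact that in a strong product the closed neighborhood factors as a product. Concretely, for any $(g,h)\in V(G\boxtimes H)$ one has
\[
N_{G\boxtimes H}[(g,h)] = N_G[g]\times N_H[h].
\]
This follows directly from the definition of the strong product: $(g',h')$ lies in the closed neighborhood of $(g,h)$ if and only if $g'\in N_G[g]$ and $h'\in N_H[h]$, since the vertex $(g,h)$ itself together with the three edge types accounts for exactly the four combinations in which neither coordinate moves, only the first moves, only the second moves, or both move to a neighbor. I would state and verify this identity first.

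Next, fix a CNB-coloring $(R_G,B_G)$ of $G$ and define a coloring of $G\boxtimes H$ that ignores the second coordinate: color $(g,h)$ red if $g\in R_G$ and blue if $g\in B_G$. For any vertex $(g,h)$, the number of red vertices in $N_{G\boxtimes H}[(g,h)]=N_G[g]\times N_H[h]$ equals $|N_G[g]\cap R_G|\cdot|N_H[h]|$, and the number of blue vertices equals $|N_G[g]\cap B_G|\cdot|N_H[h]|$. Since $(R_G,B_G)$ is a CNB-coloring of $G$, we have $|N_G[g]\cap R_G|=|N_G[g]\cap B_G|$, so the two counts agree for every $(g,h)$. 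Hence every closed neighborhood in $G\boxtimes H$ is balanced, and $G\boxtimes H$ is a CNBC graph.

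There is essentially no obstacle here; the only point needing care is the verification of the closed-neighborhood identity, and in particular noticing that it is the \emph{closed} neighborhood (not the open one) that factors cleanly in the strong product — this is precisely why the result holds for CNBC graphs, mirroring the way Theorem~\ref{cnbc-box-nbc} pairs the Cartesian product with NBC graphs. One could equivalently phrase the argument without the product identity, by splitting the neighbors of $(g,h)$ into the three edge types (plus $(g,h)$ itself) and checking balance type by type, but the factorization makes the bookkeeping immediate and also shows that $H$ plays no role beyond contributing the common multiplicative factor $|N_H[h]|$.
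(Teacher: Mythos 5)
Your proof is correct and follows essentially the same route as the paper: both arguments reduce balance of $N_{G\boxtimes H}[(g,h)]$ to balance of closed $G$-neighborhoods in each layer $\{(g',h') : g'\in V(G)\}$ with $h'\in N_H[h]$, your product identity $N_{G\boxtimes H}[(g,h)]=N_G[g]\times N_H[h]$ being a compact packaging of that decomposition. The only (inessential) difference is that the paper allows an independently chosen CNB-coloring of $G$ on each layer, whereas you fix a single one and copy it everywhere; both suffice for the theorem.
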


\begin{proof}
Let $W_h=\{(g,h)\;|\; g\in V(G)\}$. Then $W_h$ induces a copy of $G$ as a subgraph of $G\boxtimes H$. For each $h\in V(H)$, choose a CNB-coloring of $W_h$. We will show that this coloring is a CNB-coloring of $G\boxtimes H$. The neighbors of $(g,h)$ of the second type are all in $W_h$, and $W_h$ has a CNB-coloring, so $N_{W_h}[(g,h)]$ is balanced. For each $h'\in V(H)$ such that $hh'\in E(H)$, the neighbors of $(g,h)$ in $W_{h'}$ are exactly those in  $N_{W_{h'}}[(g',h')]$, of the first and third types. Since $W_{h'}$ is colored with a CNB-coloring, $N_{W_{h'}}[(g',h')]$ contains an equal number of red and blue vertices. This is true for every $h'$ such that $hh'\in E(H)$, and hence $N_{G\boxtimes H}[(g,h)]$ is balanced. 
\end{proof}

The hypothesis that $G$ is a CNBC graph is necessary, since if $G$ is a graph that is not a CNBC graph, such as $K_{1,3}$, then $G\boxtimes K_1$ is isomorphic to $G$ and is not a CNBC graph. Using Theorem~\ref{thm:strong} and $G=K_2$ gives a proof of the second part of Theorem~\ref{induced-subgraph}, that every graph $H$ is the induced subgraph of a CNBC graph. In the proof of Theorem~\ref{induced-subgraph}, we chose a particular coloring. In the proof of Theorem~\ref{thm:strong}, any coloring such that for each $h\in V(H)$, $(k_0,h)$ and $(k_1, h)$ are opposite colors is a CNB-coloring. 

\section{Conclusion}  \label{openq}

We conclude with some open questions, and acknowledgments. 

\begin{question} \label{ques:circ} \rm In Theorem~\ref{thm:circulant-n-2-quint}, we characterzied the quintic circulants whose number of vertices is congruent to $2 \pmod 4$, and in Theorems~\ref{thm:circulant-n-0} and \ref{thm:circulant-n-0-8}, we constructed infinite families of quintic CNBC circulants whose number of vertices is congruent to $0\pmod 4$. Assuming that not both $d_1$ and $d_2$ are even, the open cases are when a circulant $C_n(d_1, d_2, \frac{n}{2})$ has $n\equiv 0 \pmod 8$ vertices and $d_1\equiv 0 \pmod 4$ and $d_2$ is odd, or when $n\equiv 4 \pmod 8$  and either $d_1, d_2$ are odd and $d_1+d_2\neq \frac{n}{2}$, or one is odd and the other is even. 
Can the quintic CNBC circulants be completely characterized? 
\end{question}

\begin{question} \rm  
Recall that the \emph{direct product} of graphs $G$ and $H$, denoted, $G\times H$, is the 
graph with vertex set $V(G \times  H) = \{(g,h): g \in V(G), h \in V(H)\}$ where two 
vertices $(g,h)$ and $(g',h')$ are adjacent in $G\times H$ if and only if 
$gg'\in E(G)$ and $hh'\in E(H)$. 
 What can be said about $G\times H$ when $G$ or $H$ is a CNBC graph?  
\end{question}

\begin{question} \rm Is the problem of determining whether a graph is a CNBC graph an NP-complete problem? The analogous question for NBC graph is discussed in \cite{A24}. 
    
\end{question}

We gratefully acknowledge the American Institute of Mathematics (AIM) and their support of research collaboration. We started our project at the AIM workshop, {\it Graph Theory: structural properties, labelings, and connections to applications}, held July 22-26, 2024. We thank the workshop organizers, and we thank Novi Bong and Rinovia Simanjuntak for their participation in initiating the project.


\begin{thebibliography}{99}

\bibitem{A24} S.~Asaeedi, NP-completeness of neighborhood balanced colorings, preprint, https://doi.org/10.48550/arXiv.2407.19793

\bibitem{BDS72} N.~L.~Biggs, R. M. Damerell and D. A. Sands, Recursive families of graphs, \emph{J. Comb. Theory Ser. B} {\bf 12} (1972) 123-131.

\bibitem{C10} D.~Chakrabarti, Graph Mining, in Encyclopedia of Machine Learning, edited by Sammut and Webb, Springer, Boston MA (2010) 469-471.

\vspace{-.04in}

https://doi.org/10.1007/978-0-387-30164-8\_350.

\bibitem{DMWCL23} Y.~Dong, J.~Ma, S.~Wang, C.~Chen and J.~Li, Fairness in Graph Mining: A Survey, \emph{IEEE Trans. Knowl. Data Eng.} {\bf 35(10)} (2023) 10583-10602. 

\bibitem{FM24} B.~Freyberg and A.~Marr, Neighborhood balanced colorings of graphs,
\emph{Graphs Combin.} {\bf 40} (2024) Article no. 41. https://doi.org/10.1007/s00373-024-02766-9

\bibitem{LLL24} T.~Liu, P.~Li and H.~Liu, Dual-node and edge-aware graph partitions, preprint, https://doi.org/10.48550/arXiv.2306.10123

\bibitem{MMOS24} T.~Madaras, D.~Matisov\'a, A.~Onderko, and Z.~\v{S}\'aro\v{s}iov\'a,
Proper interval vertex colorings of graphs, 
\emph{Appl. Math. Comput.} {\bf 477} (2024) Paper No. 128813, 11 pp.

\bibitem{MS24} M.~Minyard and M.~Sepanski, Neighborhood balanced 3-colorings, preprint, 
https://doi.org/10.48550/arXiv.2410.05422

\bibitem{PS22} M.~Petru\v{s}evski and R.~\v{S}krekovski,
Colorings with neighborhood parity condition, 
\emph{Discrete Appl. Math.} {\bf 321} (2022) 385-391.

\bibitem{S86} E.~R.~Scheinerman, On the structure of hereditary classes of graphs,
\emph{J. Graph Theory} {\bf 10(4)} (1986) 545-551.

\bibitem{W69} M.~E.~Watkins, A theorem on Tait colorings with an application to the generalized Petersen graphs,\emph{ J. Comb. Theory} {\bf 6(2)} (1969) 152-164.

\bibitem{West} D.~B.~West, Introduction to Graph Theory, second edition, Prentice Hall, Upper Saddle River, NJ (2001). 
\end{thebibliography}
\end{document}